 \theoremstyle{plain}
 \newtheorem{thm}{Theorem}[section]
 \newtheorem{cor}[thm]{Corollary}
 \newtheorem{lem}[thm]{Lemma}
 \newtheorem{sublem}[thm]{Sublemma}
 \newtheorem{propo}[thm]{Proposition}
 \theoremstyle {definition}
 \theoremstyle{remark}
 \newtheorem{rem}[thm]{Remark}
\def\polhk#1{\setbox0=\hbox{#1}{\ooalign{\hidewidth
    \lower1.5ex\hbox{`}\hidewidth\crcr\unhbox0}}}
\renewcommand{\Im}{\operatorname{Im}} 
\renewcommand{\Re}{\operatorname{Re}}
\newcommand{\Dom}{\operatorname{Dom}\,}
\newcommand{\diam}{\operatorname{diam}\,}
\newcommand{\area}{\operatorname{area}}
\newcommand{\daron}{\operatorname{int}\,}
\newcommand{\ex}{\operatorname{\mathbb{E}xp}}
\newcommand{\eps}{\varepsilon}
\newcommand{\vtet}{\vartheta}
\newcommand{\irr}{\operatorname{HT}_N}
\newcommand{\pc}{\mathcal{PC}}
\newcommand{\alf}{\alpha}
\newcommand{\ff}{\mathcal{I}\hspace{-1pt}\mathcal{S}}
\newcommand{\QIS}{\mathcal{Q}\hspace{-1pt}\mathcal{I}\hspace{-1pt}\mathcal{S}}
\newcommand{\orb}{\mathcal{O}}
\newcommand{\RS}{\mathbb{\hat{C}}}
\newcommand{\vfi}{\varphi}
\newcommand{\cc}{\mathbb{C}}
\newcommand{\ra}{\rightarrow}
\newcommand{\ea}{e^{2 \pi \mathbf{i} \alpha}}
\newcommand{\p}{\mathcal{P}}
\newcommand{\C}{\mathcal{C}}
\newcommand{\Csh}{\mathcal{C}^\sharp}
\newcommand{\rr}{\mathcal{R}}
\newcommand{\D}{\mathcal{D}}
\newcommand{\G}{\mathcal{G}}
\newcommand{\cp}{\textup{cp}}
\newcommand{\cv}{\textup{cv}}
\newcommand{\Bi}{\textbf{i}}
\newcommand{\Bk}{\boldsymbol{k}}
\newcommand{\co}[1]{^{\circ {#1}}}
\newcommand{\dhyp}{\operatorname{d_{hyp}}}
\begin{document}

\title[orbits of quadratic polynomials with a neutral fixed point]{Typical orbits of quadratic polynomials
with a neutral fixed point: Non-Brjuno type}
%\alttitle{Orbites typiques des polyn\^omes quadratiques avec un point fixe neutre: type non-Brjuno}

\author{Davoud Cheraghi}

\address {Department of Mathematics, Imperial College London, UK} 
\email{d.cheraghi@Imperial.ac.uk}

\subjclass{Primary 37F50; Secondary 46T25, 37F25}
%\date{\today}

\begin{abstract}
We investigate the quantitative and analytic aspects of the near-parabolic renormalization scheme introduced by 
Inou and Shishikura in 2006.
These provide techniques to study the dynamics of some holomorphic maps of the form 
$f(z) = \ea z + \mathcal{O}(z^2)$, including the quadratic polynomials $\ea z+z^2$, for some irrational values of 
$\alpha$. 
The main results of the paper concern fine-scale features of the measure-theoretic attractors of these maps, 
and their dependence on the data.
As a bi-product, we establish an optimal upper bound on the size of the maximal linearization domain
in terms of the Siegel-Brjuno-Yoccoz series of~$\alpha$.  
%This implies the optimality of the Siegel-Brjuno arithmetic condition in the problem of 
%small divisors (linearizability). 
%In particular, in the special case of quadratic polynomials, we obtain a topological description of the orbits of 
%typical points,  
%a fine-scale feature of the post-critical set, as well as a semi-continuity property of the post-critical set. 
%We study typical \textit{orbits} of complex quadratic polynomials $\ea z+z^2$, 
%with $\alf$ of \textit{non-Brjuno} and \textit{high return} type. 
%In particular, we show that the orbit of Lebesgue almost every point in the Julia sets of these maps 
%accumulates at the $0$ fixed point.
%The closure of the critical orbit, which is the\textit{ measure theoretic attractor} of the map, has zero area.
%As a consequence of this study, we introduce a family of rational maps in which the Brjuno condition 
%is optimal for the \textit{linearizability}.
\end{abstract}
%\begin{altabstract}
%\end{altabstract}
\maketitle
\renewcommand{\thethm}{\Alph{thm}}
\section{Introduction}
\subsection{Neutral fixed points}
Let $f$ be a holomorphic map of the form 
\[f(z)=e^{2\pi \Bi \alpha}z+ a_2z^2+a_3 z^3+ \dots,\]
defined on a neighborhood of $0 \in \mathbb{C}$, and $\alf \in \mathbb{R}\setminus \mathbb{Q}$.  
Asymptotically near $0$, the orbits are governed by the rotation of angle $\alf$ and are highly recurrent. 
Away from zero, the influence of non-linearity increases, eventually reaching the scale where the behavior is 
governed by the global topological structure of the map.
For systems with unstable behavior near zero, the transition from local to global and back may occur 
infinitely often. 
This creates a delicate interplay among the arithmetic nature of $\alpha$, the non-linearities 
of the large iterates of $f$, and the global covering structure of the iterates of $f$.    
In this paper we study this problem. 

The ideal scenario is when the map $f$ is conformally conjugate to the linear map $w \mapsto \ea \cdot w$ 
on a neighborhood of $0$. 
To discuss this further, let us denote the best rational approximatants of $\alpha$ with $p_n/q_n$, $n\geq 1$. 
By a landmark result of Siegel and Brjuno \cite{Sie42,brj71}\nocite{Cre38}, if the series
\[\mathcal{B}(\alpha)= \textstyle\sum_{n=1}^{\infty} q_n^{-1} \log q_{n+1}\]
is finite, then near $0$ the map $f$ is confomally conjugate to a linear map. \nocite{Pe11,ChFa96,Her87,Pf17}
When $f$ is linearizable near $0$, the maximal domain on which the conjugacy exists is called 
the \textit{Siegel disk} of $f$. 
The geometry of the Siegel disks as well as the dynamics of $f$ near their boundaries have been 
the subject of extensive studies over the last few decades. 
These involve a wide range of methods, with consequences often depending on the arithmetic nature of $\alpha$. 
See for instance, \cite{Her85,Mc04,GrJo02,GrSw03,PZ04,ABC04,BC07,Ya08,Zak10,Zha11,Che11}. 
We note that for almost every $\alf \in \mathbb{R}\setminus \mathbb{Q}$, $\mathcal{B}(\alpha)< \infty$, 
while for generic choice of $\alf \in \mathbb{R}\setminus \mathbb{Q}$, $\mathcal{B}(\alpha)=\infty$. 
 
%These involve a wide range of methods, with consequences often depending on the arithmetic nature of $\alpha$,  
%for instance, using critical circle maps \cite{Her85}, commuting-pair renormalization \cite{Mc04},  
%cylinder renormalization \cite{Ya08}, quasi-conformal surgery \cite{PZ04,Zak10,Zha11},
%distortion estimates of conformal maps \cite{GrJo02,GrSw03}, parabolic implosions \cite{ABC04,BC07}, 
%and inverse-renormalization method \cite{Che11}. 
%In spite of these, a number of fundamental problems have proved difficult.  
 
On the other hand, by a celebrated result of Yoccoz \cite{Yoc95}, if $\mathcal{B}(\alpha)=\infty$, the polynomial 
\[P_\alf(z):= \ea z+z^2\]
is not linearizable at zero. 
Although this optimality result has been further extended by similar ideas to special families of maps 
\cite{PM93,Ge01,Ok04}, it remains widely open in families of polynomials and rational maps.
Also, due to a non dynamical step in those arguments, very little has been understood about the local dynamics of 
non-linearizable maps. 
In \cite{PM97}, Perez-Marco constructs non-trivial local invariant compact sets containing $0$ for 
non-linearizable maps. 
But the necessary control on the geometry of these objects and the dynamics of the map on them has remained 
out of reach.

In 2006, Inou and Shishikura introduced a renormalization scheme that provides a powerful tool to study 
the dynamics of near parabolic maps, \cite{IS06}. 
This involves an infinite-dimensional class of maps $\mathcal{F}$, and a nonlinear operator 
$\rr : \mathcal{F} \to \mathcal{F}$, called near-parabolic renormalization. 
Every map in $\mathcal{F}$ is defined on a Jordan neighborhood of $0$, has a neutral fixed point at $0$, 
and a unique critical point of local degree two in its domain of definition.
Given $f\in \mathcal{F}$, $\rr(f)$ is defined as a sophisticated notion of the return map of $f$ about $0$ 
to a region in the domain of $f$, viewed in a certain canonically defined coordinate on that region.
Precise definitions appear in Section~\ref{sec:prelim}.    

In this paper we carry out a quantitative analysis of the near-parabolic renormalization scheme. 
This involves proving a number of foundational results on the combinatorial and analytic aspects of the scheme. 
In particular, we have slightly modified the definition of renormalization to make it suitable for applications.

Successive iterates of $\rr$ at some $f\in \mathcal{F}$ produces a renormalization tower; a sequence of maps 
$\rr\co{j}(f)$ which are related by the changes of coordinates. 
The general theme in theories of renormalization is that large iterates of $f$ often break down into compositions of a 
small number of the changes of coordinates and the maps $\rr\co{j}(f)$. 
However, due to the ``semi-local'' nature of near-parabolic renormalization, there are a number of issues which require 
careful consideration.

The maps in $\mathcal{F}$ have a partial covering structure, involving a branched covering of local degree two. 
The change of coordinate in the definition of renormalization also has a partial covering structure with a branch point. 
We prove some detailed orbit relations on the renormalization tower, relating the combinatorial aspects of the orbits 
of $f$ to the ones of $\rr\co{j}(f)$, for $j\geq 1$. 

The change of coordinates in the definition of renormalization involves transcendental mappings with highly 
distorting nature. 
Substantial part of the paper (Section~\ref{sec:perturbed}) is devoted to proving uniform (distortion) estimates on these 
maps and their dependence on $\alpha$.
To this end we have introduced a new approach to compare the changes of coordinates to some model maps 
using quasi-conformal mappings.
%In particular, we establish a uniform contraction of the changes of coordinates along the tower with respect to 
%some hyperbolic metrics.  
%This is mostly independent from the rest of the paper and is presented in Section~\ref{sec:perturbed}. 

The interplay between the arithmetic of  $\alpha$ and the non-linearities of the iterates of $f$ is manifested in 
geometric aspects of the renormalization tower. 
We present a systematic approach to employing the above combinatorial and analytic tools 
to study the dynamics of the maps in $\mathcal{F}$. 

\subsection{Statements of the results}
Define the set of irrational numbers 
\[\irr:=\{[0;a_1,a_2,\dots]\in \mathbb{R}\mid \forall i\geq 1,  a_i\geq N\},\] 
where $N\in \mathbb{N}$ and $[0;a_1,a_2,\dots]=1/(a_1+1/(a_2+1/(\dots)))$ denotes the continued fraction expansion. 
For technical reasons, in this paper we require $\alf$ to be in $\irr$, for some fixed constant $N \in \mathbb{N}$. 
\footnote{This is also required in the near-parabolic renormalization scheme. 
However, it is conjectured that there exists a scheme with similar qualitative features for which $N=1$. 
So, we hope that the arguments presented here will be eventually applied to all irrational rotation numbers.}

The class of maps $\mathcal{F}$ fibers over $\irr$ as 
\[\mathcal{F}= \cup_{\alpha\in \irr} \mathcal{F}_\alpha, \quad  \mathcal{F}_\alpha= \{P_\alpha\} \cup \ff_\alpha, \]
where $\ff_\alpha$ is the Inou-Shishikura class of maps defined precisely in Section~\ref{Inou-Shishikura-class}. 
For $f \in \ff_\alpha$, $f'(0)=\ea$. 
We note that for each $\alpha \in \irr$ there are polynomials and rational maps of arbitrarily large degree 
whose restriction to some neighborhood of $0$ belong to $\ff_\alpha$. 
Also, $P_\alf \notin \ff_\alpha$, but $\mathcal{R}(P_\alpha)$ is defined and belongs to 
$\ff_{1/\alpha} \subset \mathcal{F}$. 

%n element $f\in \QIS_\alpha$ has asymptotic rotation $f'(0)=\ea$. 
%A map in $\QIS_\alpha$ is either the quadratic polynomial $Q_\alpha(z)=\ea z + (27/16) e^{4\pi \Bi \alpha} z^2$ 
%or belongs to the Inou-Shishikura class of maps $\ff_\alpha$, which is defined precisely in 
%Section~\ref{Inou-Shishikura-class}. 
%We note that there are polynomials and rational maps of arbitrarily large degree whose restriction to some 
%neighborhood of $0$ belong to some $\ff_\alpha$. 
%There are also classes of polynomials and rational maps that have no restriction that belong to any $\ff_\alpha$, 
%but their near-parabolic renormalizations are defined and belong to some $\ff_\alpha$. 
%The map $Q_\alf \notin \ff_\alpha$, but $\mathcal{R}(Q_\alpha)$ is defined and belongs to 
%$\ff_{1/\alpha} \subset \mathcal{F}$. 
%In particular, $\rr\co{j}(P_\alf)$, for $j\geq 1$, are defined and belong to $\mathcal{F}$, provided $\alf\in \irr$. 
%All the theorems stated above apply to all such maps.
%Of course $P_\alpha$ is conjugate to $Q_\alpha$ by a M\"obius transformation. 

By classical results, the post-critical set of a holomorphic map provides key information about the dynamics of that 
map,  in particular, its measurable dynamics. 
%That is, given a rational map $h$, the set of accumulation points of the orbit of Lebesgue almost every point in the 
%Julia set of $h$ is contained in the post-critical set of $h$, provided the Julia set is not the whole Riemann sphere.  
%However, this does not rule out the possibility that the post-critical set is equal to the whole Julia set.
A map $f \in \mathcal{F}$ has a unique critical point in its (restricted) domain of definition, say $\cp_f$. 
The \textit{post-critical} set of $f$ associated to $\cp_f$ is defined as  
\[\pc(f)= \overline{\cup_{i=1}^\infty f\co{i}(\cp_f)}.\]

The main aim of this paper is to describe the geometry of  the post-critical set and the iterates of the 
map near it. 
To this end, we build a decreasing nest of simply connected sets containing $\pc(f)$, denoted by 
$\Omega_0^n$, $n\geq 0$. 
Each $\Omega_0^n$ is formed of about $q_{n+1}+q_n$ (topological) sectors landing at $0$, which are 
ordered by the arithmetic of $\alpha$, and are mapped to one another by the map. 
Roughly speaking, the rotation element leads to a tangential action on each $\Omega_0^n$, 
while the nonlinearity of the map results in a radial action on each $\Omega_0^n$. 
The arithmetic of $\alpha$ and the non-linearities of the large iterates of $f$ characterizes 
the relative geometry of each $\Omega_0^{n+1}$ in $\Omega_0^n$, and the shapes of the sectors in each 
$\Omega_0^n$. 
See Figure~\ref{easy-renormalization}.  

A large $a_n$ in the expansion of $\alpha$ (or some $p_{n+1}/q_{n+1}$ very close to $\alpha$) 
results in $q_n$ ``relatively thick fjords'' in $\Omega_0^n \setminus \Omega_0^{n+1}$.  
By a delicate analysis of the geometry of the renormalization tower we show that the Siegel-Brjuno-Yoccoz 
arithmetic condition corresponds to the tip of the fjords reaching $0$ in the limit. 
%The main result of the paper is the following theorem. 

\begin{thm}\label{pc-area}
For all $\alf\in\irr$ with $\mathcal{B}(\alpha)=\infty$ and every $f\in \mathcal{F}_\alpha$, 
$\pc(f) \setminus \{0\}$ is non-uniformly porous\footnote{A set $E\subseteq \cc$ is called non-uniformly 
porous, if there is $\lambda >0$ satisfying the following property. 
For every $z\in E$ there is a sequence of real numbers $r_n\to 0$ such that each 
ball of radius $r_n$ about $z$ contains a ball of radius $\lambda r_n$ disjoint from $E$.}. 
In particular, it has zero area.
\end{thm}

We establish a uniform contraction principle with respect to certain hyperbolic metrics on the renormalization tower.  
The map $f$ permutes the sectors in each $\Omega_0^n$ according to the rotation of angle $\alpha$. 
These are used to prove that the dynamics of $f$ on $\pc(f)$ is highly recurrent, with the combinatorics of the 
returns given by the rotation of angle $\alpha$. 

\begin{thm}\label{like-rotation-thm}
There are constants $M$ and $\mu < 1$ such that for every $\alf$ in $\irr$ and every $f \in \mathcal{F}_\alpha$, 
on the set $\pc(f)$ we have 
\[\vert f\co{q_n}(z)-z \vert\leq M \mu^n.\]  
\end{thm}

%The geometric convergence in the above theorem holds on the larger sets $\Omega_0^n$, 
%see Proposition~\ref{like-rotation-propo}.

By a general result, the orbit of almost every point in the Julia set accumulates on a subset of the post-critical 
set \cite{Su83,Ly83b}. 
Thus, Theorem~\ref{pc-area} allows us to obtain the following. 

\begin{cor}\label{non-recurrent}
For all $\alf\in\irr$ with $\mathcal{B}(\alpha)=\infty$, the orbit of Lebesgue almost every point in the Julia set of 
$P_\alf$ is \text{non-recurrent}. 
In particular, there is no absolutely continuous invariant probability on the Julia set of $P_\alf$.  
\end{cor}

Let $\Delta(f)$ denote the Siegel disk of $f$ when $f$ is linearizable at $0$, and otherwise, let $\Delta(f)=\{0\}$.
Using the uniform contraction principle along the renormalization tower we show the relation 
$\cap_n \Omega_0^n = \pc(f) \cup \Delta(f)$, see Proposition~\ref{equal-to-PC}. 
This allows us to establish some topological properties of $\pc(f)$. 

\begin{thm}\label{thm:connectivity-Siegel-boundary}
For all $\alf\in \irr$ and all $f \in \mathcal{F}_\alpha $, $\pc(f)$ is a connected set. 
%Moreover, when $\mathcal{B}(\alf)<\infty$, the boundary of the Siegel disk of $f$ at $0$ is contained in $\pc(f)$. 
\end{thm}

%It was already known to Fatou that the boundary of the Siegel disk of any rational map (if it exists!) must 
%be contained in the closure of the orbit of a critical point. \nocite{Ma83}  
%However, having this statement for the larger class $\mathcal{F}$ allows one to transfer other results from the 
%quadratic family to other families of maps. 
%For instance, when $\alpha$ is of bounded type (i.e.\ $\sup a_i < \infty$ in the expansion of $\alpha_0$), 
%by a classical result of Herman \cite{He87}, the boundary of $\Delta(P_\alpha)$ is a quasi-circle containing 
%the critical point. 
%For $\alf \in \irr$, there is a holomorphic motion of the orbit of the critical point over $\ff_\alpha$. 
%By the classical $\lambda$-lemma \cite{Ly83,MSS83}, one concludes that 
%for all $\alf \in \irr$ of bounded type and all $f\in \ff_\alpha$, the boundary of the Siegel disk is a quasi-circle 
%containing the critical point. 

For small perturbations of $\alpha$, %(say within a fundamental interval of some iterate of the Gauss map), 
the sets $\Omega_0^n$, up to some finite level $n$, move continuously as a function of $\alpha$. 
Using $\cap_n \Omega_0^n = \pc(f) \cup \Delta(f)$ we conclude a semi-continuity property of $\pc(f)$. 

\begin{thm}\label{thm:semicontinuous}
Let $f_\alf$, $\alf \in [0,1]$, be a continuous family of maps such that for $\alpha\in \irr$ we have 
$f_\alpha \in \mathcal{F}_\alpha$. 
Then, for every $\alpha_0 \in \irr$ and every $\epsilon>0$ there is $\delta>0$ such that for every $\alpha\in \irr$ with 
$|\alpha_0 -\alpha|< \delta$, 
$\pc(f_\alpha) \cup \Delta(f_{\alf})$ is contained in $\epsilon$-neighborhood of 
$\pc(f_{\alpha_0}) \cup \Delta(f_{\alf_0})$. 
\end{thm} 

When $\mathcal{B}(\alpha_0) < \infty$, $\pc(f_\alpha)$ may not depend continuously on $\alpha$ at $\alpha_0$, 
due to nearby non-linearizable maps.
The above theorem states that the post-critical set of the perturbed map can only explode into the 
Siegel disk of the limiting map. 
On the other hand, when $\mathcal{B}(\alpha_0)=\infty$, $\Delta(f_{\alf_0})=\{0\} \subset \pc(f_{\alpha_0})$, and 
the above theorem boils down to the continuity of $\pc(f_{\alpha})$ at $\alpha_0$. 

Theorem~\ref{thm:semicontinuous} plays a key role in constructions of examples based on successive 
small perturbations. 
A special case of the above theorem for $P_\alpha$ and when $\alpha_0$ is of bounded type 
was proved earlier by Buff and Ch\'eritat in \cite{BC12}.
That is a key step in their remarkable construction of quadratics $P_\alpha$ with positive area Julia sets.
The flexibility of the arguments presented here allows one to perturb the parameter $\alpha$ away from the real line, 
and gain control on the post-critical sets of nearby maps. 
This forms an essential part of a recent construction of Feigenbaum quadratic polynomials with 
positive area Julia sets by Avila and Lyubich reported in \cite{AvLy15}. 

%The above theorem may help with building larger classes of quadratics with positive area Julia sets. 

\medskip

Each sector in $\Omega_0^n$, for $n\geq 1$, forms a ``fundamental domain''  for the dynamics of $f$. 
That is, the orbit of every point in $\pc(f)$ visits each such sector. 
When $f=P_\alpha$, the orbit of almost every point in the Julia set of $f$ must visit all those sectors; 
see Proposition~\ref{visit}. 
We show that in each $\Omega_0^n$ there is a sector whose diameter is bounded by a uniform constant times 
$exp(-\sum_{i\leq n} q_i^{-1} \log q_{i+1})$. 

\begin{thm} \label{acc-on-fixed}
Let $\alf\in \irr$ with $\mathcal{B}(\alpha)=\infty$ and $f\in \mathcal{F}_\alpha$. 
Then, the orbit of every point in $\pc(f)$ visits every neighborhood of $0$. 
In particular, there is no periodic point in $\pc(f)$ except $0$. 

When $f=P_\alpha$, for Lebesgue almost every $z\in \mathbb{C}$, the orbit of $z$ under $P_\alpha$ 
either tends to infinity, or visits every neighborhood of $0$ infinitely often.
\end{thm} 

As the critical orbit may never enter the linearization domain, the size of the smallest sector in each $\Omega_0^n$
provides an upper bound on the size of the Siegel disk. 
%The conformal radius of $\Delta(f)$ is defined as 
%\[\rad (\Delta(f))=|\phi'(0)|,\] 
%where $\phi: B(0,1) \to \Delta(f)$ is the Riemann map sending $0$ to $0$.
%For non-linearizable maps $f$ we let $\rad (\Delta(f))=0$. 

\begin{thm}\label{non-lin-0}
There exists $C> 0$ such that for every $\alf \in \irr$ and every $f \in \mathcal{F}_\alpha$ we have 
\[\operatorname{d}(\partial \Delta(f), 0) \leq C \cdot e^{-\mathcal{B}(\alpha)},\]
where $\operatorname{d} (\partial \Delta(f), 0)$ denotes the distance from $0$ to the boundary of $\Delta(f)$. 
%\[\rad (\Delta(f)) \leq C \cdot e^{-\mathcal{B}(\alpha)}.\]
\end{thm}

On the other hand, Yoccoz in \cite{Yoc95} proves that there is a constant $C'>0$ such that for normalized maps $f$ 
that are defined and one-to-one on $B(0,1)$, $\Delta(f)$ contains the ball of radius $C'\cdot  e^{-\mathcal{B}(\alpha)}$
about $0$.  
By an alternative (and beautiful) approach specific to the quadratic polynomials, Buff and Ch\'eritat \cite{BC04} 
had already established the bound in Theorem~\ref{non-lin-0} for the quadratic polynomials $P_\alpha$ for all  
$\alpha \in \mathbb{R}\setminus \mathbb{Q}$.

The above theorem gives a direct proof of the optimality of the Siegel-Brjuno-Yoccoz arithmetic condition in 
$\mathcal{F}$. 
However, for $\alf \in \irr$, there is a holomorphic motion of the orbit of the critical point over $\ff_\alpha$. 
Thus, the optimality of the arithmetic condition for $P_\alpha$ by Yoccoz, and the classical $\lambda$-lemma 
\cite{Ly83,MSS83}, may be used to derive the optimality of the arithmetic condition in $\mathcal{F}$.

There are points in $\pc(f)$ with dense orbits. 
When $f$ is not linearizable at $0$ (and even for some linearizable $f$) there is an abundance of non-trivial 
invariant sets in $\pc(f)$ in the form of hedgehogs introduced by Perez-Marco. 
Do those sets have a non-trivial basin of attraction in the Julia set. 
This has been answered in \cite{Ch10-II} for all rotations in $\irr$. 
That is, for Lebesgue almost every $z$ in the Julia set of $P_\alpha$, the set of accumulation points of the orbit of 
$z$ under $P_\alpha$ is equal to $\pc(P_\alpha)$. 
This provides a complete description of the topological behavior of the typical orbits of $P_\alpha$, 
modulo the topology of $\pc(P_\alpha)$. 
The topological description of $\pc(f)$ will appear in a forthcoming paper. 

\medskip

The analogue of Theorem~\ref{pc-area} when $\mathcal{B}(\alpha)< \infty$ is proved in \cite{Ch10-II}. 
The reason for the distinction is that the study of the linearizable maps requires finer (distortion) estimates on 
the changes of coordinates that were not available at the time of writing this paper. 

There has been recent advances on the dynamics of quadratic polynomials using the near-parabolic 
renormalization technique and the methods developed in this paper. 
The statistical behavior of the orbits of the maps $f\in \mathcal{F}$ is described in \cite{AC12}. 
The 1/2-h\"older continuity of a relation between the conformal radius of the Siegel disks and the Brjuno series is 
confirmed in \cite{CC13}. 
It is also employed in \cite{ChSh14} to prove the local connectivity of the Mandelbrot set on a Cantor set of parameters 
where the fine scale dynamics degenerates. 

This paper is a step towards developing a theory based on near-parabolic renormalization in order to 
provide a comprehensive description of the dynamics of holomorphic maps with a neutral fixed point. 
One hopes to eventually build a unified language to treat problems of different nature associated with such maps.

\setcounter{tocdepth}{2}
\tableofcontents
\subsection{Frequently used notations}
\begin{itemize} 
\setlength{\leftmargin}{-1em}
\item$:=$ is used when a notation appears for the first time. 
\item $\mathbb{Z}$, $\mathbb{Q}$, $\mathbb{R}$, and $\mathbb{C}$ denote the integer, rational, real, and 
complex numbers, respectively. $\RS:=\cc \cup \{\infty \}$ denotes the Riemann sphere.
\item $\Bi$ denotes the imaginary unit complex number, and $i$ is used as an 
integer index.
\item $\Re z$, $\Im z$, and $|z|$ denote the real part, the imaginary part, and the absolute value of a complex number
$z$, respectively.
\item $B(y,\delta)\subset \cc$ denotes the ball of radius $\delta$ around $y$ in the Euclidean metric, and 
$B_\delta(X):=\cup_{x\in X}B(x,\delta)$, for a given $X\subseteq\cc$.     
\item  $\diam(S)$ and  $\daron(S)$ denote the Euclidean diameter and the interior of a set $S\subset \cc$.
\item Given a map $f$, $f\co{n}$ denotes the $n$ times composition of $f$ with itself.
\item $\Dom f$, $J(f)$, and $\pc(f)$ denote the domain of definition, the Julia set, and the post-critical 
set of a map $f$, respectively.
%\item $\orb_f(z)$ denotes the orbit of a point $z$ under a map $f$. 
%When it is clear what map is involved we drop the subscript $f$. 
\item Univalent map refers to a one-to-one holomorphic map.
\item Given $g\colon\!\!\Dom g\ra\cc$, with only one critical point in its domain of definition, $\cp_g$ 
and $\cv_g$ denote the critical point and the critical value of $g$, respectively. 
\item  For $x\in\mathbb{R}$, $\lfloor x \rfloor$ denotes the largest integer less than or equal to $x$.
\item Unless otherwise stated, $\arg$ denotes the principal branch of argument with values in $(-\pi, \pi]$. 
\end{itemize}   
\renewcommand{\thethm}{\thesection.\arabic{thm}}
\section{Inou-Shishikura class and near-parabolic renormalization}\label{sec:prelim} 
\subsection{Preliminary definitions} Let $f\colon U \subseteq \RS \ra \RS $ be a holomorphic map. 
Given $z \in U$, if $f(z)\in U$ we can define $f\co{2}(z):=f\circ f(z)$. 
Similarly, if $f\co{2}(z)$ also belongs to $U$, $f\co{3}(z)$ is defined, and so on. 
The \textit{orbit of $z$}, denoted by $\orb(z)$, is the sequence, $z,f(z),f\co{2}(z),\dots$ , as long as it 
is defined. 
So it may be a finite or an infinite sequence. 
Given an infinite orbit $\orb(z)$, we say that $\orb(z)$ \textit{eventually stays} in a given set $E\subset\RS$, 
if there exists an integer $k$ such that $\orb(f\co{k}(z))$ is contained in $E$.

The \textit{Fatou set} of a rational map $f:\RS\ra\RS$ is defined as the largest open set $F(f)\subseteq \RS$ 
on which the sequence of iterates $\langle f\co{n}\rangle_{n=0,1,\dots}$ forms a pre-compact family in the 
compact-open topology. 
Its complement, $J(f)$, is the \textit{Julia set} of $f$. 
%The \textit{post-critical} set of $f:\RS\ra\RS$ is defined as 
%\[\pc(f):=\bigcup_{\{c\in\RS ; f'(c)=0\}} \overline{\orb(f(c))}.\]

The \textit{distortion} of $f$ on $U$ is defined as the supremum of $\log (\vert f'(z)/f'(w) \vert)$, for all 
$z$ and $w$ in $U$, in the spherical metric, (which may be finite or infinite). 
We frequently use the following distortion bounds due to Koebe and Grunsky, 
see \cite{Pom75} or \cite[Theorem 3.5]{Dur83}. 

\begin{thm}[Distortion Theorem]\label{T:Distortions}
Suppose that $f\colon B(0,1)\rightarrow\cc$ is a univalent map with $f(0)=0$, and $f'(0)=1$. 
At every $z\in B(0,1)$ we have
\begin{itemize}
 \setlength{\itemsep}{.3em}
\item [1)] $\frac{|z|}{(1+|z|)^2}\leq |f(z)| \leq\frac{|z|}{(1-|z|)^2},$ 
\item [2)] $\frac{1-|z|}{(1+|z|)^3}  \leq |f'(z)| \leq\frac{1+|z|}{(1-|z|)^3},$
\item [3)] $\frac{1-|z|}{1+|z|} \leq |zf'(z)/f(z)|\leq \frac{1+|z|}{1-|z|},$
\item [4)] $|\arg (zf'(z)/f(z))|\leq \log \frac{1+|z|}{1-|z|}.$ 
\end{itemize}
This implies the $1/4$-theorem: the image $f(B(0,1))$ contains $B(0,1/4)$.  
\end{thm}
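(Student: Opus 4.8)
The plan is to run the classical argument: derive Bieberbach's coefficient bound from the area theorem, then extract all three inequalities by feeding that bound into the disk‑automorphism trick and integrating radially. Everything below is standard and could instead simply be quoted from~\cite{P}; I sketch it for completeness. First I would pass to the normalized class $S$ of univalent $f\colon B(0,1)\to\cc$ with $f(0)=0$, $f'(0)=1$, and recall the area theorem: if $G(w)=w+b_0+\sum_{n\ge 1}b_nw^{-n}$ is univalent on $\{|w|>1\}$, then $\sum_{n\ge 1}n|b_n|^2\le 1$, proved by writing the area of the complement of $G(\{|w|>1\})$ via Green's theorem on the circles $|w|=\rho$ and letting $\rho\downarrow 1$. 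Applying it to $G(w)=1/h(1/w)$, where $h(z)=\sqrt{f(z^2)}$ is the odd square‑root transform of $f$ (again univalent), gives Bieberbach's inequality $|a_2|\le 2$ for $f(z)=z+a_2z^2+\cdots\in S$. The $1/4$ theorem then follows at once: for $c\notin f(B(0,1))$ the map $f(z)/(1-f(z)/c)$ lies in $S$ with second coefficient $a_2+1/c$, so $|a_2+1/c|\le 2$ and hence $|c|\ge 1/4$.

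The heart of the matter is the following. Fix $\zeta$ with $|\zeta|=r<1$, let $T_\zeta(z)=(z+\zeta)/(1+\bar\zeta z)$, and set $F(z)=\bigl(f(T_\zeta(z))-f(\zeta)\bigr)/\bigl((1-r^2)f'(\zeta)\bigr)\in S$. A direct computation of the second Taylor coefficient of $F$ gives $a_2(F)=\tfrac12\bigl[(1-r^2)f''(\zeta)/f'(\zeta)-2\bar\zeta\bigr]$, so Bieberbach's bound becomes $\bigl|f''(\zeta)/f'(\zeta)-2\bar\zeta/(1-r^2)\bigr|\le 4/(1-r^2)$. Writing $\zeta=re^{i\theta}$, multiplying the quantity inside the modulus by $e^{i\theta}$ and taking real parts turns the left side into $\partial_r\log|f'(re^{i\theta})|-2r/(1-r^2)$, so $\bigl|\partial_r\log|f'(re^{i\theta})|-2r/(1-r^2)\bigr|\le 4/(1-r^2)$; integrating in $r$ from $0$ to $|z|$ and evaluating the elementary integrals $\int_0^{|z|}(2r\pm 4)/(1-r^2)\,dr$ yields estimate (2). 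Estimate (1) follows from (2): the upper bound by integrating $|f'|$ along the radial segment $[0,z]$, and, when $|f(z)|<1/4$, the lower bound by pulling the segment $[0,f(z)]\subset B(0,1/4)\subset f(B(0,1))$ back to a curve $\Gamma$ from $0$ to $z$ and using the co‑area inequality $\int_\Gamma|f'|\,|dw|\ge\int_0^{|z|}\tfrac{1-t}{(1+t)^3}\,dt$ (valid since the integrand is nonnegative), while the case $|f(z)|\ge 1/4$ is trivial as $\tfrac{|z|}{(1+|z|)^2}\le\tfrac14$. Finally, estimate (3) drops out by applying (1) to $F\in S$ at the point $-\zeta$: since $T_\zeta(-\zeta)=0$ we get $F(-\zeta)=-f(\zeta)/\bigl((1-r^2)f'(\zeta)\bigr)$, and $\tfrac{r}{(1+r)^2}\le|F(-\zeta)|\le\tfrac{r}{(1-r)^2}$ rearranges to $\tfrac{1-r}{1+r}\le|\zeta f'(\zeta)/f(\zeta)|\le\tfrac{1+r}{1-r}$. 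The displayed consequence $B(0,1/4)\subset f(B(0,1))$ is the $1/4$ theorem already obtained.

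The only subtle point is the lower bound in (1): one must know that the straight segment $[0,f(z)]$ really lies inside the image $f(B(0,1))$ before pulling it back, which is exactly where the $1/4$ theorem is used — hence the split into the cases $|f(z)|<1/4$ and $|f(z)|\ge 1/4$. Everything else is routine bookkeeping with the disk automorphisms and two elementary integrals.
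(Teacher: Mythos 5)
Your proof is correct. The paper does not prove this statement at all --- it is quoted as a classical result with a reference to Pommerenke's book --- so there is nothing internal to compare against; your sketch (area theorem $\Rightarrow$ Bieberbach's bound $|a_2|\le 2$ $\Rightarrow$ the $1/4$ theorem and the differential inequality for $\partial_r\log|f'|$ via the disk-automorphism renormalization, then radial integration for (1) and (2) and the Koebe transform evaluated at $-\zeta$ for (3)) is exactly the standard argument from that reference, and all the coefficient computations and elementary integrals you invoke check out, including the case split on $|f(z)|<1/4$ needed to pull back the segment $[0,f(z)]$ for the lower bound in (1).
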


Here we summarize the results of \cite{IS06} in Theorems~\ref{Ino-Shi0}, \ref{Ino-Shi1} and \ref{Ino-Shi2}, 
that we use in this paper. 
They follow from Theorem 2.1 and Main Theorems 1--3 in \cite{IS06}

\subsection{Inou-Shishikura class of maps}\label{Inou-Shishikura-class}
Consider a map $h\colon \!\!\Dom h \ra \cc$, where $\Dom h\subseteq \cc$ denotes the domain of definition (always 
assumed to be open) of $h$. 
Given a compact set $K\subset \Dom h$ and an $\eps>0$, a neighborhood of $h$ is defined as 
\[\mathcal{N}(h;K,\eps):=\{g\colon\!\! \Dom g\ra \cc \mid K\subset \Dom g , \textrm{ and } \sup_{z\in K} 
|g(z)-h(z)|<\eps\}.\]
By ``the sequence $h_n:\Dom h_n\ra \cc$ converges to $h$'' we mean that given an arbitrary neighborhood of 
$h$ defined as above, $h_n$ is contained in that neighborhood for large enough $n$. 
Note that the maps $h_n$ are not necessarily defined on the same set.

Consider the cubic polynomial 
\[P(z):=z(1+z)^2.\] 
It has a \textit{parabolic} fixed point at $0$, that is, $P'(0)=1$. 
Also, it has a critical point at $\cp_P:=-1/3$ which is mapped to the critical value at $\cv_P:=-4/27$, and another 
critical point at $-1$ which is mapped to $0$.
See Figure~\ref{F:cubic-IS}.

\begin{figure}[ht]\label{F:cubic-IS}
\begin{center}
\begin{pspicture}(8.2,8)
\epsfxsize=8cm 
\rput(4,4){\epsfbox{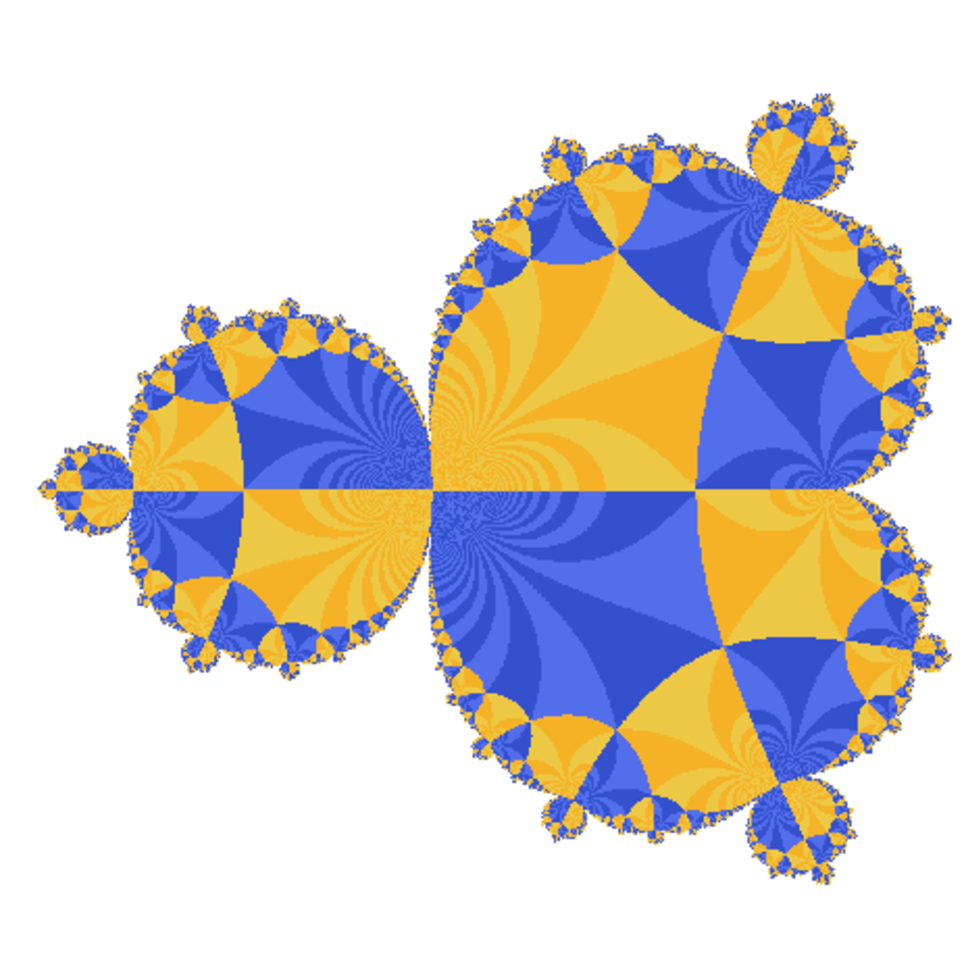}}

\psline[linewidth=.5pt](1.7,1)(3.5,3.8) 
\rput(1.7,.8){\tiny $-1$}

\psline[linewidth=.5pt](3.9,1)(5.7,3.8) 
\rput(3.9,.8){\tiny $-1/3$}

\psline[linewidth=.5pt](7.6,1)(6.45,3.8) 
\rput(7.6,.8){\tiny $-4/27$}

\psline[linewidth=.5pt](8.15,1)(7,3.8) 
\rput(8.15,.8){\tiny $0$}

\end{pspicture}
\end{center}
\caption{The dynamic plane of the polynomial $P(z)=z(1+z)^2$, and the special points $-1$, $-1/3$, $-4/27$, and $0$. 
The shades of blue are mapped by the Fatou coordinate to strips of width one in the upper half plane, while the shades of 
yellow are mapped by the Fatou coordinate to strips of width one in the the lower half pane.}
\end{figure}

Consider the ellipse 
\[E:= \Big\{x+\Bi y\in \cc\mid (\frac{x+0.18}{1.24})^2+(\frac{y}{1.04})^2\leq 1 \Big\},\]
and let 
\begin{equation}\label{U}
U:= g(\RS \setminus E), \text{ where } g(z):=\frac{-4z}{(1+z)^2}.
\end{equation}
The domain $U$ contains $0$ and $\cp_P$, but not the other critical point of $P$ at $-1$.

Following \cite{IS06}, we define the class of maps 
\begin{displaymath}
\ff_0\!:=\Big\{f:=P\circ \vfi^{-1}\!\!:U_f \rightarrow \cc \;\Big|% 
\begin{array}{l} 
\text{$\vfi\colon U \ra U_f$ is univalent, $\vfi(0)=0$, $\vfi'(0)=1$.}
%\text{and $\vfi$ has a quasi-conformal extension to $\cc$.} 
\end{array}
\!\!\!\Big\}.
\end{displaymath}
Every map in this class has a parabolic fixed point at $0$ and a unique critical point at 
$\cp_f:=\vfi(-1/3)\in U_f$.

The class $\ff_0$ corresponds to the class $\mathcal{F}_1$ (and also $\mathcal{F}^P_1$) in the notations of 
\cite{IS06}. 
An extra condition on quasi-conformal extendibility of $\vfi^{-1}: U_f\to \mathbb{C}$ onto 
$\mathbb{C}$ is assumed in that paper. 
However, they have imposed this extra condition only to derive the hyperbolicity of the renormalization operator, which we 
shall introduce a moment. 
As we do not use the hyperbolicity of the renormalization operator in this paper, we have dropped the extra condition 
on the extendibility. 

\begin{thm}[Inou--Shishikura]\label{Ino-Shi0} 
For all $h\in \ff_0$ there exist a domain $\p_h \subset U_h$ and a univalent map 
$\Phi_h\colon \p_h \ra \cc$ satisfying the following:
\begin{itemize}  
\item[(1)] $\p_h$ is bounded by piece-wise analytic curves and is compactly contained in $U_h$. 
It contains $\cp_h$ and $0$ on its boundary.
\item[(2)] $\Phi_h(\p_h)= \{w\in \cc ; 0< \Re w \}$ and when $z\in \p_h\ra 0$, $|\Phi_h(z)| \ra +\infty$, 
\item[(3)] $\Phi_h(h(z))=\Phi_h(z)+1$, for all $z\in \p_h$,  
\item[(4)] the map $\Phi_h$ is unique once normalized by $\Phi_h(\cp_h)=0$. 
Moreover, the normalized map $\Phi_h$ depends continuously on $h$.     
\end{itemize}
\end{thm}

The map $\Phi_h: \p_h\to \cc$ in the above theorem is called the \textit{Fatou coordinate} of $h$. 
The existence of such coordinate for the quadratic map $z\mapsto z+z^2$ was already known to Fatou, 
see for example \cite{Sh00}.

Given $\alf\in \mathbb{R}$, let
\[\ff_\alf:=\{ z\mapsto f(\ea z): e^{-2\pi \alf\Bi}\cdot U_f \to \cc\mid f\in \ff_0\}.\]
All maps in $\ff_\alf$ have a critical value at $-4/27$. 
For the sake of simplicity of notations, we define and work with the quadratic family
\[Q_\alf(z):=\ea z+ \frac{27}{16} e^{4\pi \alf \Bi}z^2,\]
that enjoys the same normalization $\cv_{Q_\alf}=-4/27$. 
Let us combine the two classes under the notation 
\[\QIS_\alf:=\ff_\alf\cup \{Q_\alf\}.\]

The class $\cup_{\alf\in \mathbb{R}}\ff_\alf$ naturally embeds into the space of univalent maps on the unit disk with 
a neutral fixed point at $0$. 
Hence, by the distortion theorem, it is a pre-compact class in the compact-open topology.
Furthermore, it is an application of the area Theorem and the choice of $P$ and $U$ (see Main Theorem 1-a in \cite{IS06} for 
details) that 
\begin{equation}\label{E:bound-on-second-derivative}
\{|h''(0)| ; h\in \ff_0\} \subset [2,7].
\end{equation} 
 
Any map $h=f_0(\ea \cdot)\in\ff_{\alf}$ has a fixed point at $0$ with $h'(0)=\ea$. 
Moreover, if $\alf$ is small, $h$ has another fixed point $\sigma_h\neq 0$ near $0$ in $U_h$. 
The $\sigma_h$ fixed point depends continuously on $h$ and has asymptotic expansion 
$\sigma_h=-4\pi \alf \Bi/f_0''(0)+o(\alf)$, when $h$ converges to $f_0$ in a fixed neighborhood of $0$. 
Clearly $\sigma_h \ra 0$ as $\alf \ra 0$.  

\begin{thm}[Inou--Shishikura]\label{Ino-Shi1} 
There exists a constant $r_1 >0$ such that  for every map $h\colon U_h \ra \cc$ in $\QIS_\alf$ 
with $\alf \in (0,r_1]$, there exist a domain $\p_h \subset U_h$ and a univalent map $\Phi_h\colon \p_h \ra \cc$ 
satisfying the following properties:
\begin{itemize}  
\item[(1)] $\p_h$ is a simply connected region bounded by piece-wise analytic curves and is compactly contained in $U_h$. 
Also, it contains $\cp_h$, $0$, and $\sigma_h$ on its boundary.
\item[(2)] we have 
\[\Phi_h(\p_h) \supseteq \{w\in \cc ; 0< \Re w \leq 1\},\]
with $\Im \Phi_h(z) \ra +\infty$ as $z\in \p_h\ra 0$, and $\Im \Phi_h(z)\ra-\infty$ as $z \in \p_h \ra \sigma_h$.
\item[(3)] $\Phi_h$ satisfies the Abel functional equation, that is, 
\[\Phi_h(h(z))=\Phi_h(z)+1, \text{ whenever $z$ and $h(z)$ belong to $\p_h$}.\] 
\item[(4)] $\Phi_h$ is unique once normalized by $\Phi_h(\cp_h)=0$. 
Moreover, the normalized map $\Phi_h$ depends continuously on $h$.     
\end{itemize}
\end{thm}

In Section~\ref{sec:perturbed} we shall analyze the coordinates $\Phi_h$ introduced in the above theorem.
In particular, we prove the following proposition in Section~\ref{sec:geometry-petals}. 
It is frequently used in this paper. 
There is an alternative proof of this given in \cite[Proposition 12]{BC12}. 

\begin{propo}\label{P:uniformly-bounded-width-spiral} 
There exist a positive constant $r_2$ and integers $\Bk, \hat{\Bk}$ such that for all 
$h\in \QIS_\alf$ with $\alf \in (0,r_2]$, the domain $\p_h$ and the map 
$\Phi_h\colon \p_h \ra \cc$ may be chosen to satisfy the following: 
\begin{itemize}  
\item[(1)] there exists a continuous branch of argument defined on $\p_h$ such that 
\[\max_{w,w'\in \p_h} |\arg(w)-\arg(w')|\leq 2 \pi \hat{\Bk};\]
\item[(2)] $\Phi_h(\p_h)=\{w \in \cc \mid 0 < \Re(w) < \lfloor 1/\alf \rfloor-\Bk\}$.
\end{itemize}
\end{propo}

%\begin{rem}
%We shall see in the proof of the above proposition that one can make the maximum in part 
%1 of the above proposition arbitrarily close to $2\pi$ by making $r_2$ smaller. 
%But, this does not make any simplifications in this paper.
%\end{rem}

\begin{figure}[ht]
\begin{center}
  \begin{pspicture}(10.4,5.3)
\epsfxsize=5.6cm 
\rput(2.85,2.81){\epsfbox{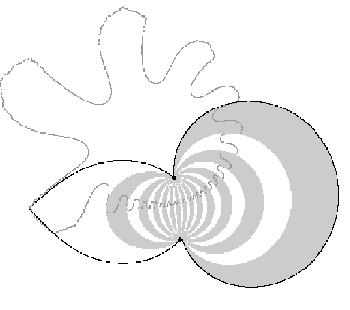}}
  \psset{xunit=.7cm}
  \psset{yunit=.7cm}
     \pspolygon*[linecolor=lightgray,fillcolor=lightgray](10.5,.5)(10.5,6.5)(11,6.5)(11,.5)
     \pspolygon*[linecolor=lightgray,fillcolor=lightgray](9.5,.5)(9.5,6.5)(10,6.5)(10,.5)
     \pspolygon*[linecolor=lightgray,fillcolor=lightgray](13,.5)(13,6.5)(13.5,6.5)(13.5,.5)
     \pspolygon*[linecolor=lightgray,fillcolor=lightgray](12,.5)(12,6.5)(12.5,6.5)(12.5,.5) 
     \psline(13.5,3.4)(13.5,3.6)
     \psline(13,3.4)(13,3.6)
     \psline(12.5,3.4)(12.5,3.6)
     \psline(12,3.4)(12,3.6)

     \psline(9.5,3.4)(9.5,3.6)
     \psline(10,3.4)(10,3.6)
     \psline(10.5,3.4)(10.5,3.6)
     \psline(11,3.4)(11,3.6)
    
     \rput(8.8,3.1){$0$}
     \rput(9.5,3.1){$1$}
     \rput(10,3.1){$2$}
     \rput(10.5,3.1){$3$}
     \rput(11,3.1){$4$}

     \rput(11.5,4){$\cdots$}
     \psaxes*[linewidth=1.2pt,labels=none,ticks=none]{->}(9,3.5)(8.5,.5)(14,6.5)
     \pscurve{->}(6.8,4.3)(8.5,5)(10,4.5)
     \rput(8.2,5.4){$\Phi_h$}
     \rput(4.6,3.6){$0$}
     \rput(4.8,1.9){$\sigma_h$}
     \psdots(.7,2.8)
     \rput(.1,3){$\text{cp}_h$}
     \rput(14,3){$\frac{1}{\alf}-\Bk$}
    %\pscurve{->}(6.5,.8)(6.8,1.3)(6.5,2.1)
     \rput(6.5,2.2){$\mathcal{P}_h$}
    %\pscurve[linecolor=gray,linewidth=.4pt](7.8,1.6)(8.2,3)(7,6.4)(4,7.4)(2,7)(.4,5.4)(.4,3)(.8,1.8)(2.8,.4)   (5.8,.5)(7.8,1.6)
     \rput(1,7){$U_h$}
%\psgrid[gridcolor=gray]
%   \psframe(0,0)(15,7.5)
   \end{pspicture}
\caption{A perturbed Fatou coordinate $\Phi_h$ and its domain of definition $\mathcal{P}_h$.
Similar colors are mapped on one another under $\Phi_h$. 
The gray curve (amoeba) approximates the first few iterates of $\cp_h$ under $h$.}
\label{petal}
\end{center}
\end{figure}
The map $\Phi_h: \mathcal{P}_h\to \cc$ obtained in Theorem~\ref{Ino-Shi1} is called the 
\textit{perturbed Fatou coordinate} of $h$. 
In this paper, by the \textit{perturbed Fatou coordinate} of $h$, or sometimes \textit{Fatou coordinate} of $h$ 
for short, we mean the coordinate that satisfies Proposition~\ref{P:uniformly-bounded-width-spiral} 
or Theorem~\ref{Ino-Shi0}. 
See Figure~\ref{petal}.

\subsection{Near-parabolic renormalization}\label{SS:renormalization-def} \label{sec:npr}
Let $h\colon U_h \ra \cc$ be in $\QIS_\alf$, with $\alf \in (0,r_2]$, where $r_2$ is 
the constant obtained in Proposition~\ref{P:uniformly-bounded-width-spiral}. 
Let $\Phi_h\colon \p_h \ra \cc$ denote the normalized Fatou coordinate of $h$. 
Define 
\begin{equation}\label{sector-def}
\begin{gathered}
\C_h:=\{z\in \p_h : 1/2 \leq \Re(\Phi_h(z)) \leq 3/2 \: ,\: -2< \Im \Phi_h(z) \leq 2 \}, \\
\Csh_h:=\{z\in \p_h : 1/2 \leq \Re(\Phi_h(z)) \leq 3/2 \: , \: 2\leq \Im \Phi_h(z) \}.
\end{gathered}
\end{equation}
By definition, the critical value of $h$, $\cv_h$, belongs to $\daron(\C_h)$, and $0\in \partial(\Csh_h)$. 

Assume for a moment that there exists a positive integer $k_h$, depending on $h$, with the following properties:
\begin{itemize}
\item For every integer $k\in \{1,2,\dots, k_h\}$, there exists a unique connected component of $h^{-k}(\Csh_h)$ 
which is compactly contained in $\Dom h$ and contains $0$ on its boundary. We denote this component by 
$(\Csh_h)^{-k}$. 
\item For every integer $k\in \{1,2,\dots, k_h\}$, there exists a unique connected component of $h^{-k}(\C_h)$ which has 
non-empty intersection with $(\Csh_h)^{-k}$, and is compactly contained in $\Dom h$. 
This component is denoted by $\C_h^{-k}$. 
\item The sets $\C_h^{-k_h}$ and $(\Csh_h)^{-k_h}$ are contained in 
\[\{z\in\p_h \mid  0< \Re \Phi_h(z) <\lfloor 1/\alf\rfloor -\Bk-1/2\}.\] 
\item The maps $h: \C_h^{-k}\to \C_h^{-k+1}$, for $2\leq k \leq k_h$, and $h: (\Csh_h)^{-k}\to (\Csh_h)^{-k+1}$, 
for $1\leq k \leq k_h$ are one-to-one onto. 
The map $h: \C_h^{-1}\to \C_h$ is a two-to-one branched covering.
 
\end{itemize}
Let $k_h$ denote the smallest positive integer for which the above conditions hold, and define
\[S_h:=\C_h^{-k_h}\cup(\Csh_h)^{-k_h}.\]
\begin{figure}[ht]
\begin{center}
 \begin{pspicture}(-.5,1.2)(11.4,9)
%\psgrid
\epsfxsize=6.3cm
\rput(3.5,5.9){\epsfbox{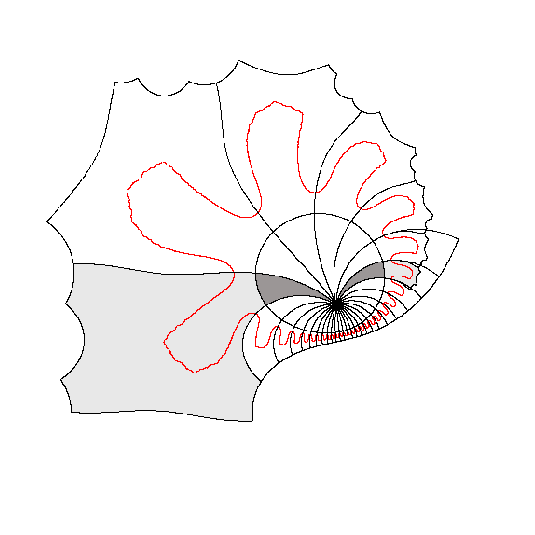}}  
  \psset{xunit=1cm}
  \psset{yunit=1cm}
  % \psframe(0,0)(12,6)
    \pscurve[linewidth=.6pt,linestyle=dashed,linecolor=black]{->}(5.3,5.1)(5.3,5.5)(5.2,5.8)
    \pscurve[linewidth=.6pt,linestyle=dashed,linecolor=black]{->}(2.2,3.7)(2.3,4.1)(2.2,4.5)
    \pscurve[linewidth=.6pt,linestyle=dashed,linecolor=black]{->}(.7,6.4)(2,6.4)(3.2,6.2)(3.6,5.6)
    \rput(5.3,4.7){$S_h$}
    \rput(2.2,3.4){$\C_h^{-1}$}
    \rput(.1,6.4){$(\Csh_h)^{-1}$}
    \psdots[dotsize=2pt](2.3,5.04)(3.4,4.97)
    \rput(2.,5){\small{$\cp_h$}}
    \rput(3.33,4.77){\small{$\cv_h$}}
\newgray{Lgray}{.99}
\newgray{LLgray}{.88}
\newgray{LLLgray}{.70}
%\pspolygon[linecolor=Lgray,fillstyle=solid,fillcolor=Lgray](7.6,7.3)(7.6,4.9)(10.8,4.9)(10.8,7.3)
\psdots(7.6,5.5)(8,5.5)(11,5.5)
\pspolygon[fillstyle=solid,fillcolor=LLLgray](7.8,7.3)(7.8,6.1)(8.2,6.1)(8.2,7.3)
\pspolygon[fillstyle=solid,fillcolor=LLgray](7.8,6.1)(7.8,4.9)(8.2,4.9)(8.2,6.1)

\pspolygon[fillstyle=solid,fillcolor=LLLgray](10.2,7.3)(10.05,6.2)(10.45,6.2)(10.6,7.3)
\pspolygon[fillstyle=solid,fillcolor=LLgray](10.05,6.2)(9.8,5.1)(9.9,5)(10,4.97)(10.1,5)(10.2,5.1)(10.45,6.2)
\psdots(7.6,5.5)(8,5.5)
\psline{->}(7.6,5.5)(11.3,5.5)
\psline{->}(7.6,4.7)(7.6,7.3)
\rput(8,5.3){\tiny{$1$}}
\rput(10.9,5.3){\tiny{$\frac{1}{\alf}-\Bk$}}
\rput(7.3,4.9){\tiny{$-2$}}

\psline{->}(6,5.9)(7.5,5.9)
\rput(6.8,6.1){$\Phi$}

\pscurve[linestyle=dashed]{<-}(7.9,6.7)(8.9,6.9)(10.3,6.7)
\rput(9.1,7.1){\tiny{induced map}}

\psline{->}(7.4,4.7)(6.6,3.2)
\rput(7.7,4.){$e^{2\pi \Bi w}$}
\psellipse(6,2.1)(1.2,.9)
\psdots[dotsize=2pt](6,2.1)
\rput(4,2.6){$\rr' (h)$}
\rput(6.2,2.1){\small{$0$}}
\rput(1,8){$h$}
\psline[linewidth=.5pt]{->}(5.7,1.55)(5.85,1.45)(6.05,1.4)
\NormalCoor
\psdot[dotsize=1pt](5.65,1.6)
\psdot[dotsize=1pt](6.1,1.4)
   %\psgrid[gridcolor=lightgray]
 \end{pspicture}
\caption{The sets $\C_h$, $\Csh_h$,..., $\C_h^{-k_h}$, and $(\Csh_h)^{-k_h}$. 
The ``induced map'' projects via $e^{2\pi \Bi w}$ to a well defined map $\rr (h)$ on a neighborhood of $0$.}
\label{sectorpix}
\end{center}
\end{figure}

Consider the map 
\begin{equation}\label{renorm-def}
\Phi_h \circ h\co{k_h} \circ \Phi_h^{-1}:\Phi_h(S_h) \ra \cc. 
\end{equation}
By the Abel functional equation, this map commutes with the translation by one, and hence projects via 
$z=\frac{-4}{27}e^{2 \pi \Bi w}$ to a map $\rr' (h)$ defined on a set punctured at zero. 
However, it extends across zero and has the form $z \mapsto e^{2 \pi \frac{-1}{\alf}\Bi}z+ O(z^2)$ near there.
See Figure~ \ref{sectorpix}. 

The conjugate map $s\circ \rr' (h)\circ s^{-1}$, where $s(z):=\bar{z}$ denotes the complex conjugation map, 
has the form $z \mapsto e^{2 \pi \frac{1}{\alf}\Bi}z+O(z^2)$ near $0$. 
The map  $\rr (h):= s\circ \rr' (h)\circ s^{-1}$, restricted to the interior of 
$s(\frac{-4}{27}e^{2\pi \Bi(\Phi_h(S_h))})$, is called the \textit{near-parabolic renormalization} of $h$ by 
Inou and Shishikura. 
We simply refer to it as the \textit{renormalization} of $h$. 
One can see (Lemma~\ref{renorm}) that one time iterating $\rr(h)$ corresponds to several times iterating $h$, through 
the changes of coordinates. 
For some applications of closely related renormalizations (Douady-Ghys renormalization) one may  refer to 
\cite{Do86,Do94,Yoc95,Sh98} and the references therein.

It is a non-trivial task to control the shapes and the locations of the sets $\C_h^{-k}$ and ${\Csh_h}^{-k}$ for a given map in 
$\ff_\alpha$. 
This is the key content of \cite{IS06}, which is carried out using a remarkable series of estimates on univalent mappings. 
In this paper we do not use the many statements proved on the geometry and locations of these sets in that paper, 
but need the following. 

\begin{propo}\label{P:location-critical-piece}
There are positive constants $r_2'>0$ and $C < 2 \pi$ such that for every $\alpha \in (0, r_2']$ and every $h\in \QIS_\alpha$, 
\[\sup \{\arg z_1 - \arg z_2  \mid z_1, z_2 \in \mathcal{C}_h^{-1} \} \leq C,\]
for every continuous branch of argument defined on $\mathcal{C}_h^{-1}$.
\end{propo}

\begin{proof}
According to \cite{IS06}, for every $h\in \QIS_0$ the sets $\C_h^{-k}$ and ${\Csh_h}^{-k}$ are defined for all 
$k\geq 0$. 
That is, for large enough $k$ these are contained in the repelling Fatou coordinate of the map $h$ and then further 
pre-images are defined by the general properties of the Fatou coordinates. 
Comparing to their notations, $\C_h^{-1}$ is contain in the union 
\[\psi_0(D_0) \cup \psi_0(D_0') \cup \psi_0(D_{-1}) \cup \psi_0(D_{-1}''), \]  
where $\psi_0(z)=-4/z$. 
See Section 5.A--Outline of the proof. 
They prove in Proposition~5.7-(e) that the close of the set $D_0 \cup D_0' \cup D_{-1} \cup D_{-1}''$ does not intersect the 
negative real axis. 
In particular, it follows that $\sup \arg z_1/z_2 < 2\pi$, for $z_1, z_2 \in \mathcal{C}_h^{-1}$, for each $h\in \QIS_0$. 
By the pre-compactness of the class of maps $\QIS_0$, there is a constant $C'<2\pi$ such that the supremum is bounded 
from above by $C'$ over all maps $h\in \QIS_0$. 
Then, by the continuous dependence of the Fatou coordinate on the map, there are $r_2'>0$ and $C<2\pi$ satisfying the 
conclusion of the proposition. 
\end{proof}

The following theorem \cite[Main theorem 3]{IS06} states that the above definition of  
renormalization $\rr$ can be carried out for certain perturbations of maps in $\ff_0$. 
In particular, this implies the existence of $k_h$ satisfying the four properties listed in the definition of renormalization. 
There is also a detailed argument on this given in \cite[Proposition 13]{BC12} 
\footnote{The sets $\C_h^{-k}$ and $(\Csh_h)^{-k}$ defined here are (strictly) contained in the closure of the sets 
denoted by $V^{-k}$ and $W^{-k}$ in \cite{BC12}. 
The set $\Phi_h(\C_h^{-k}\cup (\Csh_h)^{-k})$ is contained in the closure of the union 
\[D^\sharp_{-k} \cup D_{-k} \cup D''_{-k} \cup D'_{-k+1} \cup D_{-k+1} \cup D^\sharp_{-k+1}\] 
in the notation used in \cite[Section 5.A]{IS06}.}.

Define 
\begin{equation}\label{V}
V:=P^{-1}(B(0,\frac{4}{27}e^{4\pi}))\setminus((-\infty,-1]\cup \overline{B})
\end{equation}
where $B$ is the component of $P^{-1}(B(0,\frac{4}{27}e^{-4\pi}))$ containing $-1$ (see Figure~\ref{poly}). 
By an explicit calculation (see \cite[Proposition 5.2]{IS06}) one can see that $\overline{U}\subset V$.   
\begin{figure}[ht]
\begin{center}
  \begin{pspicture}(8,3.2)
  \rput(4.5,1.6){\epsfbox{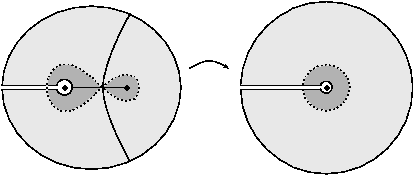}}
%  \psgrid[gridcolor=gray]
      \rput(6.12,1.6){{\small $\times$}}
      \rput(5.9,1.4){{\small $cv_P$}}
      \rput(6.7,1.8){{\small $0$}} 
      \rput(2.7,1.6){{\small $\times$}}
      \rput(2.7,1.3){{\small $cp_P$}}
      \rput(3.2,1.8){{\small $0$}}
      \rput(1.85,1.3){{\small $-1$}}
      \rput(4.5,2.2){{\small $P$}}
      \rput(1.1,2.7){{\small $V$}} 
\end{pspicture}
\caption{A schematic presentation of the polynomial $P$; its domain, and its range. 
Similar colors and line styles are mapped on one another.}
\label{poly}
\end{center}
\end{figure}

\begin{thm}[Inou-Shishikura]\label{Ino-Shi2} 
There exist a constant $r_3>0$ such that if $h \in \ff_\alf$ with $\alf \in (0, r_3]$, then $\rr(h)$ is 
well-defined and belongs to the class $\ff_{1/\alf}$, that is, $\rr(h)(z):=P \circ \psi^{-1}(e^{\frac{2\pi}{\alf}\mathbf{i}}\cdot z)$ 
for a univalent map $\psi:U\to \cc$. 
Moreover, $\psi$ extends to a univalent map on $V$.    

The same conclusion holds for the map $Q_{\alf}(z)=\ea z+\frac{27}{16}e^{4\pi \alf \mathbf{i}} z^2$. 
That is, $\rr(Q_\alf)$ is well-defined and belongs to $\ff_{1/\alf}$ provided $\alf \in (0, r_3]$.
\end{thm}

A uniform bound on $k_h$ is established in Section~\ref{sec:width-of-L_h(X)} .

\begin{propo}\label{P:turning}
There is $\Bk''\in \mathbb{N}$ such that for all $h\in \ff_\alf$ with $\alf\in (0, r_3]$, $k_h\leq \Bk''$. 
\end{propo}

Let $[0; a_1,a_2, \dots]$ denote the continued fraction expansion of $\alf$ as in the introduction. 
Define $\alf_0:=\alf$, and inductively for $i\geq1$ define the sequence of real numbers $\alf_i\in (0,1)$ as 
\[\alf_i:=1/\alf_{i-1} \pmod 1.\] 
Then each $\alf_i$ has expansion $[0; a_{i+1}, a_{i+2}, \dots]$. 
If we fix a constant $N\geq 1/r_3$, then $\alf\in \irr$ implies that $\alf_{j}\in (0,r_3]$, for $j=0,1,2,\dots$.  
We use this constant $N$ throughout the rest of this article.

Let $\alf\in \irr$ and $f_0\in \QIS_\alf$. 
Then, using Theorem~\ref{Ino-Shi2}, we may inductively define the sequence of maps 
\[f_{n+1}:=\rr (f_n): U_{f_{n+1}} \to \cc.\]
Let $U_n:=U_{f_n}$ denote the domain of definition of $f_n$, for $n\geq 0$. 
Hence, for every $n$,  
\[f_n:U_n\to \cc,\; f_n(0)=0,\; f_n'(0)= e^{2\pi \alf_n \Bi}, \;  \text{and } \cv_{f_n}=-4/27.\]
\section{Dynamically defined neighborhoods of the post-critical set}\label{sec:neighborhood}
Recall the constants $\Bk, \hat{\Bk}$ introduced in Proposition~\ref{P:uniformly-bounded-width-spiral} and the constant $N$ introduced 
at the end of the previous section.

\begin{rem}
To slightly simplify the technical details of proofs, we assume that 
\begin{equation}\label{alf*-1}
N\geq \Bk+\hat{\Bk}+2.
\end{equation}
The reason to impose this is to make $\Phi_{f_n}(\p_{f_n})$ wide enough to contain a set defined later. 
However, one can avoid this condition by extending $\Phi_{f_n}$ and $\Phi_{f_n}^{-1}$ to larger domains, using the 
dynamics of $f_n$. 
We postpone this argument to Section \ref{S:Going-Up}.
\end{rem}

\subsection{Changes of coordinates, renormalization tower}\label{sec:sectors-introduced}
For $n\geq 0$, let $\Phi_n:=\Phi_{f_n}$ denote the Fatou coordinate of $f_n\colon U_n\ra \cc$ 
defined on the set $\p_n:=\p_{f_n}$. 
For our convenience we use the notation  
\[\ex(\zeta):= \zeta \longmapsto \frac{-4}{27} s (e^{2\pi \Bi \zeta}):\cc \ra \cc^*, \text{ where } s(z)=\bar{z}.\]
By Proposition~\ref{P:uniformly-bounded-width-spiral}, Inequality~\eqref{alf*-1}, and that $\p_n$ is simply connected, 
there is an (anti-holomorphic) inverse branch 
\[\eta_n: \p_n\to \Phi_{n-1}(\p_{n-1})\] 
of $\ex$.
There may be several choices for this map but we choose one of them (for each $n$) such that  
\begin{equation}\label{WidthofLift}
\Re (\eta_n(\p_n))\subset [0,\hat{\Bk}+1] 
\end{equation}
holds, and fix this choice for the rest of this article. 
Now define 
\begin{equation}\label{E:psi-injection}
\psi_n :=\Phi_{n-1}^{-1} \circ \eta_n  \colon \p_{n}\ra \p_{n-1}.
\end{equation}
Each $\psi_n$ extends continuously to $0\in \partial \p_n$ by mapping it to $0$. 

For $n\geq 2$ we can form the compositions  
\[\Psi_n:=\psi_1 \circ \psi_2 \circ \dots \circ \psi_n\colon \p_n \ra \p_0\subset U_0.\] 

For every $n\geq 0$, let $\C_n$ and $\Csh_n$ denote the corresponding sets for $f_n$ defined in \eqref{sector-def} 
(i.e., replace $h$ by $f_n$). 
Denote by $k_n$ the smallest positive integer with  
\[S_n^0:=\C_n^{-k_n}\cup (\Csh_n)^{-k_n}\subset \{z\in \p_n \mid 0< \Re \Phi_n(z)< \lfloor 1/\alf_n \rfloor -\Bk-1/2\}.\]
By definition, the critical value of $f_n$ is contained in $f_n\co{k_n}(S_n^0)$.

For every $n\geq 0$ and $i\geq 2$, define the sectors 
\begin{gather*}
S_n^1:=\psi_{n+1}(S_{n+1}^0)\subset \p_n, \;  S_n^i:=\psi_{n+1}\circ \dots \circ\psi_{n+i}(S_{n+i}^0)\subset \p_n.
\end{gather*}
All these sectors contain $0$ on their boundaries. 
\subsection{Orbit relations on the renormalization tower}

\begin{lem}\label{renorm}
Let $z\in\p_n$ be a point with $w\!\!:=\ex\circ \Phi_n(z)\in U_{n+1}$.
There exists an integer $\ell_z$ with  $1\leq \ell_z \leq \lfloor 1/\alf_n\rfloor-\Bk-1+k_n$, 
such that
\begin{itemize}
\item the finite orbit $z,f_{n}(z),f_{n}\co{2}(z),\dots,f_{n}\co{\ell_z}(z)$ is defined, $f_{n}\co{\ell_z}(z)\in \C_n\cup \Csh_n$;
\item $\ex\circ \Phi_{n}(f_{n}\co{\ell_z}(z))= f_{n+1}(w)$;
\item if in addition $w\in f_{n+1}(U_{n+1})$, then 
\[z,f_{n}(z),f_{n}\co{2}(z),\dots,f_{n}\co{\ell_z}(z) \in \bigcup_{i=0}^{k_{n}+\lfloor 1/\alf_{n} \rfloor-\Bk-2}f_{n}\co{i}(S_{n}^0).\]
\end{itemize}
\end{lem}

\begin{proof}
As $w\in \Dom f_{n+1}$, by the definition of renormalization $\rr(f_{n})=f_{n+1}$, there are 
$\zeta \in \Phi_{n}(S^0_{n})$ and $\zeta' \in \Phi_{n}( \C_{n}\cup\Csh_{n})$, 
such that
\begin{equation*} 
\ex(\zeta)=w,\quad \ex(\zeta')=f_{n+1}(w),\text { and} \quad \zeta'=\Phi_{n}\circ f_{n}\co{k_{n}}\circ\Phi_{n}^{-1}(\zeta).
\end{equation*}
Since $\ex (\Phi_{n}(z))=w$, there exists an integer $\ell$ with 
\[\Phi_{n}(z)+\ell=\zeta \text{ and } -k_{n}+1\leq\ell\leq \lfloor 1/\alf_{n}\rfloor-\Bk-1.\]
By the Abel functional equation for $\Phi_{n}$, we have
\begin{equation*}
\zeta' =\Phi_{n}\circ f_{n}\co{k_{n}}\circ\Phi_{n}^{-1}(\zeta)
          =\Phi_{n}\circ f_{n}\co{k_{n}}\circ\Phi_{n}^{-1}(\Phi_{n}(z)+\ell)
          =\Phi_{n}\circ f_{n}\co{k_{n}+\ell}(z).
\end{equation*}
Letting $\ell_z:=k_{n}+\ell$, we have 
\begin{gather*} 
1 \leq  \ell_z \leq k_{n}+\lfloor 1/\alf_{n} \rfloor-\Bk-1, \; f_{n}\co{\ell_z}(z)=\Phi_{n}^{-1}(\zeta') \in \C_n\cup \Csh_n,\\
\ex\circ\Phi_{n}(f_{n}\co{\ell_z}(z))=\ex \circ \Phi_{n}(\Phi_{n}^{-1}(\zeta')) =\ex (\zeta') =f_{n+1}(w).
\end{gather*}
This proves the first two parts. 

For the last part, first note that by the assumption on $w$, $\Im \Phi_n(z) >-2$. 
Now, if $\ell>0$, then
\begin{gather*}
 z,f_{n}(z), \dots, f_{n}\co{(\ell-1)}(z) \in \bigcup_{i=k_{n}-1}^{k_{n}+\lfloor 1/\alf_{n} 
\rfloor-\Bk-2} f_{n}\co{i}(S_{n}^0), \;
f_{n}\co{\ell}(z), \dots,f_{n}\co{\ell_z}(z) \in \bigcup_{i=0}^{k_{n}} f_{n}\co{i}(S_{n}^0).
\end{gather*}
If $\ell\leq 0$, then
\[z,f_{n}(z), \dots,f_{n}\co{\ell_z}(z) \in \bigcup_{i=-\ell}^{k_{n}} f_{n}\co{i}(S_{n}^0).\]
\end{proof}

Define 
\[\p_n':=\{w\in\p_n\mid 0 <\Re \Phi_{n}(w) <\lfloor 1/\alf_n\rfloor-\Bk-1\}.\]

\begin{lem}\label{one-level}
For every $n\geq 1$ we have 
\begin{itemize}
 \setlength{\itemsep}{.2em}
\item[(1)]for every $w\in \p_n'$, $f_{n-1}\co{\lfloor 1/\alf_{n-1}\rfloor}% 
\circ \psi_n(w)=\psi_n\circ f_n(w)$, 
\item[(2)]for every $w\in S^0_n$, $f_{n-1}\co{(k_n\lfloor 1/\alf_{n-1}\rfloor+1)}
\circ \psi_n(w)=\psi_n\circ f_n\co{k_n}(w)$. 
\end{itemize}
\end{lem}

This is summarized in the following two diagrams
\begin{align*}\label{diagram}
\xymatrix{
  \p_{n-1} \ar[rr]^{f_{n-1}\co{\lfloor 1/\alf_{n-1}\rfloor}}& & \p_{n-1} \\
  \p_n' \ar[u]^{\psi_n} \ar[rr]^{f_n}& & \p_n\ar[u]^{\psi_n}} \qquad\qquad
\xymatrix{
  \p_{n-1}\ar[rr]^{f_{n-1}\co{k_n\lfloor 1/\alf_{n-1}\rfloor+1}} & \hspace{1em} & \p_{n-1}\\
  S_n^0 \ar[u]^{\psi_n} \ar[rr]^{f_n\co{k_n}} & \hspace{1em} &\C_n\cup\Csh_n \ar[u]^{\psi_n}} 
\end{align*}

\begin{proof}
\textsl{Part (1):} The proof is given in three steps.
\medskip

{\em Step 1:} 
For every $w\in\p_n'$ there exists a positive integer $m_w$ with 
\[f_{n-1}\co{m_w}\circ \psi_n(w)=\psi_n\circ f_n(w).\] 
By the definition of renormalization $\rr f_{n-1}=f_n$, there are $\zeta\in \Phi_{n-1}(S^0_{n-1})$ and 
$\zeta'\in \Phi_{n-1}(\C_{n-1}\cup \Csh_{n-1})$ as well as integers $t_1$ and $t_2$ with 
\begin{gather*}
\zeta'=\Phi_{n-1}\circ f_{n-1}\co{k_{n-1}}\circ\Phi_{n-1}^{-1}(\zeta), 
\zeta=\eta_n(w)+t_1, \zeta'=\eta_n(f_n(w))+t_2\\
\vert t_i \vert \leq \lfloor 1/\alf_{n-1}\rfloor-\Bk, \text{ for } i=1,2. 
\end{gather*}
This implies that 
\[\eta_n(f_n(w))=\Phi_{n-1}\circ f_{n-1}\co{(k_{n-1}+t_1-t_2)}\circ \Phi_{n-1}^{-1}(\eta_n(w)).\] 
Hence, $f_{n-1}\co{m_w} \circ \psi_n(w)=\psi_n\circ f_n(w)$, for $m_w=k_{n-1}+t_1-t_2$.
\medskip

{\em Step 2:} 
$m_w$ is a constant independent of $w\in\p_n'$. 
\noindent We use the connectivity of $\p_n'$.  
For $j\in A:=\{1,2,\dots, k_{n-1}+2(\lfloor 1/\alf_{n-1}\rfloor-\Bk)\}$ set
\[X_j:=\{w\in\p_n'\mid f_{n-1}\co{j}(\psi_n(w))\text{ is defined and }f_{n-1}\co{j}\circ\psi_n(w)-\psi_n\circ f_n(w)=0\}.\]
It follows from Step $1$ that $\p_n'=\cup_{j\in A} X_j$.
Let $m$ be the smallest element of $A$ such that $\daron(X_m)$ is  non-empty. 
We claim that $S:=\cup_{j\in A, j\geq m} X_j$ is connected. 
Otherwise, $\p_n'\setminus S$ is an uncountable set contained in $\cup_{j=1}^{m-1} X_j$. 
This implies that at least one of $X_1, X_2,\dots, X_{m-1}$, say $X_i$, is uncountable, and hence has an accumulation point 
in itself. 
As the set of points where $f_{n-1}\co{i}(\psi_n(w))$ is defined is open, and $f_{n-1}\co{i}\circ\psi_n-\psi_n\circ f_n$ 
is anti-holomorphic, $\daron(X_i)$ must be non-empty.
Therefore, $S$ must be connected. 

The anti-holomorphic map $f_{n-1}\co{m}\circ\psi_n-\psi_n\circ f_n$ is defined on the connected set $S$ and 
is equal to $0$ on an open subset of $S$. 
Hence, it must be $0$ on all of $S$. 
Finally, since $\p_n'\setminus S$ is discrete, the equality holds on all of $\p_n'$.     
\medskip

{\em Step 3:} $m_w=\lfloor 1/\alf_{n-1}\rfloor$.

\noindent By virtue of Step $2$, it is enough to find the asymptotic value of $m_w$ as $w\in \p_n'$ tends to $0$.
By an analysis of the Fatou coordinates that will be carried out in Section~\ref{sec:perturbed} 
(see Equation~\eqref{E:asymptote-of-L_h^{-1}}), we shall see that for all 
$w_1, w_2\in \p_n$, $\arg (\psi_n(w_2)/\psi_n(w_1))+ \alf_{n-1}\arg(w_2/w_1)\to 0$ $\pmod{2\pi}$ 
as $w_1, w_2\to 0$, (for any branches of $\arg$ defined on $\p_n$ and $\p_{n-1}$). 
Note that the change in sign is because $\eta_n$ is anti-holomorphic.
Now, as $w\in \p_n'$ tends to $0$, $\arg (f_n(w)/w)\to 2\pi \alf_n$ $\pmod{2\pi}$. 
Hence, $\arg (\psi_n(f_n(w))/\psi_n(w))\to -2\pi \alf_n\alf_{n-1} \pmod{2\pi}$.   
On the other hand, for the irrational number $\alf_{n-1}$, $\lfloor 1/\alf_{n-1}\rfloor$ is the unique 
positive integer $j$ for which 
$\arg (f_{n-1}\co{j}(w')/w')\to -2\pi \alf_n\alf_{n-1}$ $\pmod{2\pi}$, as $w'\to 0$. 
\medskip

\textsl{Part (2):} The above steps work to prove this part as well. In step $1$, one needs to use 
Lemma~\ref{renorm} $k_n$ times. 
In step $2$, one only replaces $\p_n'$ by $S^0_n$, and uses connectivity of $S_n^0$. 
For the last step, one has $\arg (w/f_n\co{k_n}(w))\to 2\pi (1-k_n\alf_n) \pmod{2\pi}$, as $w\to 0$ in $S_n^0$.
As in the previous case, $\arg (\psi_n(w)/\psi_n(f_n\co{k_n}(w)))\to -2\pi (1-k_n\alf_n)\alf_{n-1} \pmod{2\pi}$.   
This uniquely determines the number of iterates of $f_{n-1}$ required to map 
$\psi_n(w)$ to $\psi_n(f_n\co{k_n}(w))$.   
\end{proof}

\begin{lem}\label{conjugacy}
For every $n\geq 1$ we have 
\begin{itemize}
 \setlength{\itemsep}{.2em}
\item[(1)]for every $w\in \p_n'$, $f_0\co{q_n}\circ\Psi_n(w)=\Psi_n \circ f_n(w)$,
\item[(2)]for every $w\in S^0_n$, $f_0\co{(k_nq_n+q_{n-1})}\circ\Psi_n(w)=\Psi_n\circ f_n\co{k_n}(w)$,
\item[(3)]similarly, for every $m<n$, $f_n\colon \p_n' \ra \p_n$ and $f_n\co{k_n}:S_n^0\ra (\C_n\cup\Csh_n)$ 
are conjugate to some iterates of $f_m$ on the set $ \psi_{m+1} \circ \dots \circ \psi_n(\p_n)$. 
\end{itemize}
\end{lem}

Parts ($1$) and ($2$) of the lemma are illustrated in the following diagrams 
\begin{align*}
\xymatrix{
  \p_0 \ar[r]^{f_0\co{q_n}} & \p_0 \\
  \p_n' \ar[u]^{\Psi_n} \ar[r]^{f_n} & \p_n\ar[u]^{\Psi_n}} \qquad\qquad
\xymatrix{
  \p_0 \ar[rr]^{f_0\co{(k_nq_n+q_{n-1})}} & & \p_0 \\
  S_n^0 \ar[u]^{\Psi_n} \ar[rr]^{f_n\co{k_n}} & &\C_n\cup\Csh_n\ar[u]^{\Psi_n}} 
\end{align*}

\begin{proof}
We give a proof for the first part in three steps. 
The other parts can be proved by the same arguments.
\medskip

{\em Step 1:}  
For every $w\in\p_n'$ there exists a positive integer $m_w$ with 
\[f_0\co{m_w}\circ\Psi_n(w)=\Psi_n \circ f_n(w).\] 
By Lemma~\ref{one-level}, $\psi_n(w)$ is mapped to $\psi_n(f_n(w))$ under the iterate 
$f_{n-1}\co{\lfloor 1/\alf_{n-1}\rfloor}$. 
The orbit 
\[\psi_n(w), f_{n-1}(\psi_n(w)), \dots, f_{n-1}\co{\lfloor 1/\alf_{n-1}\rfloor}(\psi_n(w))=\psi_n(f_n(w))\] 
has a subset of the form 
\begin{multline*}
\psi_n(w), f_{n-1}(\psi_n(w)), \dots, f_{n-1}\co{j}(\psi_n(w))\\ 
, f_{n-1}\co{(j+k_{n-1})}(\psi_n(w)), f_{n-1}\co{(j+k_{n-1}+1)}(\psi_n(w)), 
\dots,  f_{n-1}\co{\lfloor 1/\alf_{n-1}\rfloor}(\psi_n(w))
\end{multline*}
contained in $\p_{n-1}$, where $f_{n-1}\co{j}(\psi_n(w))\in S^0_{n-1}$. 
Using Lemma~\ref{one-level} (with $n-1$) for each consecutive pair in the above list, one concludes that 
$\psi_{n-1}(\psi_{n}(w))$ is mapped to $\psi_{n-1}(\psi_{n}(f_n(w)))$ under some iterate of $f_{n-2}$. 
By an inverse inductive argument (at levels $n-2, n-3,\dots, 1$), one concludes the claim.   
\medskip

{\em Step 2:}  $m_w$ is a constant independent of $w\in\p_n'$.

\noindent The proof in Step $2$ of the previous lemma works here as well. 
Indeed, as $f_0\co{j}\circ\Psi_n(w)$ is defined for all positive integers $j$ and $w\in \p_n'$, the 
proof is slightly easier here.   
\medskip

{\em Step 3:} $m_w=q_n$.

\noindent Similar to the proof in the previous lemma, we use the property that for every $j$ and $w_1, w_2\in \p_j$,
$\arg (\psi_j(w_2)/\psi_j(w_1))+\alf_{j-1}\arg(w_2/w_1)\to 0$ $\pmod{2\pi}$ as $w_1, w_2\to 0$. 
This will be proved in Section~\ref{sec:perturbed} (see Equation~\eqref{E:asymptote-of-L_h^{-1}}). 
Now, as $w\in \p_n'$ tends to $0$, $\arg (f_n(w)/w)$ tends to $2\pi \alf_n$ $\pmod{2\pi}$. 
Hence, 
\[\arg (\Psi_n(f_n(w))/\Psi_n(w))\to (-1)^{n}2\pi \alf_0 \cdots\alf_n \pmod{2\pi}.\]    
On the other hand, $q_n$ is the unique positive integer for which 
$\arg (f_0\co{q_n}(w')/w')\to (-1)^{n}2\pi \alf_0 \cdots\alf_n$ $\pmod{2\pi}$, as $w'\to 0$. 
\end{proof}
%%%%%%%%%%%%%%%%%%%%%%%%%%%%%%%%%%%%%%%%%%%%%%%%%%%%%%%%%%
\subsection{A nest of neighborhoods of the post-critical set}\label{sec:neighbors}

For $n\geq 0$, define the positive integers 
\[b_n:=k_n+\lfloor 1/\alf_n\rfloor-\Bk-2,\]
and consider the union
\begin{equation}\label{union-0}
\Omega^0_n:=\cup_{i=0}^{b_n}f_n\co{i}(S^0_n) \cup \{0\}.
\end{equation}
Using Lemma~\ref{conjugacy}, we transfer the iterates in the above union to the dynamic plane of $f_0$ to obtain
\[\Omega^n_0:=\cup_{i=0}^{q_n b_n+q_{n-1}} f_0\co{i}(S^n_0) \cup \big\{0\big\}.\]
The upper bound in the above union is obtained as follows. 
The first $k_n$ iterates in \eqref{union-0} corresponds to $k_nq_n+q_{n-1}$ iterates on level $0$ by 
Lemma~\ref{conjugacy}-$2$.
The remaining $\lfloor 1/\alf_n\rfloor-\Bk-2$ iterates in \eqref{union-0} 
amounts to $q_n(\lfloor 1/\alf_n\rfloor-\Bk-2)$ iterates by Lemma~\ref{conjugacy}-1. 
%Thus, totally one obtains 
%\begin{align*}
%(k_n q_n+q_{n-1}) + q_n(\lfloor 1/\alf_n\rfloor-\Bk-1)&=q_n(k_n+\lfloor 1/\alf_n\rfloor-\Bk-1)+ q_{n-1}.
%\end{align*}
The neighborhoods $\Omega_n^i$, for $i\geq 1$, may be defined accordingly. 
Using Lemma~\ref{conjugacy}, first choose the unique integer $l_{n,i}$ such that 
$f_{n+i}\co{k_{n+i}+\lfloor 1/\alf_{n+i}\rfloor -\Bk-2}$ on $S_{n+i}^0$ corresponds to $f_n\co{l_{n,i}}$ on $S_n^i$.  
Then, define 
\[\Omega_n^i:= \cup_{j=0}^{l_{n,i}} f_n\co{j} (S_n^i) \cup \{0\}.\]
See Figure~\ref{easy-renormalization}. 

\begin{figure}
\begin{center}
\begin{pspicture}(5,0)(11,4.8)

\rput(8.5,2.4){\epsfbox{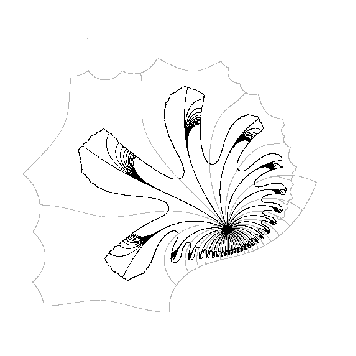}}

\pscurve[linewidth=.2pt]{->}(5.9,2.3)(5.7,1.9)(6,1.6)
\pscurve[linewidth=.2pt]{->}(8.42,.2)(8.65,.2)(8.6,.55)
\pscurve[linewidth=.2pt]{->}(8.67,.61)(8.8,.63)(8.76,.76)
\rput(9,.4){\small{$P_\alf$}}

\rput(7.5,.9){\tiny{$\times$}}

\pscurve[linewidth=.2pt]{->}(7.22,.95)(7.15,1)(7.3,1.2)
\pscurve[linewidth=.2pt]{->}(7.33,1.23)(7.38,1.32)(7.5,1.3)

%\psframe[linewidth=.4pt](-.4,0)(4.8,4.5)
\psframe[linewidth=.4pt](5.5,0)( 11.1,4.5)
\rput(7.1,1.4){\small{$f_n$}}
%\psgrid
\end{pspicture}
\caption{%The left figure shows a sector landing at $0$ and the induced horn map $E_f$. 
%The map $E_f$ projects to $\rr f$ defined near $0$. 
In the figure, $\Omega^0$ is the union of the sectors with gray boundaries. 
The domain $\Omega^n$ is bounded by the black curve (amoeba) and some of the sectors in $\Omega^n$ are shown.  
The map $f_n$ corresponds to the $n$-th renormalization of $P_\alf$. 
The critical point of $P_\alf$ is denoted by ``$\times$'' here.}
\label{easy-renormalization}
\end{center}
\end{figure}

\begin{propo}\label{neighbor}
For every $f_0\in \QIS_\alf$, with $\alf\in \irr$, and every $n\geq0$, 
\begin{enumerate}
\item[(1)] $\Omega^{n+1}_0$ is compactly contained in the interior of $\Omega^n_0$;
\item[(2)] $f_0:\Omega_0^{n+1}\to \Omega_0^n$; 
\item[(3)] $\pc(f_0)$ is contained in the interior of $\Omega^n_0$.\footnote{For further properties of this 
nest see Proposition~\ref{equal-to-PC}.}
\end{enumerate}
\end{propo}

\begin{proof} 
{\em Part (1):} First we prove that $\Omega^{n+1}_0 \subset \Omega^n_0$. 
To do this, it is enough to show that for every $z \in S^{n+1}_0$ there are points $z_1,z_2,\dots, z_{m}$ in 
$S^n_0$ as well as non-negative integers $t_0,t_1,\dots,t_{m}$ with the following properties:
\begin{itemize}
 \setlength{\itemsep}{.2em}
\item[(a)] $f_0\co{t_0}(z_1)=z$,
\item[(b)] $f_0\co{t_j}(z_{j})=z_{j+1}$, for all $j=1,2,\dots, m-1$,
\item[(c)] $f_0\co{t_m}(z_m)=f_0\co{q_{n+1}b_{n+1}+q_n}(z)$,
\item[(d)] $t_j\leq q_n  b_n +q_{n-1}$, for all $j=0,1,\dots,m$. 
\end{itemize} 

Let $m:=b_{n+1}$. 
Given $z\in S^{n+1}_0$, let $\zeta:=\Psi_{n+1}^{-1}(z)\in S^0_{n+1}$ and note that $\Psi_{n}^{-1}(z)$ 
is defined and belongs to $\p_n$. First we show that there are points $\sigma_1,\sigma_2, \dots, \sigma_m$ in $S_n^0$ 
as well as positive integers $\ell_0,\ell_1,\dots, \ell_m$ such that 
\begin{itemize} 
 \setlength{\itemsep}{.2em}
\item[(a')] $f_n\co{\ell_0}(\sigma_1)= \Psi_{n}^{-1}(z)$,
\item[(b')] $f_n\co{\ell_{j}}(\sigma_j)=\sigma_{j+1}$, for all $j=1,2, \dots,m-1$,
\item[(c')] $f_n\co{\ell_m}(\sigma_m)=\psi_{n+1}(f_{n+1}\co{m}(\zeta))$,
\item[(d')] $k_n\leq\ell_j\leq b_n$, for all $j=0,1,\dots, m$.
\end{itemize}

By the definition of $S^0_{n+1}$, the iterates 
\[\zeta,f_{n+1}(\zeta),f_{n+1}\co{2}(\zeta),\dots,f_{n+1}\co{m}(\zeta)\] 
are defined and belong to $U_{n+1} \cap f_{n+1}(U_{n+1})$. 
We use Lemma~\ref{renorm} for each consecutive pair in the above 
orbit (in place of $w$ and $f_n(w)$ in that lemma) to inductively introduce the sequence 
$\sigma_1, \sigma_2,\dots, \sigma_m$ and $\ell_0,\ell_1,\dots,\ell_m$ as follows.

Lemma~\ref{renorm} applied to $\xi_1:=\psi_{n+1}(\zeta)$ produces $\xi_2\in \p_n$ and a positive 
$\ell\in \mathbb{Z}$ with 
\[\ex\circ \Phi_n(\xi_2)=f_{n+1}(\zeta), \text{  and } f_n\co{\ell}(\xi_1)=\xi_2.\] 
Now, there are  $\sigma_1\in S_n^0$ and $\ell_0\in \mathbb{Z}$ with $k_n\leq\ell_0\leq b_n$ 
such that $f_n\co{\ell_0}(\sigma_1)=\xi_1 =\Psi_n^{-1}(z)$.

As $\ex\circ \Phi_n(\xi_1)\in\Dom f_{n+1}$, we can choose a $\sigma_2\in S^0_n$ with 
$\Phi_n(\xi_1)-\Phi_n(\sigma_2)\in \mathbb{Z}$. 
Let $\ell_1$ denote the positive integer with $f_n\co{\ell_1}(\sigma_1)=\sigma_2$. 
The integer $\ell_1$ satisfies (d'). 
That is because to go from $\sigma_1$ to $\sigma_2$, one needs at least $k_n$ iterates to go from $S_n^0$ to 
$\C_n\cup\Csh_n$ and then at most $\lfloor 1/\alf_n \rfloor-\Bk-2$ iterates to reach $\sigma_2$.

Repeating the above paragraph with $\sigma_2$ which satisfies $\ex\circ \Phi_n (\sigma_2)=f_{n+1}(\zeta)$, 
one obtains $\sigma_3\in S_n^0$ and an integer $\ell_2$ with 
$k_n \leq\ell_2\leq b_n+1$, such that
$f_n\co{\ell_2}(\sigma_2)=\sigma_3$ and $\ex\circ \Phi_n(\sigma_3)=f_{n+1}\co{2}(\zeta)$. 

Repeating the above argument inductively, one obtains the sequence of pairs 
$(\sigma_4, \ell_3)$, $(\sigma_5, \ell_4)$, $\dots$, $(\sigma_{m+1}, \ell_m)$ such that
\[\ex\circ \Phi_n(\sigma_{j+1})=f_{n+1}\co{j}(\zeta),\; f_n\co{\ell_j}(\sigma_j)=\sigma_{j+1},\text{ for }j=3, 4, \dots, m.\] 
Finally, change $\ell_m$ to the positive integer $\ell$ with 
$f_n\co{\ell}(\sigma_m)= \psi_{n+1}(f_{n+1}\co{m}(\zeta))$. 
This introduces the points $\sigma_j$ and integers $\ell_j$ satisfying (a')--(d').

Now define $z_j:=\Psi_n(\sigma_j)\in S_0^n$, for $j=1,2,\dots,m$. One can see that (a')--(d') 
implies (a)--(d), using Lemma~\ref{conjugacy}. 
For example, we prove (a), (c), and the inequality for $t_0$ in (d).
\begin{align*}
z&=\Psi_n(\Psi_n^{-1}(z))\\
  &=\Psi_n(f_n\co{\ell_0}(\sigma_1))&\text{(by (a'))}\\
  &=\Psi_{n}(f_{n}\co{(\ell_0-k_{n})}\circ f_{n}\co{k_{n}}(\sigma_1))& \\
  &=f_0\co{(\ell_0-k_{n})q_{n}}\circ\Psi_{n}(f_{n}\co{k_{n}}(\sigma_1))&\text{(by Lemma~\ref{conjugacy}-$1$)}\\ 
  &=f_0\co{(\ell_0-k_{n})q_{n}}\circ f_{0}\co{(k_{n}q_{n}+q_{n-1})}(\Psi_n(\sigma_1))\qquad\qquad&\text{(by Lemma~\ref{conjugacy}-$2$)}\\
  &=f_0\co{(\ell_0 q_n + q_{n-1})}(z_1),&
\end{align*} 
Let $t_0:=\ell_0 q_n + q_{n-1}$, and note that as $\ell_0\leq b_n$, $t_0 $ satisfies the inequality in (d).

Similarly, (c) follows from the following equalities. 
\begin{align*} 
\Psi_n(\psi_{n+1}(f_{n+1}\co{m}&(\zeta))) & \\
&=\Psi_{n+1}(f_{n+1}\co{m}(\zeta))&\\
&=\Psi_{n+1}(f_{n+1}\co{(m-k_{n+1})}\circ f_{n+1}\co{k_{n+1}}(\zeta))& \\
&=f_0\co{(m-k_{n+1})q_{n+1}}\circ\Psi_{n+1}(f_{n+1}\co{k_{n+1}}(\zeta))&\text{(by Lemma~\ref{conjugacy}-$1$)}\\ 
&=f_0\co{(m-k_{n+1})q_{n+1}}\circ f_{0}\co{k_{n+1}q_{n+1}+q_n}(\Psi_{n+1}(\zeta))&\text{(by Lemma~\ref{conjugacy}-$2$)}\\
&=f_0\co{q_{n+1} b_{n+1}+q_n}(z), &\\ 
\end{align*}
and
\begin{align*} 
\Psi_n (f_n\co{\ell_m}(\sigma_m)) &=\Psi_n (f_n\co{\ell_m-k_n} \circ f_n\co{k_n}(\sigma_m)) &\\
&=f_0\co{(\ell_m-k_n)q_n} \circ \Psi_n(f_n\co{k_n}(\sigma_m)) & \text{(by Lemma~\ref{conjugacy}-$1$)} \\
&=f_0\co{(\ell_m-k_n)q_n} \circ f_0\co{k_nq_n+q_{n-1}}(z_m) & \text{(by Lemma~\ref{conjugacy}-$2$)}\\
&=f_0\co{t_m}(z_m). &  \text{(with $t_m:=\ell_m q_n+q_{n-1}$)}
\end{align*}

It remains to show that $\partial \Omega^{n+1}_0 \subset \daron (\Omega^n_0)$. 
First we claim that for all $n\geq 1$, $0\in \daron \Omega_0^n$. 

By the definition of the sectors, for every $n\geq 1$ there is $\eps_n>0$ such that for every $x_n\in B(0,\eps_n)$, there is 
$x_n'\in S_n^0$, and a non-negative integer $s_n\leq b_n -1$ 
with $f_n\co{s_n}(x_n')=x_n$. 
In particular, $B(0,\eps_n)\subset \Omega_n^0$, $\forall n\geq 0$. 
Fix $n\geq 1$. 
For $x_0$ sufficiently close to zero we may obtain a sequence of points $x_j\in B(0, \eps_j)$, $x_j'\in S_j^0$, 
non-negative integers $s_j\leq b_j-1$ such that $f_j\co{s_j}(x_j')=x_j$, $\ex \circ \Phi_j(x_j')=x_{j+1}$, 
for all $j=0,1,\dots, n-1$.
Now, by the definition of renormalization, $\Psi_n(x_n')\in S_0^n$ is mapped to $x_0$ under some iterate of $f_0$. 
To bound the number of iterates needed, let $N(s_0, s_1, \dots , s_n)$ denote the resulting number 
of iterates of $f_0$ for given $s_0, s_1, \dots, s_n$. 
By the upper bound on each $s_j$, we have 
\begin{multline*}
N(s_0, s_1, \dots, s_n) \leq N(0, s_1+1, s_2, s_3, \dots, s_n)
\leq N(0,0, s_2+1, s_3, s_4, \dots, s_n)\\
\leq  \dots \leq N(0,0,\dots,0, s_n+1)= q_n b_n+q_{n-1}
\end{multline*}      
This implies that $x_0\in \Omega_0^n$ and hence, finishes the proof of the claim.

Let $z'\neq 0$ belong to $\partial \Omega^{n+1}_0$. 
To show that $z\in \daron \Omega_0^n$ we continue to use the notations of the earlier arguments. 
There exists $z\neq 0$ in $\partial S^{n+1}_0$ with $f_0\co{t}(z)=z'$, for some \text{non-negative} $t\in \mathbb{Z}$. 
Hence, $\zeta=\Psi_{n+1}^{-1}(z)$ belongs to $\partial S^0_{n+1}$. 
On the other hand, the closure of $S^0_{n+1}$ is contained in $U_{n+1}\cap f_{n+1}(U_{n+1})$.  
But, for the point $\xi = \psi_{n+1}(\zeta)$, $\sigma_1$ may belong to the boundary of $S_n^0$ 
(i.e.\ $\xi \notin \daron S_n^0$). 
To rectify the problem, we slightly ``thicken'' the set $S_n^0$ on the left side. 
That is, there is an open set $\hat{S}_n^0$ such that, the closure of $\hat{S}_n^0$ intersects $S_n^0$, 
$f_n(\hat{S}_n^0)\subset \daron S_n^0$, $\ex\circ \Phi_n(\hat{S}_n^0)\subset f_{n+1}(U_{n+1})$, and 
$\sigma_j \in \daron (\hat{S}_n^0\cup S_n^0)$, for $j=1,2, \dots, m$.    
Now, one uses the open mapping property of holomorphic and anti-holomorphic maps to see that 
$z_i\in \daron(S_0^n\cup \Psi_{n}(\hat{S}^0_n))$, for all $i$.    
Note that since $\hat{S}_n^0 \subset \Omega_n^0$ and 
$f_{n}(\hat{S}_n^0)\subset \daron S_n^0$, $f_0\co{j}(\Psi_{n}(\hat{S}_n^0))$ is define and contained in 
$\daron (\Omega_n^0)$ for all $j$ with $0\leq j\leq q_n b_n+q_{n-1}$. 
By the open mapping property of $f_0$, this implies that those forward iterates of $z_i$  are contained in 
$\daron (\Omega_0^n)$.    
\smallskip

{\em Part (2):}
Clearly, $f(0)=0\in \Omega_0^n$.
Let $z$ be an arbitrary point in $\Omega_0^{n+1}\setminus 0$. 
By the previous part, $z\in \Omega_0^n$. 
If $z \in \Omega_0^n$ is not in the last sector $f_0\co{q_n b_n+q_{n-1}}(S_0^n)$, 
then $f_0(z)$ is defined and belongs to $\Omega_0^n$, by definition.   

Assume that $z\neq 0$ belongs to the last sector of the union $\Omega^n_0$. 
By Lemma~\ref{conjugacy}, the last sector of the union $\Omega_0^n$ is the image of the last sector in the 
union $\Omega_n^0$ under the map $\Psi_n$. 
That is, $\Psi_n^{-1}(z)$ is defined and belongs to $f_n\co{b_n}(S_n^0)\subset \mathcal{P}_n$. 
On the other hand, we claim that $z\in \Omega_0^{n+1} \cap \Psi_n(\p_n)$ implies 
$\ex \circ \Phi_n\circ \Psi_n^{-1}(z)\in \Omega_{n+1}^0\subseteq \Dom(f_{n+1})$. 
Assuming the claim for a moment, combining the two statements, we have 
$\Psi_n^{-1}(z) \in \cup_{l=0}^{k_n-1}f_n\co{l}(S_n^0)$. 
By Lemma~\ref{conjugacy}, this implies that $z\in \cup_{l=0}^{k_n-1}f_0\co{(l q_n)}(S_0^n)$. 
Then, $f_0(z)$ is defined and belongs to $\Omega_0^n$, by the definition of $\Omega_0^n$.

Now we prove the claim. 
Recall the domain $\Omega_n^1$. 
By Lemma~\ref{conjugacy}, the iterates in the union $\Omega_0^{n+1}$ are obtained from the iterates of 
$S_n^1$ on level $n$ to form $\Omega_n^1$. 
In particular, the iterates within $\Omega_0^{n+1} \cap \Psi_n(\mathcal{P}_n)$ are obtained from the 
iterates of $S_n^1$ that lie in $\mathcal{P}_n$.  
Recall that by our choice of the branch of $\psi_{n+1}$ in Section~\ref{sec:sectors-introduced}, 
$S_n^1$ is to the left of the last sector in $\Omega_n^0$. 
Therefore, if $z\in \Omega_0^{n+1} \cap \Psi_n(\p_n)$ and also $z$ belongs to the last sector in $\Omega_0^n$, 
then 
\[\Psi_n^{-1}(z)\in \bigcup_{l=0}^{b_{n+1}} \;\;
\bigcup_{j=0}^{\lfloor 1/\alf_n\rfloor -\Bk -2} f_n\co{\lfloor 1/\alf_n\rfloor l+j} (S_n^1)\cap \p_n.\] 
The set $S_n^1$, and all its consecutive iterates by $f_n$ which lie on $\mathcal{P}_n$, project 
under $\ex \circ \Phi_n$ to the set $S_{n+1}^0$. 
By the definition of renormalization, see also Lemma~\ref{renorm}, 
the iterate of $S_n^1$ that has returned to $\mathcal{P}_n$ after leaving $\mathcal{P}_n$, projects 
under $\ex \circ \Phi_n$ to the set $f_{n+1} (S_{n+1}^0)$. 
Repeating this argument, one concludes that the above union projects under $\ex \circ \Phi_n$ to the 
set $\Omega_{n+1}^0$. 

\smallskip

{\em Part (3):} 
Recall that for every $n\geq 1$, $f_n\colon S_n^0\ra f_n\co{k_n}(S_n^0)$ has a critical point. 
Thus, by Lemma~\ref{conjugacy}-$2$,  
$f_0\co{(k_nq_n+q_{n-1})}\colon  S_0^n\ra \Psi_n(f_n\co{k_n}(S_n^0))$ must also have a critical point. 
Therefore, the critical point of $f_0$ belongs to $\Omega_0^n$, for $n\geq 1$. 
On the other hand, by Part 2, $f_0$ can be iterated infinitely many times on $\cap_{n\geq 1}\Omega_0^n$, 
with values in this intersection. 
Now, the result follows from Part 1.
\end{proof}

By a lemma of Lyubich \cite{Ly83b}, for a rational map $f\colon \RS \ra \RS$, with $J(f)\neq \RS$, and any open set 
$V$ containing the closure of the orbits of the critical values of $f$, 
the orbit of Lebesgue almost every $z\in J(f)$ eventually stays in $V$. 
Combined with Proposition~\ref{neighbor}, the orbit of almost every point in the Julia set of $Q_\alf$, $\alf\in \irr$, 
eventually stays in every $\Omega_0^n$. 

\begin{propo}\label{visit}
For every $\alpha\in \irr$ and every $f_0 \in \QIS_\alpha$ we have the following. 
\begin{itemize}
\item[(1)]When $f_0=Q_\alpha$, for every $n\geq 0$, every integer $\ell$ with $0\leq \ell\leq q_n b_n+q_{n-1}$, 
and almost every $z\in J(Q_\alpha)$, $\orb(z) \cap Q_\alpha \co{\ell}(S_0^n)\neq \emptyset$. 
\item[(2)]For every $n\geq 0$, every integer $\ell$ with $0\leq \ell\leq q_n b_n+q_{n-1}$, 
and every $z \in \pc(f_0)$, $\orb(z) \cap f_0\co{\ell}(S_0^n)\neq \emptyset$.
\end{itemize}
\end{propo}

\begin{proof}
Let $f_0=Q_\alpha$. 
It is enough to prove the proposition for $\ell=0$. 
We claim that for every $n\geq0$, 
\[\left \{z\in J(f_0)\mid \orb(z)\cap \Omega_0^{n+2}\neq \emptyset \right\} \subseteq 
\left \{z\in J(f_0)\mid \orb(z)\cap S_0^n\neq \emptyset \right\}.\]

Assuming the claim for a moment, as the left hand set has full Lebesgue measure by the above paragraph and 
Proposition~\ref{neighbor}, we can conclude Part 1 of the proposition for $\ell=0$.

To prove the claim, let $z$ be an arbitrary point in $J$ with $f_0\co{t_1}(z)\in \Omega^{n+2}_0$ for some 
integer $t_1\geq 0$. 
Choose $t_2\geq t_1$ with $f_0\co{t_2}(z)$ in the last sector $f_0\co{j}(S_0^{n+2})$, 
with $j=q_{n+2} b_{n+2}+q_{n+1}$.
The point $\zeta:=\Psi_{n+2}^{-1}(f_0\co{t_2}(z))\in \p_{n+2}$, and hence $f_{n+2}(\zeta)$ is defined. 
By Lemma~\ref{renorm}, this implies that $\zeta':=\psi_{n+2}(\zeta)$ can be iterated at least two times under $f_{n+1}$. 
That is, $\zeta'$ and $f_{n+1}(\zeta')$ belong to $U_{n+1}$. 
Now, Lemma~\ref{renorm} applied to $\zeta'':=\psi_{n+1}(\zeta')$ implies that there is an orbit 
$\zeta'', f_n(\zeta''), \dots , f_n\co{\ell}(\zeta'')$, with $\ex \circ \Phi_n(f_n\co{\ell}(\zeta''))=f_{n+1}(\zeta')\in U_{n+1}$. 
This implies that there exists a positive integer $\ell'$ with $f_n\co{\ell'}(\zeta'')\in S_n^0$. 
Now, using Lemma~\ref{conjugacy}, $f_0\co{\ell''}(\Psi_n(\zeta''))=f_0\co{\ell''}(z)\in S_0^n$, 
for some positive integer $\ell''$.  
This finishes the proof of the first part. 

By Proposition~\ref{neighbor}, $\pc(f_0)$ is contained in the left hand set in the above equation.    
This implies Part 2 of the lemma. 
Indeed, the proof of Part 2 of the proposition is already present in the proof of Proposition~\ref{neighbor}. 
\end{proof}
\section{Upper bound on  the sizes of linearization domains}\label{sec:acc}
\subsection{Approaches of the critical orbit to the fixed point}
In this section, we estimate the size of a sector (roughly the smallest one) in each union $\Omega^n_0$ in terms of a  
partial sum of the Brjuno series introduced in the Introduction. 
The main technical tool is stated in the next two lemmas. 
They will be proved in Section~\ref{sec:geometry-petals}, once we establish some estimates on the 
Fatou coordinates.

Let $f$ be a map in $\QIS_\alf$, with $\alf \in (0, r_3]$. 
Recall the domain $\p_f$ defined in Proposition~\ref{Ino-Shi1},  the constant $\Bk$ in 
Proposition~\ref{P:uniformly-bounded-width-spiral}, as well as the sector $S_f$ and the constant $k_f$ defined in 
Section~\ref{sec:npr}. 
Moreover, if the rotation of $\rr(f)$ at $0$ belongs to $(0, r_3]$, then $\p_{\rr(f)}$, $\Phi_{\rr(f)}$, and $\psi_{\rr (f)}$ 
are also defined, where $\psi_{\rr(f)}: \p_{\rr(f)} \to \p_f$ is the change of coordinate defined in Section~\ref{renorm}. 

\begin{propo}\label{P:smallsector} 
There is $M_1\geq 1$ such that for all $\alf\in (0, r_3]$ and all $f\in \QIS_\alf$, there exists 
$\eta(f)$ in the set $\{k_f, k_f+1, \dots, \lfloor 1/(2\alf)\rfloor+k_f\}$ 
such that
\[\diam (f\co{\eta(f)}(S_f)) \leq M_1 \alf,  \text{ and } f\co{\eta(f)}(S_f)\subseteq \p_f.\]
\end{propo}

Recall the constant $N$ defined at the end of Section~\ref{Inou-Shishikura-class}. 
That is, $\alf\in \irr$ guarantees that every $h\in \QIS_\alf$ is infinitely near-parabolic renormalizable. 

\begin{propo}\label{P:change of coord}
There is $M_2\geq 1$ such that for all $\alf\in \irr$ and all $f\in \QIS_\alf$, there exists 
$\kappa(f)$ in the set $\{0,1, \dots , \lfloor 1/(2\alf)\rfloor\}$ such that 
\begin{itemize}
\item[(1)] $f\co{ \kappa(f)}\circ \psi_{\rr(f)}(\p_{\rr(f)})\subseteq \p_f$, 
\item[(2)] $\forall w\in \p_{\rr(f)}$, $|f\co{\kappa(f)}\circ \psi_{\rr(f)}(w)|\leq M_2\alf | w | ^\alf.$
\end{itemize}
\end{propo}

Assume $\alf \in \irr$ and $f_0\in \QIS_\alf$. 
By Theorem~\ref{Ino-Shi2}, the sequence of renormalizations $f_n= \rr\co{n}(f_0)$ and rotations 
$ \alf_n$ are defined for $n \geq 0$. 
In particular, we have the petals $\p_n$, the Fatou coordinates $\Phi_n$, the lift maps $\psi_n: \p_n \to \p_{n-1}$, 
and the sectors $S_n^i$ for each $f_n$. 
The latter are defined in Section~\ref{renorm}.
Applying the above Propositions to the maps $f_n$, we obtain the integers $\eta(n)= \eta(f_n)$ 
and $\kappa(n)= \kappa(f_n)$, for $n\geq 0$. 
Recall the integers $b_n$ defined in Section~\ref{sec:neighbors}.
 
\begin{propo}\label{sectorsize}
There is $M_3\in \mathbb{R}$ such that for all $\alf\in \irr$, all $f_0\in \QIS_\alf$, and all $m\geq 1$,
 there exist a non-negative integer $\nu(m)\leq q_m b_m+q_{m-1}$ with
\[\mathrm{diam} (f_0\co{\nu(m)}(S_0^m)) \leq M_3\cdot \alf_0\cdot \alf_1^{\alf_0}\cdot \alf_2^{\alf_0\alf_1}%
\cdot \alf_3^{\alf_0\alf_1\alf_2}\dots \alf_m^{\alf_0 \dots \alf_{m-1}}.\]
\end{propo}

\begin{proof}
Let $M$ be the maximum of the constants $M_1$ and $M_2$ obtain in the above two Propositions.
Given $m\geq1$, by Proposition~\ref{P:smallsector},  
\[\diam(f_m\co{\eta(m)}(S^0_m)) \leq M\cdot\alf_m,\;  f_m\co{\eta(m)}(S^0_m)\subset \p_m .\]  
Using Proposition~\ref{P:change of coord} with $n=m-1$ and $w\in f_m\co{\eta(m)}(S^0_m)$, we obtain
\begin{align*}
\diam(f_{m-1}\co{\kappa(m-1)} \circ \psi_m(f_m\co{\eta(m)}(S^0_m)))&\leq M\cdot \alf_{m-1}(\diam(f_m\co{\eta(m)}(S^0_m)))^{\alf_{m-1}}\\
&\leq M\cdot\alf_{m-1}\cdot(M\cdot\alf_m)^{\alf_{m-1}}.
\end{align*}
By Lemma~\ref{one-level}, the above relation boils down to   
\[\diam(f_{m-1}\co{\kappa(m-1)} \circ f_{m-1}\co{(\eta(m)\lfloor 1/\alf_m\rfloor +1)}(\psi_m(S^0_m)) \leq M\cdot\alf_{m-1}\cdot(M\cdot\alf_m)^{\alf_{m-1}},\] 
which is equivalent to 
\[\diam(f_{m-1}\co{(\kappa(m-1)+\eta(m)\lfloor 1/\alf_m\rfloor+1)}(S^1_{m-1}) \leq M\cdot\alf_{m-1}\cdot(M\cdot\alf_m)^{\alf_{m-1}}.\]
Again applying Proposition~\ref{P:change of coord} with $n=m-2$, the last inequality implies that 
\begin{multline*}
\diam(f_{m-2}\co{\kappa(m-2)}\circ \psi_{m-1}(f_{m-1}\co{(\kappa(m-1)+\eta(m)\lfloor 1/\alf_m\rfloor+1)}(S^1_{m-1})) \\
\quad\leq M\cdot\alf_{m-2}\cdot \big(M\cdot\alf_{m-1}\cdot(M\cdot\alf_m)^{\alf_{m-1}}\big)^{\alf_{m-2}},
\end{multline*}
which, by Lemma~\ref{conjugacy}, gives us  
\begin{multline*}
\diam(f_{m-2}\co{(\kappa(m-2)+(\kappa(m-1)+\eta(m)\lfloor1/\alf_m\rfloor+1)\lfloor1/\alf_{m-1}\rfloor+1)}(\psi_{m-1}(S^1_{m-1})))\\
\quad\leq M\cdot\alf_{m-2}\cdot(M\cdot\alf_{m-1}\cdot(M\cdot\alf_m)^{\alf_{m-1}})^{\alf_{m-2}}.
\end{multline*}

Inductively, repeating Proposition~\ref{P:change of coord} with $m-3, m-4,\dots,0$, one obtains
\begin{align*}
\diam &(f_0\co{\nu(m)}(S^{m}_0)) &\\
&\leq M\cdot\alf_0\cdot[M\cdot\alf_1[M\cdot\alf_2[\dots[M\cdot\alf_m]^{\alf_{m-1}}]^{\alf_{m-2}}\dots]^{\alf_1}]^{\alf_0}\\
%&\leq M\cdot M^{\alf_1}\cdot M^{\alf_1\alf_2}\dots M^{\alf_1\alf_2\dots \alf_{m-1}}\cdot \alf_1\cdot \alf_2^{\alf_1}\cdot %\alf_3^{\alf_1\alf_2}\cdot \alf_4^{\alf_1\alf_2\alf_3}\dots \alf_m^{\alf_1 \dots \alf_{m-1}} \\
&\leq M^{1+\alf_0 +\alf_0\alf_1+\cdots +\alf_0\alf_1 \cdots \alf_{m-1}}\cdot  \alf_0\cdot \alf_1^{\alf_0}\cdot \alf_2^{\alf_0\alf_1}\cdot \alf_3^{\alf_0\alf_1\alf_2}\dots \alf_m^{\alf_0 \dots \alf_{m-1}} \\
%&\leq M^{1+1/2+1/2^2+1/2^3+\dots}\cdot  \alf_1\cdot \alf_2^{\alf_1}\cdot \alf_3^{\alf_1\alf_2}\cdot \alf_4^{\alf_1\alf_2\alf_3}\dots %\alf_m^{\alf_1 \dots \alf_{m-1}} \\
&\leq M^4\cdot \alf_0\cdot \alf_1^{\alf_0}\cdot \alf_2^{\alf_0\alf_1}\cdot \alf_3^{\alf_0\alf_1\alf_2}\dots \alf_m^{\alf_0 \dots  \alf_{m-1}} 
\end{align*}
for some integer $\nu(m)$.  
Here we have used that $\alf_i\alf_{i+1} \leq 1/2$, for $i\geq 0$.
This finishes the proof of the estimate. 

The bound on $\nu(m)$ follows from the upper bounds on $\eta(j)$ and $\kappa(j)$ 
(see the discussion on $N(s_0, s_1, \dots, s_n)$ in the proof of Proposition~\ref{neighbor}-1).  
%Therefor, $\psi_1(f_1\co{\nu(m)}(S_1^{m-1}))$ is among the sectors in the union $\Omega^m_0$. 
\end{proof}

Let $\beta_{-1}:=1$, and $\beta_n:=\Pi_{j=0}^{n}\alf_j$, for $n\geq 0$. 
Using elementary properties of continued fractions one can show that (see \cite[Section~1.5]{Yoc95} for further details)
\begin{equation}\label{equivalence} 
\Big |\sum_{j=0}^\infty \beta_{j-1} \log \alf_{j}^{-1} - \sum_{n=0}^{\infty} \frac{\log q_{n+1}}{q_n}\Big |\leq C
\end{equation}
for some constant $C$ independent of $\alf_0\in (0,1)$.

\begin{proof}[Proof of Theorem~\ref{acc-on-fixed}] 
The proof is immediate using Proposition~\ref{visit}, Proposition~\ref{sectorsize}, and the uniform bound in 
Equation~\eqref{equivalence}.    
\end{proof}

\begin{thm}\label{non-lin}
There exists a constant $M$ such that for every $\alf \in \irr$ and every $f\in \QIS_\alf$, the conformal radius of 
the Siegel disk centered at $0$ is bounded from above by $M\exp (- \sum_{n=0}^{\infty} q_n^{-1} \log q_{n+1})$.   
\end{thm}

\begin{proof}
Recall that each $U_n=\Dom f_n$ contains a non-zero fixed point $\sigma_{f_n}$. 
By Lemma~\ref{renorm}, this fixed point lifts to a periodic point of $f_{n-1}$, whose orbit crosses 
the set $S_{n-1}^0$. 
Then by the conjugacy relations in Lemma~\ref{conjugacy} this periodic point is sent by $\Psi_{n-1}$ to a periodic point of $f_0$ 
whose orbit must cross $S_0^{n-1}=\Psi_{n-1}(S_{n-1}^0)$. 
Hence, every sector in the union $\Omega_0^{n-1}$ contains at least a point of that cycle.
Now the theorem follows from the 1/4-Theorem, Proposition~\ref{sectorsize}, and Equation~\eqref{equivalence}. 
\end{proof}

\begin{rem}
In \cite{AC12} we prove a stronger version of Proposition~\ref{sectorsize}, which is based on an infinitesimal 
estimate on the Fatou coordinates established in \cite{Ch10-II}. 
It is proved that given any neighborhood of the Siegel disk (or zero), as $n\to \infty$, the density of the number of sectors 
in $\Omega_0^n$ which are contained in that neighborhood tends to one. 
Although not all sectors are necessarily contained in such neighborhoods, surprisingly, it is also proved in \cite{AC12} that every neighborhood of the Siegel disk contains the orbit of infinitely many periodic points. 
\end{rem}

There is a large class of analytic maps of $\cc$ or $\RS$ that have a restriction which belong to the Inou-Shishikura class. 
Thus the above results apply to these maps as well. 
Here is a simple example. 
Recall the domain $U$ in \eqref{U}. 
Let $h$ be a rational map of the Riemann sphere that $h(0)=0$, $h'(0)=1$, and $h$ is univalent on the connected component of 
$h^{-1}(U)$ containing $0$.
Then the map $h\cdot(1+h)^2$ belongs to $\ff_0$.
Note that such maps may have arbitrarily large degrees.
Pre-composing these maps with rotations of angle $\alf\in \irr$, one has the bound on the conformal radius of their 
Siegel disk in the theorem, and in particular, the optimality of the Bruno condition for their linearizability.  
\section{Measure and topology of the attractor}\label{sec:measure}
Let $\alf \in \irr$ and $f_0\in \QIS_\alpha$. 
By Theorem~\ref{Ino-Shi2} in Section~\ref{SS:renormalization-def} the sequence of renormalizations 
$f_n= \rr\co{n}(f_0)$, $n\geq 0$, are defined. 
One forms the domains $\Omega_0^n$ and $\Omega_n^0$ for the map $f_0$, defined in Section~\ref{sec:neighborhood}. 
In this section we prove Theorem~\ref{pc-area}. 
The plan is to show that $\cap_{n=0}^\infty\Omega^n_0$, which contains the post-critical set by Proposition~\ref{neighbor}, 
does not contain any Lebesgue density point. 
As the proof spans over several pages, we briefly outline the argument in the next paragraph. 

In Subsection~\ref{S:Going-Down} we show that any point $z_0$ in $\cap_n \Omega_0^n$ can be 
mapped to arbitrarily deep levels of the renormalization planes using the changes of coordinates. 
Let $z_n$, for $n\geq 1$, denote the point obtained on level $n$ in this process. 
%( There are many different ways of 
%mapping $z$ to the deep levels of the renormalization tower)
In Proposition~\ref{P:good-height-lem} we show that there are infinitely many levels $n$ with $|z_n|\geq \alf_n$.
In Proposition~\ref{P:r-ball}, we state that if at some level we have $|z_n|\geq \alf_n$, 
then there exists a ball of size comparable to its distance to $z_n$ in the complement of $\pc(f_n)$. 
In Subsection~\ref{S:Going-Up} we define holomorphic maps $g_i$ from an appropriate subset $V_i$ of the 
$i$-th renormalization level to a domain $V_{i-1}$ on level $i-1$.  
The maps $g_i$, for $i=n,n-1,\dots,1$ belong to a compact class of maps and $z_i \in V_i$ is mapped to $z_{i-1}\in V_{i-1}$ 
under $g_i$. 
In Lemma~\ref{contract} we show that each $g_i$ is uniformly contracting in the respective hyperbolic metrics, and in 
Lemma~\ref{safe-steps} we show that each $g_i$ is univalent on a ball of definite hyperbolic size (independent of 
$i$ and $n$) about $z_i$. 
%(the technical arguments here depends on how one maps $z$ to the deep levels of the renormalization tower) 
The composition of these maps (from level $n$ to level $0$) sends the complimentary ball obtained in Proposition~\ref{P:r-ball} to 
the dynamic plane of $f_0$. 
By uniform contraction of the maps $g_i$, after first few iterates the image of the ball shrinks and falls in the 
neighborhood of some $z_j$ where $g_j$ is univalent, and stays in the balls where the further maps $g_i$, 
for $i=j-1,j-2,\dots,1$, are univalent.
Then, we use compactness of the class of maps containing $g_i$ and the distortion theorem to show that this 
composition provides us with a ball in the complement of $\pc(f_0)$ at a small scale near $z_0$.

\begin{figure}[ht]
\begin{center}
\includegraphics[scale=.7]{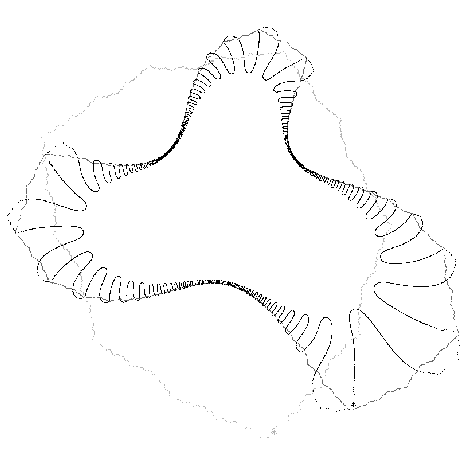}
\caption{The three curves in different colors approximate the orbit of the critical points for different
values of $\alf$. 
The light gray one is for $\alf=[0;3,1,1,1,\dots]$, the gray one for $[0;3,50,1,1,1,\dots]$, and the dark gray one 
for $[0;3,50,10^5,1,1,1,\dots]$.}
\label{post}
\end{center}
\end{figure}

\subsection{Balls in the complement at deep levels}\label{balls-in-complement}
Given $X\subseteq \cc$, let $B_\delta(X):=\cup_{x\in X} B(x, \delta)$.  

\begin{propo}\label{P:r-ball}
For all $E\in \mathbb{R}$ there are positive constants $\delta_1$, $\delta_2$, and $r^*$ satisfying the following. 
For every $\alf\in (0, r_2]$, every $f\in \QIS_\alf$, and every $\zeta\in \cc$ with 
$\Im\zeta\leq\frac{1}{2\pi}\log \alf^{-1}+E$ and $\ex(\zeta)\in \Omega^0_0(f)$, 
there exists a curve $\gamma:[0,1]\ra~\cc$, with $\gamma(0)=\zeta$, such that

\smallskip

\begin{itemize}
  \setlength{\itemsep}{.4em}
\item[(1)] $\ex\big(B_{\delta_1}\big(B(\gamma(1), r^*)\cup \gamma[0,1]\big)\big) \subseteq \Dom f \setminus\{0\}$,
\item[(2)]$\ex\big(B(\gamma(1), r^*)\big) \cap \Omega^0_0(f)=\emptyset$, 
$f\big(\ex(B(\gamma(1), r^*))\big) \cap \Omega^0_0(f)=\emptyset,$
\item[(3)] $\diam \Re \big(B_{\delta_1}\big(B(\gamma(1), r^*)\cup \gamma[0,1]\big)\Big)\leq 1-\delta_1$,
\item[(4)] $\bmod\; B_{\delta_1}\big(B(\gamma(1), r^*)\cup 
\gamma[0,1]\big)\setminus\big( B(\gamma(1), r^*)\cup \gamma[0,1]\big) \geq \delta_2$. \footnote[1]{mod denotes 
the conformal modulus of an annulus.}
\end{itemize}
\end{propo}

%The following lemma roughly describes the geometry of $\Omega_n^0$ in $\Dom f_n$. 
The proof of the above proposition appears in Section~\ref{sec:geometry-petals}. 
See Figure~\ref{complementary-Balls}. 

Recall the sets $\C_h$ (and $\C_n$ for $f_n$) introduced for the definition of renormalization. 

\begin{lem}\label{the-balls}
There exists a real constant $\delta_3< \min\{\delta_1, 1/8\}$ such that 
\begin{itemize}
\item $\forall j \in \mathbb{Z}$, $\forall n\in \mathbb{N}$, $\ex(B(j,\delta_3))\subset \daron (\C_n) \subset \Omega^0_n$,
\item $\forall n\in \mathbb{N}$, $\forall \xi\in\cc$ with $\ex(\xi)\in \Omega^0_n$, we have 
$\ex (B(\xi,\delta_3))\subset \Dom f_n$.
\end{itemize}
\end{lem}

\begin{proof} 
As each set $\C^{-i}\cup (\Csh)^{-i}$, for $i=0,1,2,\dots, k_n$, is compactly contained in  $\Dom f_n$, $\Omega_n$ is compactly 
contained in $\Dom f_n$. 
Therefore, it follows from continuous dependence of the Fatou coordinate on the map, the pre-compactness of $\ff_0$, and 
the uniform bound in Proposition~\ref{P:turning} that there exists a real constant $\delta>0$ such that
\begin{gather}\label{well-contained}
\forall n\geq 1, B(-4/27,\delta) \subset \C_n \text{ and } B_{\delta}(\Omega^0_n)\subset \Dom f_n.
\end{gather}
The first inclusion implies the first part of the lemma and the second one implies the second part of the lemma. 
\end{proof}

\subsection{Going down the renormalization tower}\label{S:Going-Down}
For every $n\geq 1$, let $\textrm{Fil}(\Omega^0_n)$ denote the set obtained from adding the bounded components of 
$\cc\setminus \Omega_n^0$ to $\Omega_n^0$, if there is any. 
For $n\geq 1$ and $j=0, 1, \dots, \lfloor \alf_n^{-1}\rfloor-\Bk-1$, let $I_{n,j}$ denote the closure of the connected component of 
\[\daron\big ( \textrm{Fil}(\Omega_n^0)\big ) \cap \Phi_n^{-1}\{j+\tfrac{1}{2}+t\Bi : t\in \mathbb{R}\}\]
landing at $0$. 
Each $I_{n,j}$ is a smooth curve in $\textrm{Fil}(\Omega^0_n)$ that connects the boundary of $\Omega^0_n$ to $0$. 
For every such $n$ and $j$, every closed loop (i.e.~homeomorphic image of a circle) contained in 
$\Omega^0_n\setminus I_{n,j}$ is contractible in $\cc\setminus\{0\}$. 
This implies that there is a continuous inverse branch of $\ex$ defined on every $\Omega^0_n\setminus I_{n,j}$.

By Proposition~\ref{P:uniformly-bounded-width-spiral}, Proposition~\ref{P:turning}, and the pre-compactness of 
$\cup_{\alf\in (0, r_3]}\ff_\alf$, there exists a 
positive integer $\Bk'$ such that 
\begin{equation}\label{k'}
\forall n\geq 1 \text{ and } \forall j \text{ with } 0\leq j < \tfrac{1}{\alf_n}-\Bk-1, 
\quad \sup_{z, z' \in \Omega^0_n \setminus I_{n,j}} |\arg(z)- \arg (z')| \leq 2\pi \Bk',
\end{equation}
for every continuous branch of argument defined on $\Omega^0_n \setminus I_{n,j}$.  
To simplify the technical details of the proof in this section, we assume the following condition on the rotations
\begin{equation}\label{k'-condition}
   N\geq 2\Bk'+\Bk+\Bk''+1.
\end{equation}

Fix an arbitrary point $z_0 \in \cap^{\infty}_{n=0}\Omega^n_0\setminus \{0\}$. 
We associate a sequence of quadruples 
\begin{align}\label{quadruples}
\langle (z_i,w_i,\zeta_i,\sigma(i))\rangle_{i=0}^\infty
\end{align}
to $z_0$, where $z_i, w_i \in \Dom f_i$, $\zeta_i \in \Phi_i(\p_i)$, and $\sigma(i)$ 
is a non-negative integer. 
This sequence shall be the trace of $z_0$ while going down the renormalization tower, and will be used to transport the 
complementary balls on level $n$, introduced in Proposition~\ref{P:r-ball}, back to the dynamic plane of $f_0$.
It is inductively defined as follows.  

Define the sets 
\begin{gather*}
\mathscr{A}_n:= \{z\in \p_n\mid \Re \Phi_n(z)\in [\Bk'+1/2,\lfloor 1/\alf_n\rfloor -\Bk], \text{ or } 
\Phi_n(z)\in \cup_{j=1}^{\Bk'}B(j,\delta_3)\}\\
 \mathscr{B}_n:= \Omega_n^0\setminus \mathscr{A}_n.
\end{gather*}

\begin{figure}
\begin{center}
\begin{pspicture}(0,1.6)(10,7)
\epsfxsize=6cm
\rput(3,4){\epsfbox{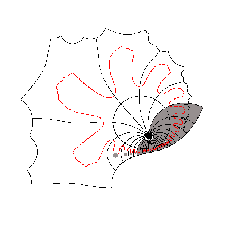}}
\psdot[dotsize=.9pt](1.8,5)
\rput(1.8,5.2){$z_0$}
\psdot[dotsize=.9pt](4.6,4.2)
\rput(4.8,4.2){$w_0$}

\psline[linearc=.04, linewidth=.3pt]{->}(4.6,4.3)(4.4,4.6)
\psline[linearc=.04, linewidth=.3pt]{->}(4.35,4.65)(4.,5.15)
\psline[linearc=.04, linewidth=.3pt]{->}(3.9,5.2)(3.2,5.4)
\psline[linearc=.04, linewidth=.3pt]{->}(3.1,5.4)(1.9,5)
\psline[linewidth=.7pt]{->}(5.,3.8)(6.5,3.8)
\rput(6,4.1){$\phi_0$}
\pspolygon[fillstyle=solid,fillcolor=LLLgray,linecolor=LLLgray](8.25,3.2)(9.6,3.2)(9.6,5.3)(8.25,5.3)
\psline[linewidth=.7pt]{->}(6.6,3.4)(9.8,3.4)
\psline[linewidth=.7pt]{->}(7.2,3.)(7.2,5.5)
\rput(8,5){$\zeta_0$}
\psdot[dotsize=1pt](8,4.7)
\psline[linewidth=.7pt]{->}(7,3.2)(5.5,1.7)
\rput(5.7,2.7){$e^{2\pi i w}$}
\psset{linecolor=LLLgray}
\qdisk(7.35,3.4){2.5pt}
\qdisk(7.6,3.4){2.5pt}
\qdisk(7.85,3.4){2.5pt}
\qdisk(8.1,3.4){2.5pt}
\rput(9.6,2.9){{\small $\frac{1}{\alf_0}-\Bk$}}
\end{pspicture}
\label{GoDown}
\caption{The two different colors correspond to the two different ways of going down the renormalization tower.
The gray part corresponds to $\mathscr{A}$ and the rest to $\mathscr{B}$.}
\end{center}
\end{figure}
For $z_0\in \mathscr{A}_0$, let $w_0:=z_0$, $\sigma(0):=0$. 
For $z_0\in \mathscr{B}_0$, let $w_0\in S_0^0\bigcap  \cap_{n\geq 1} \Omega_0^n$ and positive integer 
$\sigma(0)< k_0+\Bk'$ be such that $f_0\co{\sigma(0)}(w_0)=z_0$. 
In both cases, let $\zeta_0:=\Phi_0(w_0)$. 

Define $z_1:=\ex(\zeta_0)$. 
Since $z_0 \in \Omega^1_0$, one can see that $z_1 \in \Omega^0_1$. 
Thus, we can repeat the above process to define the quadruple $(z_1,w_1,\zeta_1,\sigma(1))$, and so on. 

In general, for every $l\geq 0$, we have 
\begin{equation}\label{sign-property}
\begin{gathered} 
z_l=\ex(\zeta_{l-1}),\;  z_l\in \Omega_l^0, 
f_l\co{\sigma(l)}(w_l)=z_l,\; \Phi_l(w_l):=\zeta_l,
0 \leq \sigma(l) < k_l+\Bk'.
\end{gathered}
\end{equation}
Note that by the definition of this sequence and condition \eqref{k'-condition}, for all $l\geq 0$ we have 
\begin{equation}\label{zetas}
\Bk'+1/2 \leq \Re \zeta_l \leq \lfloor 1/\alf_l\rfloor -\Bk, \text{ or } \zeta_l\in \cup_{j=1}^{\Bk'} B(j,\delta_3).    
\end{equation}

\begin{propo}\label{P:good-height-lem}
Assume that $z_0\in \cap_{n=0}^\infty \Omega^n_0\setminus \{0\}$ and $\alf$ is a non-Brjuno number in $\irr$. 
If $\langle\zeta_j\rangle_{j=0}^\infty$ is the sequence associated to $z_0$, there are arbitrarily large $m$ with 
\begin{equation}\label{good-height-eq}
\Im \zeta_m\leq \frac{1}{2\pi}\log \frac{1}{\alf_{m+1}}.
\end{equation} 
\end{propo}

To prove Proposition~\ref{P:good-height-lem}, we need the estimate in the following Proposition, which 
will be prove in Section~\ref{sec:metric-properties}.

\begin{propo}\label{P:height-control-lem}
There exist  positive constants $D_1$ and $D_2$ such that for all $n\geq 1$,
\begin{multline}\label{height-control}
\textrm{if } \Im \zeta_{n+1} \geq \frac{D_1}{\alf_{n+1}}, \textrm{ then }
\Im \zeta_{n+1} \leq \frac{1}{\alf_{n+1}}\Im \zeta_n-\frac {1}{2\pi \alf_{n+1}}\log \frac{1}{\alf_{n+1}}+\frac{D_2}%
{\alf_{n+1}}.
\end{multline}
\end{propo} 

\begin{proof}[Proof of Proposition~\ref{P:good-height-lem}] 
Fix an arbitrary integer $\ell\geq 1$. 
We need to show that there exists $m\geq \ell$ satisfying \eqref{good-height-eq}. 
First note that one of the following two occurs. 
\begin{itemize}
\item[$(\ast)$] There exists an integer $n_0\geq \ell$ such that for every $j\geq n_0$, we have 
$\Im \zeta_{j}\geq \frac{D_1}{\alf_{j}}$. 
\item[$(\ast\ast)$] There are infinitely many integers $j$ greater than or equal to $\ell$  with $\Im \zeta_j< \frac{D_1}{\alf_j}$. 
\end{itemize}

If $(\ast)$ holds, we can use Proposition~\ref{P:height-control-lem} at every level $j\geq n_0$. 
So, recursively using Estimate \eqref{height-control}, we obtain the following inequality for every $n>n_0$, 
\begin{multline*}\label{long-inequality}
\Im \zeta_{n} \leq \frac{1}{\alf_n\alf_{n-1}\cdots\alf_{n_0}}\Im \zeta_{n_0-1} 
-\frac{1}{2\pi\alf_n\alf_{n-1}\cdots\alf_{n_0}}\log\frac{1}{\alf_{n_0}}\\
\qquad-\frac{1}{2\pi\alf_n\alf_{n-1}\cdots\alf_{n_0+1}}\log\frac{1}{\alf_{n_0+1}}-\cdots-\frac{1}{2\pi \alf_n}\log\frac{1}{\alf_n} \\ 
+D_2\left( \frac{1}{\alf_n\alf_{n-1}\cdots \alf_{n_0}}+\frac{1}{\alf_n\alf_{n-1}\cdots\alf_{n_0-1}}+\cdots+\frac{1}{\alf_n} \right).   
\end{multline*}
Let $\beta_{-1}:=1$, and $\beta_j:=\Pi_{l=0}^j \alf_l$, for $j\geq 0$. 
If \eqref{good-height-eq} does not hold for any $m\geq \ell$, replacing $\Im \zeta_n$ by $\frac{1}{2\pi}\log\frac{1}{\alf_{n+1}}$
in the above inequality and then multiplying both sides of it by $2\pi \beta_n$ we obtain 
\begin{align*}
\sum_{j=n_0-1}^{n}\beta_j  & \log\frac{1}{\alf_{j+1}}  \\
&  \leq 2\pi\beta_{n_0-1} \Im \zeta_{n_0-1}  
+2 \pi D_2\left(\beta_{n_0-1}+\beta_{n_0}+\cdots+\beta_{n-1}\right)\\
& \leq 2\pi\beta_{n_0-1} \Im \zeta_{n_0-1}  
+2 \pi D_2\left(\beta_{n_0-1}+\beta_{n_0})( 1+ 1/2 +1/4+1/8+\cdots \right)
\end{align*}
As $n$ was an arbitrary integer, the above inequality implies $\sum_{j=n_0-1}^{\infty} \beta_j  \log\frac{1}{\alf_{j+1}}$ 
is finite. 
By Inequality~\eqref{equivalence}, this contradicts $\alf$ being non-Brjuno.  

Now assume $(\ast\ast)$ holds \footnote{We present an alternative proof, in the proof of 
Proposition~\ref{P:borrowed-lem}, for a slightly weaker conclusion that is enough for our purpose. 
More precisely, we show that if $(\ast \ast)$ holds for some $z$, then there exists a constant $E$ and 
infinitely many $m$ with $\Im \zeta_m\leq \tfrac{1}{2\pi}\log \tfrac{1}{\alf_{m+1}}+E$.}. 
Let $n_1< m_2\leq n_2<m_3\leq n_3<\cdots$ be an increasing sequence of 
positive integers with the following properties (the case that some $n_i=\infty$ is easier and follows from the following argument)
\begin{itemize}
\item for every integer $j$ with $m_i\leq j \leq n_i$, we have $\Im \zeta_{j}< \frac{D_1}{\alf_{j}}$ 
\item for every integer $j$ with $n_i<j<m_{i+1}$, we have $\Im \zeta_j\geq \frac{D_1}{\alf_j}$.   
\end{itemize}
Assuming \eqref{good-height-eq} does not hold for any $m\geq \ell$, and recursively using \eqref{height-control} for  
$n$ in $\{n_i,n_i+1,\dots, m_{i+1}-2\}$, we obtain the following inequality for every $i\geq 2$.
\begin{eqnarray*}
\sum_{j=n_i}^{m_{i+1}-1}\beta_j\log\frac{1}{\alf_{j+1}} \leq 2\pi\beta_{n_i} \Im \zeta_{n_i}+%
2\pi D_2\left(\beta_{n_i}+\beta_{n_i+1}+\cdots+\beta_{m_{i+1}-2}\right). 
\end{eqnarray*}
Hence,
\begin{align*}
\sum_{j=m_2}^{\infty} \beta_j\log\frac{1}{\alf_{j+1}}&=\sum_{i=2}^{\infty}\Big(\sum_{j=m_i}^{n_i-1}\beta_j\log%
\frac{1}{\alf_{j+1}}+\sum_{j=n_i}^{m_{i+1}-1} \beta_j\log\frac{1}{\alf_{j+1}}\Big)\\
&\leq \sum_{i=2}^{\infty}\Big(\sum_{j=m_i}^{n_i-1} \beta_j\log\frac{1}{\alf_{j+1}}+2\pi \beta_{n_i}\Im \zeta_{n_i}+%
2\pi D_2 \sum_{j=n_i}^{m_{i+1}-2} \beta_j\Big). \\
&\leq \sum_{i=2}^{\infty}\Big( 2\pi D_1 (\beta_{m_i-1}+\beta_{m_i})+ 2\pi D_1 \beta_{n_i-1}+2\pi D_2 (\beta_{n_i}
+\beta_{n_i+1}) \Big)
\end{align*}
In the second inequality above, we used the assumption 
$\frac{1}{2\pi}\log\frac{1}{\alf_{j+1}}< \Im \zeta_j< \frac{D_1}{\alf_j}$, for $j\in\{m_i, m_i+1,\dots, n_i\}$.
On the other hand, since $\alf_i \alf_{i+1} \leq 1/2$, for $i\geq 0$, we have $\sum_{i=0}^\infty \beta_i \leq 3$.
Thus, the last line of the above inequalities is uniformly bounded from above, contradicting $\alf$ 
being a non-Brjuno number.
\end{proof}

\subsection{Going up the renormalization tower}\label{S:Going-Up}
Assume that $\Im \zeta_n\leq \frac{1}{2\pi}\log \frac{1}{\alf_{n+1}}$ holds for some positive integer $n$. 
We may use Proposition~\ref{P:r-ball} with $E=0$ and the map $f_{n+1}$, to obtain a curve $\gamma_n$ and a 
ball $B(\gamma_n(1), r^*)$ that projects under $\ex$ into $\mathbb{C} \setminus \Omega_0^0(f_{n+1})$. 
Let us define the set 
\[V_{n+1}:= B_{\delta_1}\big(B(\gamma_n(1), r^*)\cup \gamma_n[0,1]\big).\]
We shall define domains $V_n, V_{n-1}, \ldots, V_1$, a holomorphic map $g_{n+1}$, and anti-holomorphic maps 
$g_{n}, g_{n-1},\ldots, g_1$ as in diagram  
\begin{equation}\label{chain}
\xymatrix@1{ 
V_{n+1}\ar[r]^-{g_{n+1}} & V_n \ar[r]^-{g_n} & V_{n-1} \ar[r]^-{g_{n-1}} & \cdots \ar[r]^-{g_2} & V_1 \ar[r]^-{g_1} &
V_0:=B_1 (\Omega_0^0),                                            
}
\end{equation}
satisfying
\begin{itemize}
\item[--]for all $i=1,2,\dots,n$, $V_i=\Omega^0_i\setminus I_{i,j(i)}$ for some $j(i)\in\{0,1,\dots,\lfloor 1/\alf_i\rfloor-\Bk-1\}$;
\item[--]for all $i=1,2,\dots, n+1$, $g_i:V_i\ra V_{i-1}$; for all $i=0, 1, \dots, n$, $z_i \in V_i$;   
\item[--] $g_{n+1}(\zeta_n)=z_n$; and for all $i=1,2,\ldots ,n$, $g_i(z_i)=z_{i-1}$. 
\end{itemize}

We use an inverse inductive process to define the pairs $(g_{i+1},V_i)$, starting with $i=n$ and ending with $i=0$.

\smallskip

{\em Base step $i=n$:} 
Recall that $\zeta_n\in V_{n+1}$ satisfies \eqref{zetas}. 
As $\diam(\Re V_{n+1})\leq 1-\delta_1$, and $\delta_3< \delta_1$, there exists an integer $j\in \{0,1\}$ such that 
\[\Re(V_{n+1}-j) \subset (0,\alf_n^{-1}-\Bk).\]
With this choice of $j$, we define $g_{n+1}: V_{n+1} \ra \cc$ as  
\begin{equation*}
g_{n+1}(\zeta):=f_n\co{(j+\sigma(n))}(\Phi_n^{-1}(\zeta-j)).
\end{equation*}
By Proposition~\ref{P:r-ball}-1, $\ex(V_{n+1}-j)$ is contained in $\Dom f_{n+1}$. 
So, Lemma~\ref{renorm}, combined with Equations~\eqref{k'-condition} and \eqref{sign-property}, 
imply that $f_n\co{(j+\sigma(n))}$ is defined on $\Phi_n^{-1}(V_{n+1}-j)$. 
Indeed, we have $g_{n+1}(V_{n+1}) \subset \Omega^0_n$.  

Since $g_{n+1}(V_{n+1})$ intersects at most $\sigma(n)+1\leq \Bk'+k_n$ of the curves $I_{n,j}$, 
there exists $j(n)\in \{0,1,\dots ,\lfloor 1/\alf_n\rfloor-\Bk-1\}$ with $g_{n+1}(V_{n+1})\cap I_{n,j(n)}=\emptyset$. 
We define $V_n:=\Omega^0_n\setminus I_{n,j(n)}$. 

Finally, by the equivariance property of $\Phi_n$,  
\[g_{n+1}(\zeta_n)=f_n\co{(j+\sigma(n))}(\Phi_n^{-1}(\zeta_n-j))=f_n\co{\sigma(n)}(w_n)=z_n.\]

{\em Induction step:} Assume that $(g_{i+1},V_i)$ is defined and we want to define $(g_i,V_{i-1})$. 
As every closed loop in $V_{i}$  is contractible in $\cc\setminus\{0\}$, there exists an inverse branch $\eta_i$ of $\ex$ 
defined on $V_i$ with $\eta_i(z_i)=\zeta_{i-1}$. 
Now we consider two separate cases.
\begin{description}
\item[$\mathscr{R}$] $\Re (\eta_i(V_i)) \subset [1/2,\infty)$,
\item[$\mathscr{L}$] $\Re (\eta_i(V_i))\cap (-\infty, 1/2) \neq \emptyset$.
\end{description}

\smallskip

{\em If  $\mathscr{R}$ occurs:} 
Since $\zeta_{i-1}\in \eta_i(V_i)$ satisfies \eqref{zetas}, and $\diam (\Re B_{\delta_3}(\eta_i(V_i)))\leq \Bk'+1/4$ 
by Equation \eqref{k'} and $\delta_3<1/8$, there exists an integer $j\in \{0,1, \dots, \Bk'+1\}$ with 
\begin{equation}\label{well-inside}
B_{\delta_3}(\eta_i(V_i))-j \subset \{\xi\in \cc :  3/8 \leq \Re \xi \leq \lfloor 1/\alf_{i-1}\rfloor -\Bk \}.
\end{equation} 
By Lemma~\ref{the-balls}, $\ex (B_{\delta_3}(\eta_i(V_i))) \subset \Dom f_i$. 
Thus, Lemma~\ref{renorm} and Inequality \eqref{k'-condition} imply that $f_{i-1}^{j+\sigma(i-1)}$ is defined 
on $\Phi_{i-1}^{-1}(B_{\delta_3}(\eta_i(V_i))-j)$. 
Define $\widetilde{g}_{i}$ on $B_{\delta_3}(\eta_i(V_i))$ as  
\begin{align}\label{psi-tilde}
\widetilde{g}_{i}(\zeta):=f_{i-1}\co{(j+\sigma(i-1))}(\Phi_{i-1}^{-1}(\zeta-j)),
\end{align}
and let 
\[g_i:= \widetilde{g}_i \circ \eta_i.\]

One can see that $\widetilde{g}_i(B_{\delta_3}(\eta_i(V_i)))$ intersects at most $\Bk'+k_{i-1}+1$ of the curves $I_{i-1, j}$, for 
$j=0,1,\dots , \lfloor 1/\alf_{i-1}\rfloor -\Bk-1$.
Hence, by Equation \eqref{k'-condition}, there is $j(i-1)$ in that set with 
$\widetilde{g}_i(B_{\delta_3}(\eta_i(V_i))\cap I_{i-1,j(i-1)}=\emptyset$.
Now, we define $V_{i-1}:=\Omega^0_{i-1}\setminus I_{i-1,j(i-1)}$ so that 
\begin{equation}\label{well-inside-1}
\widetilde{g}_i(B_{\delta_3}(\eta_i(V_i)))\subset V_{i-1}.
\end{equation}

Finally, by the equivariance property of $\Phi_{i-1}$, we have   
\[g_i(z_i)=f_{i-1}\co{(j+\sigma(i-1))}(\Phi_{i-1}^{-1}(\eta_i(z_i)-j))=f_{i-1}\co{\sigma(i-1)}(w_{i-1})=z_{i-1}.\]

\smallskip

{\em If  $\mathscr{L}$ occurs:} 
Here, because $\diam(\Re \eta_i(V_i))\leq \Bk'$ and $\zeta_{i-1} \in \eta_i(V_i)$ satisfies \eqref{zetas}, 
we must have $\zeta_{i-1}\in B(j,\delta_3)$ for some $j$ in $\{1,2,\dots,\Bk'\}$.
Therefore, by Lemma~\ref{the-balls} (and $\C_i\subset V_i$, $\delta_3<1/8$), $\eta_i(V_i)\supseteq B(j, \delta_3).$ 
This implies that 
\[\eta_i(V_i)\cap B_{\delta_3}(\{0,-1,-2,\dots,-\Bk'\})=\emptyset,\]
or equivalently 
\begin{equation}\label{away-from-danger}
B_{\delta_3}(\eta_i(V_i)) \cap \{0,-1,-2,\dots,-\Bk' \}=\emptyset. 
\end{equation} 
Now, we extend $\Phi_{i-1}: \p_{i-1}\to \cc$ over a larger domain, using the dynamics of $f_{i-1}$, so that a unique branch of 
$\Phi_{i-1}^{-1}$ is defined on $B_{\delta_3}(\eta_i(V_i))$.

Recall the sectors $\C_{i-1}^{-j}\cup(\Csh_{i-1})^{-j}$, for $1\leq j\leq k_{i-1}$, and 
$S^0_{i-1}=\C_{i-1}^{-k_{i-1}}\cup(\Csh_{i-1})^{-k_{i-1}}$ 
introduced  for the definition of renormalization (of $f_{i-1}$). 
If $k_{i-1}<\Bk'+1$, using \eqref{k'-condition}, one can consider further pre-images for 
$j=k_{i-1}+1,\dots,\Bk'+1$ as  
\begin{gather*}
\C_{i-1}^{-j}:=\Phi_{i-1}^{-1}(\Phi_{i-1}(\C_{i-1}^{-k_{i-1}})-(j-k_{i-1})), \\
(\Csh_{i-1})^{-j}:=\Phi_{i-1}^{-1}(\Phi_{i-1}((\Csh_{i-1})^{-k_{i-1}})-(j-k_{i-1})). 
\end{gather*}
Let $\D_{i-1}:=\C_{i-1}^{-\Bk'-1}\cup(\Csh_{i-1})^{-\Bk'-1}$, and observe that 
$f_{i-1}\co{(\Bk'+1)}:\D_{i-1} \ra \C_{i-1}\cup \Csh_{i-1}$.
For $i\geq 1$, define the set
\[\p_{i-1}^\natural:=\bigcup_{j=0}^{\Bk'} f_{i-1}\co{j}(\D_{i-1}).\]   
Define $\Phi_{i-1}^\natural:\p_{i-1}^\natural\ra \cc$ as follows.
For $z\in \p_{i-1}^\natural$, there is an integer $j$ with $ 0\leq j\leq \Bk'+1$ and $f_{i-1}\co{j}(z)\in \p_{i-1}$. 
Let 
\[\Phi_{i-1}^\natural(z):=\Phi_{i-1}(f_{i-1}^j(z))-j.\] 
As $\Phi_{i-1}$ satisfies the Abel functional equation on $\p_{i-1}$, one can see that $\Phi_{i-1}^\natural$ is independent of 
the choice of $j$ and hence, defines a holomorphic map on $\p_{i-1}^\natural$. 
The map $\Phi_{i-1}^\natural$ is not univalent. 
However, it still satisfies the Abel Functional equation on $\p_{i-1}^\natural$. 
Indeed, it has critical points at the critical point of $f_{i-1}$ and its $\Bk'$ pre-images within $\p_{i-1}^\natural$. 
The $\Bk'+1$ critical points of $\Phi_{i-1}^\natural$ are mapped to $0,-1,-2, \ldots,-\Bk'$.  

The map $\Phi_{i-1}^\natural$ is a natural extension of $\Phi_{i-1}$ to a multi-valued holomorphic map on 
$\p_{i-1}^\natural\cup \p_{i-1}$. 
However, the two maps 
\begin{align*}
&\Phi_{i-1}^\natural:\bigcup_{j=0}^{\Bk'} f_{i-1}\co{j}(\D_{i-1})\ra\cc,\; \Phi_{i-1}:\bigcup_{j=\Bk'+1}^%
{\lfloor1/\alf_{i-1}\rfloor +\Bk'-\Bk-1} f_{i-1}\co{j}(\D_{i-1})\ra\cc 
\end{align*}
provide a well defined holomorphic map on every $\Bk'+1$ consecutive sectors of the form $f_{i-1}^j(\D_{i-1})$. 
More precisely, for every $l$ with $0\leq l < \lfloor 1/\alf_{i-1}\rfloor-\Bk$,  
\[\Phi_{i-1}^\natural \amalg_l \Phi_{i-1}: \bigcup_{j=0}^{\Bk'} f_{i-1}\co{(l+j)}(\D_{i-1})\ra \cc\] 
is defined as
\[
\Phi_{i-1}^\natural \amalg_l \Phi_{i-1}(z):=
\begin{cases}
\Phi_{i-1}^\natural(z),   & \text{if } z\in f_{i-1}\co{j}(\D_{i-1}) \text{ with } j< \Bk'+1 ; \\
\Phi_{i-1}(z),            & \text{if } z\in f_{i-1}\co{j}(\D_{i-1}) \text{ with } j\geq \Bk'+1.
\end{cases}
\]

The set $B_{\delta_3}(\eta_i(V_i))$ has diameter strictly less than $\Bk'+1$. 
Therefore, it can intersect at most $\Bk'+1$ vertical strips of width one. 
In other words, $B_{\delta_3}(\eta_i(V_i))$ is contained in $\Bk'+1$ consecutive sets in the list 
\begin{multline*}
\Phi_{i-1}^\natural (                                          \D_{i-1}),
\Phi_{i-1}^\natural (f_{i-1}                                  (\D_{i-1})), \dots,
\Phi_{i-1}^\natural (f_{i-1}\co{\Bk'}                           (\D_{i-1})),\\
\Phi_{i-1}      (f_{i-1}\co{(\Bk'+1)}                             (\D_{i-1})),\dots, 
\Phi_{i-1}       (f_{i-1}\co{(2\Bk'+1)}(\D_{i-1})).
\end{multline*}
Thus, by the above argument about $\Phi_{i-1}^\natural \amalg_l \Phi_{i-1}$, and that every closed loop in 
$B_{\delta_3}(\eta_i(V_i))$ is contractible in the complement of the critical values of 
$\Phi_{i-1}^\natural$ by \eqref{away-from-danger}, there exists an inverse branch of this map defined on 
$B_{\delta_3}(\eta_i(V_i))$. We denote this map by $\widetilde{g}_i$ and let 
\[g_i:=\widetilde{g}_i \circ \eta_i: V_{i}\ra \Omega^0_{i-1}.\] 

One can similarly verify that $\widetilde{g}_i(B_{\delta_3}(\eta_i(V_i)))$ does not intersect some curve 
$I_{i-1,j(i-1)}$. 
We define $V_{i-1}:=\Omega^0_{i-1}\setminus I_{i-1,j(i-1)}$ and have $g_i:V_i \ra V_{i-1}$. 
Indeed, we have shown 
\begin{equation}\label{well-inside-2}
\widetilde{g}_i(B_{\delta_3}(\eta_i(V_i)))\subset V_{i-1}. 
\end{equation}
Here, $\sigma(i-1)=0$, $\Phi_{i-1}(w_{i-1})=\zeta_{i-1}$, and $w_{i-1}=z_{i-1}$. 
Hence $g_{i}(z_i)=z_{i-1}$.
This finishes the definition of the domains and maps when $\mathscr{L}$ occurs.\footnote{The chain of the domains 
and maps \eqref{chain} defined here depends on the value of $n$. 
It is likely that the last parts of the chains defined for two different values of $n$ are not identical.}
\footnote{For an alternative approach to going down and up the tower see \cite[Section 3]{Ch10-II}.}

\subsection{Safe lifts}\label{sec:safe-transport}
Each domain $V_{j}$, for $j=n+1, n, \dots, 0$, is a hyperbolic Riemann surface. 
Let $\rho_i(z)|dz|$ be the complete metric of constant curvature $-1$ on $V_i$. 
The next two lemmas are natural consequences of the definition of the chain \eqref{chain}.    

\begin{lem}\label{contract}
For every $i \in \{1, 2, \dots n\}$,  the map $g_i:(V_i, \rho_i) \ra (V_{i-1}, \rho_{i-1})$ is uniformly contracting. 
More precisely, for every $z \in V_i$, we have  
\[\rho_{i-1}(g_i(z))\cdot |g_i'(z)| \leq \delta_4 \cdot\rho_i(z),\]  
with $\delta_4:=(2\Bk'+1)/(2\Bk'+1+\delta_3)$.
\end{lem}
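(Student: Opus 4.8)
The statement to prove is Lemma~\ref{contract}: each map $g_i:(V_i,\rho_i)\to(V_{i-1},\rho_{i-1})$ is uniformly contracting with factor $\delta_3=\frac{2k'+1}{2k'+1+\delta_2}$. The strategy is the standard Schwarz–Pick comparison: $g_i$ does not surject onto $V_{i-1}$, and in fact its image leaves room (a definite amount, measured by $\delta_2$) inside $V_{i-1}$, so the contraction is strict and quantitatively controlled. First I would recall the monotonicity of the hyperbolic metric: if $W\subset V$ are hyperbolic Riemann surfaces, then $\rho_V(z)\le\rho_W(z)$ for $z\in W$, with the ratio $\rho_V/\rho_W$ bounded above by an explicit constant $<1$ once one has a definite-modulus annulus separating $W$ from $\partial V$. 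So the proof reduces to exhibiting, for each $i$, an intermediate domain $W_{i-1}$ with $g_i(V_i)\subset W_{i-1}\subset V_{i-1}$ and $\operatorname{mod}(V_{i-1}\setminus \overline{W_{i-1}})$ bounded below — or, more simply, to factoring $g_i$ through an inclusion of a slightly fattened image.

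Here is the factorization I would use, reading off the construction in \eqref{psi-tilde}–\eqref{well-inside-2}. By construction $g_i=\widetilde g_i\circ\eta_i$, where $\eta_i:V_i\to\eta_i(V_i)$ is an inverse branch of $\ex$ (a biholomorphism onto its image, hence a hyperbolic isometry), and $\widetilde g_i$ is defined not merely on $\eta_i(V_i)$ but on the $\delta_2$-neighborhood $B_{\delta_2}(\eta_i(V_i))$, with $\widetilde g_i(B_{\delta_2}(\eta_i(V_i))-j)\subset V_{i-1}$ (this is exactly \eqref{well-inside-1} in case i and \eqref{well-inside-2} in case ii; note $\widetilde g_i$ is univalent in case i and, in case ii, is an inverse branch of $\Phi_{i-1}^\natural\amalg\Phi_{i-1}$, hence again univalent on $B_{\delta_2}(\eta_i(V_i))$ since that set avoids the critical values — this is where \eqref{away-from-dangour} is used). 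Thus $g_i$ factors as
\[
V_i \xrightarrow{\ \eta_i\ } \eta_i(V_i) \hookrightarrow B_{\delta_2}(\eta_i(V_i)) \xrightarrow{\ \widetilde g_i\ } V_{i-1},
\]
where the first map is an isometry for the Poincaré metrics, the last is distance-non-increasing (Schwarz–Pick for the holomorphic map $\widetilde g_i$ between hyperbolic surfaces), and the middle inclusion is strictly contracting with a quantitative factor because $\eta_i(V_i)$ sits $\delta_2$-deep inside $B_{\delta_2}(\eta_i(V_i))$.

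It remains to make the middle step explicit. The key estimate: if $\Omega$ is a domain and $\Omega'$ is its $\delta_2$-neighborhood, and $\Omega$ has Euclidean diameter at most $D$ (here $D=2k'+1$, since $\diam\eta_i(V_i)\le k'+1/2$ so $\diam B_{\delta_2}(\eta_i(V_i))\le 2k'+1$ — actually one should take the controlling quantity to be $2k'+1$ as in the statement of $\delta_3$), then for every $z\in\Omega$ the inclusion $\Omega\hookrightarrow\Omega'$ satisfies $\rho_{\Omega'}(z)\le \frac{D}{D+\delta_2}\,\rho_\Omega(z)$. I would prove this pointwise by comparing both metrics with that of the round disk: $\rho_\Omega(z)\le \rho_{B(z,\operatorname{dist}(z,\partial\Omega))}(z)$ gives an upper bound, while $\rho_{\Omega'}(z)\ge \rho_{B(z,\operatorname{diam}\Omega')}(z)$ is false in general — instead the clean route is: any point of $\Omega$ has distance $\ge\delta_2$ from $\partial\Omega'$ beyond its distance to $\partial\Omega$, and more robustly $\operatorname{dist}(z,\partial\Omega')\ge\operatorname{dist}(z,\partial\Omega)+\delta_2$, combined with $\operatorname{dist}(z,\partial\Omega)\le D$; since $\rho(z)\asymp 1/\operatorname{dist}(z,\partial\cdot)$ only up to a factor, the clean statement uses the monotonicity $\rho_{\Omega'}\le\rho_\Omega$ together with Schwarz's lemma applied to the disk $B(z,\operatorname{dist}(z,\partial\Omega))\subset\Omega$ mapped by the inclusion into $\Omega'\supset B(z,\operatorname{dist}(z,\partial\Omega)+\delta_2)$: evaluating the Schwarz–Pick inequality for $B(z,R)\hookrightarrow B(z,R+\delta_2)$ at the center gives exactly a factor $R/(R+\delta_2)\le D/(D+\delta_2)$. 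Chaining the three steps yields $\rho_{i-1}(g_i(z))|g_i'(z)|\le \delta_3\,\rho_i(z)$ with $\delta_3=\frac{2k'+1}{2k'+1+\delta_2}<1$, as claimed.

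\textbf{Main obstacle.} The only real subtlety is bookkeeping rather than analysis: one must verify in each of the two cases (i and ii) that $\widetilde g_i$ is genuinely well-defined and univalent on the \emph{whole} fattened set $B_{\delta_2}(\eta_i(V_i))$ (not just on $\eta_i(V_i)$), and that its image lands inside $V_{i-1}$ — i.e., that the slits $I_{i-1,j'}$ were placed correctly so that $g_i(V_i)$ avoids them. These are precisely \eqref{well-inside-1} and \eqref{well-inside-2}, already established in the construction, so the proof of this lemma is short: cite those inclusions, cite that $\eta_i$ is a biholomorphism onto its image, invoke Schwarz–Pick for $\widetilde g_i$, and prove the one-line disk computation for the $\delta_2$-fattening. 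A secondary point to state carefully is that $\cala$ and every $V_i$ is indeed hyperbolic (so the Poincaré metrics exist) — this holds because each is a bounded domain in $\cc$ omitting, e.g., the point $0$ and more, so it is covered by the disk.
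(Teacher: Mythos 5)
Your factorization of $g_i$ as
\[
V_i \xrightarrow{\ \eta_i\ } \eta_i(V_i) \hookrightarrow B_{\delta_2}(\eta_i(V_i)) \xrightarrow{\ \widetilde g_i\ } V_{i-1}
\]
is exactly the paper's decomposition, and the outer two steps (non-expansion of $\eta_i$ and of $\widetilde g_i$ by Schwarz--Pick, using \eqref{well-inside-1} and \eqref{well-inside-2}) are handled the same way. The gap is in the middle step, which is the only place the quantitative factor $\delta_3$ can come from. Your ``one-line disk computation'' does not close: you get $\rho_{\Omega'}(z)\le\rho_{B(z,R+\delta_2)}(z)=\frac{R}{R+\delta_2}\,\rho_{B(z,R)}(z)$ with $R=\operatorname{dist}(z,\partial\Omega)$, but the final comparison you need, $\rho_{B(z,R)}(z)\le\rho_\Omega(z)$, is exactly backwards -- $B(z,R)\subset\Omega$ forces $\rho_{B(z,R)}(z)\ge\rho_\Omega(z)$, and by the Koebe quarter theorem the ratio can be as large as $4$. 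So your chain only yields $\rho_{\Omega'}(z)\le\frac{4R}{R+\delta_2}\rho_\Omega(z)$, which is not uniformly below $1$. (You half-noticed the problem when you discarded the bound $\rho_{\Omega'}(z)\ge\rho_{B(z,\operatorname{diam}\Omega')}(z)$, but the ``clean route'' you substituted has the same defect.)

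The paper's fix is an explicit auxiliary holomorphic map rather than a disk comparison: fixing $\zeta_0\in\eta_i(V_i)$, set
\[
H(\zeta):=\zeta+(\zeta-\zeta_0)\,\frac{\delta_2}{\zeta-\zeta_0+2k'+1}.
\]
Since $|\Re(\zeta-\zeta_0)|\le k'$ on $\eta_i(V_i)$ (this is where \eqref{k'} enters, and where the constant $2k'+1$ really comes from -- it must exceed twice the horizontal extent so that $|\zeta-\zeta_0|<|\zeta-\zeta_0+2k'+1|$), one gets $|H(\zeta)-\zeta|<\delta_2$, so $H$ maps $\eta_i(V_i)$ into $B_{\delta_2}(\eta_i(V_i))$, fixes $\zeta_0$, and has $|H'(\zeta_0)|=1+\frac{\delta_2}{2k'+1}$. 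Schwarz--Pick applied to $H$ then gives $\hat\rho_i(\zeta_0)\le\frac{2k'+1}{2k'+1+\delta_2}\,\tilde\rho_i(\zeta_0)=\delta_3\,\tilde\rho_i(\zeta_0)$ pointwise, which is the missing estimate. (A secondary slip: in {\em case i} the map $\widetilde g_i$ need not be univalent -- it may carry a simple critical point of $f_{i-1}^{j+\sigma(i-1)}$, cf.\ Lemma~\ref{safe-steps} -- but only holomorphy is needed here, so this does not affect the lemma.)
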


\begin{proof} 
Let $\tilde{\rho}_i(z)|d z|$ and $\hat{\rho}_i(z)|dz|$ denote the Poincar\'e metric on the domains $\eta_i(V_i)$ 
and $B_{\delta_3}(\eta_i(V_i))$, respectively. 
By the definition of $g_i$ and properties \eqref{well-inside-1} and \eqref{well-inside-2} we can decompose the map 
$g_i:(V_i, \rho_i) \ra (V_{i-1}, \rho_{i-1})$ as follows:
\begin{displaymath}
\xymatrix{
(V_i,\rho_i) \ar[r]^-{\eta_i} &\; (\eta_i(V_i), \tilde{\rho}_i) \ar@{^{(}->}[r]^-{inc.} &\;  
(B_{\delta_3}(\eta_i(V_i)),\hat{\rho}_i)\; \ar[r]^-{\tilde{g}_i} & \;(V_{i-1},\rho_{i-1}).}
\end{displaymath}

By Schwartz-Pick Lemma, the first map and the last map in the above chain are non-expanding. 

To show that the inclusion map is uniformly contracting in the respective metrics, fix an arbitrary point 
$\xi_0$ in $\eta_i(V_i)$, and define 
\[H(\xi):=\xi+(\xi-\xi_0)\frac{\delta_3}{(\xi-\xi_0+2\Bk'+1)}: \eta_i(V_i)\ra \cc.\]
Since $\diam \Re(\eta_i(V_i))\leq \Bk'$, we have $|\Re(\xi-\xi_0))|\leq \Bk'$ for every $\xi \in \eta_i(V_i)$. 
This implies that $|\xi-\xi_0|<|\xi-\xi_0+2\Bk'+1|$, and hence  
\begin{align*}
|H(\xi)-\xi|&= \delta_3 |\frac{\xi-\xi_0}{\xi-\xi_0+2\Bk'+1}| < \delta_3.
\end{align*}
So, $H(\xi)$ is a holomorphic map from $\eta_i(V_i)$ into  $B_{\delta_3}(\eta_i(V_i))$. 
By Schwartz-Pick Lemma, $H$ is non-expanding. 
In particular, at $H(\xi_0)=\xi_0$ we obtain  
\[\tilde{\rho}_i(\xi_0) |H'(\xi_0)|= \tilde{\rho}_i(\xi_0)(1+\frac{\delta_3}{2\Bk'+1}) \leq \hat{\rho}_i(\xi_0).\]
That is, $\tilde{\rho}_i(\xi_0)\leq \delta_4\cdot \hat{\rho}_i(\xi_0)$
with $\delta_4=(2\Bk'+1)/(2\Bk'+1+\delta_3)$.
%Putting all this together gives the inequality in the lemma.  
%\begin{align*}
%\rho_{n-1}(G_{n,i}(\zeta)) | G_{n,i}'(\zeta)| & \leq  \delta_3 \rho(G_{n,i}(\zeta)) |G_{n,i}'(\zeta)|\\
%& \leq \delta_3 \rho_n(\zeta)
%\end{align*}
\end{proof}

\begin{lem}\label{safe-steps}
There exists a constant $\delta_5>0$ independent of $n$ such that   
\begin{itemize}
\item[(1)]each $g_i:V_i\ra V_{i-1}$, for $i=1, 2, \dots, n+1$, is either one-to-one or has only one simple critical point;
\item[(2)]each $g_i:V_i\ra V_{i-1}$, for $i=1, 2, \dots, n$, is one-to-one on the hyperbolic ball 
\begin{equation*}
B_{\rho_i}(z_i,\delta_5):=\{z\in V_i\mid d_{\rho_i}(z,z_i)< \delta_5\}.
\end{equation*}
\end{itemize}
\end{lem}

\begin{proof}
{\em Part (1):} 
Each map $g_i$ is a composition of at most four maps; $\eta_i$ (this does not appear for $g_{n+1}$), 
a translation by an integer $j$,  $\Phi_{i-1}^{-1}$, and $f_{i-1}\co{(j+\sigma(i-1))}$. 
The first three maps are one-to-one. 
The map $f_{i-1}\co{(j+\sigma(i-1))}$ on $\Phi_{i-1}^{-1}(\eta_i(V_i)-j)$ is either one-to-one or has at most one 
simple critical point. 
To see this, first note that the relevant critical points of $f_{i-1}\co{(j+\sigma(i-1))}$ within $\Omega_{i-1}^0$ are contained in    
$\cup_{l=0}^{j+\sigma(i-1)} \{f_{i-1}^{-l}(\cp_{f_{i-1}})\}$, which are all non-degenerate. 
If $\Phi_{i-1}^{-1}(\eta_i(V_i)-j)$ contains more than one element in the above list, by the equivariance property of 
$\Phi_{i-1}$, there must be a pair of points $\xi$, $\xi+m$ (for some integer $m\neq0$) in $\eta_i(V_i)-j$. 
As this set is contained in the lift of a simply connected domain under $\ex$, that is not possible.  

\medskip

{\em Part (2):} The proof  is broken into four small steps.

\smallskip

{\em Step 1.} If $g_i$ has a critical value, then it belongs to $\cup_{l=0}^{j+\sigma(i-1)-1} \{f_{i-1}\co{l}(-4/27)\}$,
where $j$ is the  integer defined in case $\mathscr{R}$ of the inductive construction\footnote{When $\sigma(i-1)+j=0$,  
we define the set to be empty.}.

\smallskip

Looking back at the inductive process, the map $g_i$ introduced in $\mathscr{L}$ is univalent, 
hence, we only need to look at maps considered in $\mathscr{R}$. 
By the definition \eqref{psi-tilde}, if $g_{i-1}$ has a critical value, it must belong to the above set.

\smallskip

Let $\cv_{g_{i-1}}$ denote the critical value of $g_{i-1}$, if it exists. 

\smallskip

{\em Step 2.} If $\cv_{g_{i-1}}=f_{i-1}\co{l}(-4/27)= \Phi_{i-1}^{-1}(l+1)$, for some $l$ with $0\leq l \leq \sigma(i-1)+j-1$, then 
$z_{i-1}\notin \Phi_{i-1}^{-1}(B(l+1, \delta_3))$.
 
To see this, we refer to the definition of quadruples \eqref{quadruples}. 
If $z_{i-1}\in \mathscr{A}_{i-1}$, recall that $\sigma(i-1)=0$ and $z_{i-1}=\Phi_{i-1}^{-1}(\zeta_{i-1})$.
By the above step, and since $j\leq \Bk'+1$, we have    
$\cv_{g_{i-1}}\in \cup_{l=0}^{\Bk'} \{f_{i-1}\co{l}(-4/27)\}$.  
Now, if $\Re \zeta_{i-1}\geq \Bk'+1/2$, then 
$z_{i-1}$ can not belong to $\cup_{l=1}^{\Bk'} \Phi_{i-1}^{-1}(B(l,\delta_3))$. 
And if $\zeta_{i-1}\in \cup_{l=1}^{\Bk'} B(l,\delta_3)$, we have $j=0$. 
By Step 1, $g_{i-1}$ has no critical value.   

When $z_{i-1}\in \mathscr{B}_{i-1}$, by definition, $z_{i-1} \notin \cup_{l=0}^{\Bk'}\Phi_{i-1}^{-1}(B(l,\delta_3))$.

\smallskip

{\em Step 3.} There exists a real constant $\delta>0$ such that $B_{\rho_{i-1}}(z_{i-1}, \delta)$
is simply connected and does not contain $\cv_{g_i}$.

By steps 1 and 2, it is enough to show that there exists a $\delta>0$ such that for every $l\in \{1,2,\dots, 2\Bk'+1\}$ 
and every $i\leq n$, $B_{\rho_{i-1}}(\Phi_{i-1}^{-1}(l),\delta)$ is simply connected and is contained in 
$\Phi_{i-1}^{-1}(B(l,\delta_3))$. 
Recall that $\Phi_{i-1}$ is univalent on $\{\xi\in \cc ; \Re \xi\in (0, \alf_{i-1}^{-1}-\Bk)\}$. 
As the balls $B(l,\delta_3)$ and the segments $\{s\cdot l+(1-s)(1/8-2\Bi), s\in (0,1)\}$, for $l=0, 1, \Bk'$, 
are well contained in this strip, it follows from the distortion theorem that there are constants $M_1$ and $M_2$ 
such that 
\begin{gather*}
\Phi_{i-1}^{-1}(B(l,\delta_3))\supset B(\Phi_{i-1}^{-1}(l), M_1\cdot (\Phi_{i-1}^{-1})'(l)),\\
\forall z\in \Phi_{i-1}^{-1}(B(l,\delta_3)), d(z, \partial V_{i-1})\leq M_2\cdot  (\Phi_{i-1}^{-1})'(l).
\end{gather*}
For the second line of the above equations we have used the Koebe distortion theorem on the segment 
$\{s\cdot l+(1-s)(1/8-2\Bi), s\in (0,1)\}$. 
As $\rho_{i-1}(\cdot)$ is comparable to $1/d(\cdot, \partial V_{i-1})$, one infers that 
$\Phi_{i-1}^{-1}(B(l,\delta_3))$ contains a round hyperbolic ball of radius uniformly bounded from below. 

\smallskip

{\em Step 4.} part (2) of the lemma holds for $\delta_5:=\delta$. 

By the contraction of $g_i$ (Lemma~\ref{contract}), $g_i (B_{\rho_i}(z_i,\delta_5))$ is contained 
in the ball $B_{\rho_{i-1}}(z_{i-1}, \delta)$. 
As $B_{\rho_{i-1}}(z_{i-1},\delta)$ is simply connected and does not contain any critical value of $g_i$, one can find 
a univalent inverse branch of $g_i$ defined on this ball. 
Therefore, $g_i$ is one-to-one on the ball $B_{\rho_i}(z_i,\delta_5)$.   
\end{proof}

To avoid unnecessary details in Section~\ref{S:Going-Up} we assumed that the constant $E=0$. 
Clearly, one may repeat the construction line by line for a nonzero value of the constant $E$. 

Let $\G_n$ denote the map 
\[\G_n:=g_1\circ g_2 \dots  \circ g_{n+1}:V_{n+1}\ra \Omega^0_0.\] 
Recall the curve $\gamma_n$ obtained in Proposition~\ref{P:r-ball} for the map $f_{n+1}$. 
We have $\gamma_n(0)=\zeta_n$. 
By the properties of the chain~\eqref{chain}, $\G_n(\gamma_n(0))=z_0$. 

\begin{lem}\label{safe-trip} 
For all $E\in \mathbb{R}$, there exists a constant $D_3>0$ such that for every $n\geq 1$ 
satisfying $\Im \zeta_n\leq (2\pi)^{-1} \log (1/\alf_{n+1})+E$, there exists $r_n\in (0, +\infty)$ such that   
\begin{enumerate}\label{transfer}
  \setlength{\itemsep}{.3em}
\item[(1)] $\G_n(B(\gamma_n(1),r^*)) \cap \Omega^{n+1}_0=\emptyset$; 
\item[(2)] $B(\G_n(\gamma_n(1)),r_n)\subset\G_n(B(\gamma_n(1),r^*))$, and $|\G_n(\gamma_n(1))-z_0|\leq D_3\cdot r_n$;
\item[(3)] $r_n\leq D_3\cdot (\delta_4)^n$.
\end{enumerate}
\end{lem}

\begin{proof}
{\em Part (1):} By Proposition~\ref{P:r-ball}-2, for every $\zeta\in B(\gamma_n(1),r^*)$ we have 
\[\ex(\zeta)\notin\Omega^0_{n+1}, \text{ and } f_{n+1}(\ex(\zeta))\notin \Omega^0_{n+1}.\]
We claim that this implies 
\[g_{n+1}(\zeta)\notin \Omega^1_n, \text{ and } f_n(g_{n+1}(\zeta))\notin \Omega^1_n.\]
It follows from the definition of the renormalization (see proof of Lemma~\ref{renorm}) that since 
$\ex(\zeta)\notin \Omega^0_{n+1}$, then $\Phi_n^{-1}(\zeta)\notin \Omega^1_n$. 
Also from $f_{n+1}(\ex(\zeta))\notin \Omega_{n+1}^0$, it follows that $f_n^{j}(\Phi_n^{-1}(\zeta))\notin \Omega^1_n$, 
for $j=0,1,2,\dots, b_n+1$. 
In particular, by \eqref{k'-condition}, \eqref{sign-property}, and $j\leq \Bk'+1$,   $g_{n+1}(\zeta)$ and $f_n(g_{n+1}(\zeta))$ 
are not in $\Omega^1_n$. 

The same argument implies the following statement for every $i=n,n-1,\dots,1$.
\begin{multline*}
\text{For all $w\in V_i$, if } w \notin\Omega^{n-i+1}_i \text{ and } f_i(w)\notin \Omega^{n-i+1}_i,\\
\text{then } g_i(w)\notin\Omega^{n-i+2}_{i-1} \text{ and } f_{i-1}(g_i(w))\notin \Omega^{n-i+2}_{i-1}
\end{multline*} 
By an inductive argument, one infers from these that $\G_n(z)\notin \Omega^0_{n+1}$.

\medskip

{\em Part (2):} It follows from Proposition~\ref{P:r-ball}-4 that there exists a constant  $C$ such that 
$B\,(\gamma_n(1),r^*)\cup \gamma_n[0,1]$ has hyperbolic diameter (with respect to $\rho_{n+1}$ in $V_{n+1}$) less than $C$.
Let $m$ denote the smallest non-negative integer with 
\[C \cdot (\delta_4)^{m}\leq \delta_5/2.\] 
Note that $m$ is uniformly bounded from above independent of $n$.
We decompose the map $\G_n$ into two maps as follows
\[\G_n^l:=g_{n-m+1} \circ g_{n-m+2}\circ\cdots\circ g_{n+1}\; \text{ and }\; \G_n^u:=g_1\circ g_{2}\circ\cdots
\circ g_{n-m}.\]
By Lemma~\ref{contract} and our choice of $m$, we have
\[\G_n^l(B(\gamma_n(1),r^*)\cup \gamma_n[0,1])\subseteq B_{\rho_{n-m}}(z_{n-m},\delta_5/2).\]
Since by Lemmas~\ref{contract} and \ref{safe-steps} each $g_i$, for $i=n-m, n-m-1,\dots, 1$, is univalent and uniformly 
contracting on $B_{\rho_i}(z_i,\delta_5)$, we conclude that $\G_n^u$ is univalent on $B_{\rho_{n-m}}(z_{n-m},\delta_5)$. 
Thus, by the distortion theorem, $\G_n^u$ has bounded distortion on 
$\G_n^l(B(\gamma_n(1),r^*)\cup \gamma_n[0,1])$. 

We claim that $\G_n^l$ belongs to a pre-compact class of maps. 
That is because it is a composition of $m$ maps $g_i$, for $i=n+1,\dots, n-m+1$, where each of these 
maps is a composition of two maps as $g_i=\widetilde{g}_{i}\circ \eta_i$.  
The map $\eta_i$ is univalent on $V_i$ and, by the distortion theorem, has uniformly bounded distortion on 
sets of bounded hyperbolic diameter. 
The map $\widetilde{g}_{i}$ extends over the larger set $B_{\delta_3}(\eta_i(V_i))$, by \eqref{well-inside-1} 
and \eqref{well-inside-2}. 
So, it belongs to a compact class. 
(Indeed, $f_i\co{(\sigma(i)+j)}$ is a uniformly bounded number (by Proposition~\ref{P:turning}, \eqref{sign-property}, and 
$j\leq \Bk'+1$) of iterates of a map in the pre-compact class $\cup_{(0, r_3]} \ff_\alf$). 

Putting all these together, one infers that there exists a constant $C'$ such that
\begin{align*}
 |\G_n(\gamma_n(1))- z_0|&=  |\G_n (\gamma_n(1))- \G_n(\gamma_n(0))|\\
                                                       & \leq C'\cdot \diam (\G_n(B(\gamma_n(1), r^*)). 
\end{align*}
Also, $\G_n(B(\gamma_n(1),r^*))$ contains a round ball of Euclidean radius comparable to the diameter of 
$\G_n(B(\gamma_n(1),r^*))$. 

\medskip

{\em Part (3):} 
The domain $\G_n(B(\gamma_n(1),r^*))$ is contained in $\Omega^0_0$ which is compactly contained in $V_0$.
Thus, the Euclidean and the hyperbolic (with respect to $\rho_0$) metrics are comparable on $\Omega_0^0$. 
Now, the uniform contraction with respect to the hyperbolic metric  in Lemma~\ref{contract} implies the claim.
\end{proof}

\subsection{Semi-continuity of the post-critical set}

\begin{propo}\label{P:nearby-holes}
Let $\alf  \in  \irr$ and $f_0  \in \QIS_{\alf}$. 
Assume that for some non-zero $z$ in $\cap_{n=0}^\infty \Omega_0^n(f_0)$ there are infinitely many 
distinct positive integers $m$ for which Inequality \eqref{good-height-eq} holds. 
Then $\cap_{n=0}^\infty \Omega_0^n(f_0)$  is non-uniformly porous at $z$. 
\end{propo}

\begin{proof}
By the assumption on $\alf$, $f_0$ is infinitely near-parabolic renormalizable and hence we may define 
$f_n=\rr \co{n} f_0$, $n\geq 0$. 
Let $n_i$, $i\geq 0$, be an increasing sequence of positive integers for which Inequality~\ref{good-height-eq} holds.  
Applying Proposition~\ref{P:r-ball} to the maps $f_{n_i+1}$ there are curves $\gamma_{n_i}$ and balls $B(\gamma_{n_i}(1),r^*)$ 
enjoying the properties in that lemma. 
The maps $\G_{n_i}$, by Lemma~\ref{transfer}, provide us with a sequence of balls 
$B(\G_{n_i}(\gamma_{n_i}(1)),r_{n_i})$ satisfying 
\[B(\G_{n_i}(\gamma_{n_i}(1)),r_{n_i})\cap \Omega^0_{n_i+1}=\emptyset,\; |\G_{n_i}(\gamma_{n_i}(1))-z_0|\leq D_3\cdot r_{n_i}, 
\; r_{n_i}\ra 0. \]   
This finishes the proof of the proposition. 
\end{proof}

\begin{proof}[Proof  of Theorem~\ref{pc-area}] 
Let $z_0 \in \pc(f_0)\setminus \{0\}$. 
By Proposition~\ref{neighbor}, $z_0 \in \cap_{n=0}^\infty\Omega^n_0\setminus \{0\}$. 
Thus, we can define the sequence of quadruples~\eqref{quadruples}. 
Since $\alf$ is a non-Brjuno number, Proposition~\ref{P:good-height-lem} provides us with an 
strictly increasing sequence of integers $n_i$ for which we have 
inequality~\eqref{good-height-eq} with $m=n_i$. 
By Proposition~\ref{P:nearby-holes}, $\pc(f_0)$ is non-uniformly porous at $z_0$. 
This implies that $z_0$ is not a Lebesgue density point of $\pc(f_0)$, and hence, 
by the Lebesgue density theorem, $\pc(f_0)$ must have zero area.
Indeed, continuing the notations in the proof of Proposition~\ref{P:nearby-holes}, for $s_i:=r_{n_i}+D_3\cdot r_{n_i}$, we have 

\begin{equation*}
\frac{\area\, (B(z_0,s_i))\cap \pc(f_0))}{\area\,(B(z_0,s_i))}\leq
\frac{\pi (s_i)^2 -\pi(r_{n_i})^2} {\pi (s_i)^2} 
\leq \frac{(D_3)^2+2D_3} {(D_3)^2+2D_3+1} <1.  
\footnote{The proof does not imply that $\pc(f_0)\setminus \{0\}$ is porous (shallow), i.e.\ at every scale around 
a point in $\pc(f_0)\setminus\{0\}$ there is a disk of comparable radius in the complement of $\pc(f_0)$.     
Indeed, it seems that in Proposition~\ref{P:good-height-lem}, given  any increasing sequence of positive integers 
$\langle n_i\rangle$ one can find a non-Brjuno $\alf$ and $z\in \pc(P_\alf)$ such that Inequality~\eqref{good-height-eq} 
holds only at levels $n_i$.
Hence, the scales obtained in the above proof may shrink to zero arbitrarily fast.}
\end{equation*}
\end{proof}

\begin{proof}[Proof of Corollary~\ref{non-recurrent}]
By the argument before Proposition~\ref{visit}, the orbit of almost every point in the Julia set eventually stays in $\Omega_0^n$, 
for $n\geq 0$. 
This implies that almost every point in the complement of $\Omega^n_0$, for $n\geq 0$, is non-recurrent. 
As $\area\, \Omega^n_0$ shrinks to zero, almost every point in the Julia set must be non-recurrent.    
The second part follows from the first part and Poincar\'e recurrence Theorem.
\end{proof}

\begin{propo} \label{like-rotation-propo}
There exist $M>0$ and $\mu<1$ such that for all $\alf\in \irr$, all $f\in \QIS_\alf $, all $n\geq 1$, and 
all $z\in\Omega^{n+1}_0$ 
we have 
\[ |f\co{q_n}(z)-z| \leq M\cdot  \mu^n.\]  
In particular this holds on the post-critical set. 
\end{propo}

\begin{proof}
The result basically follows from the uniform contraction in Lemma~\ref{contract}, but, since we are not concerned 
with the distortions of the maps here, one may go down the tower in a simpler fashion. 
We briefly outline the procedure here and leave further details to the reader.  

Let $f_0:=f$, and $f_i:=\rr\co{i}(f)$, for $i\geq 1$. 
Given $z_0\in \Omega_0^{n+1}\setminus \{0\}$, inductively define the sequence of points 
$w_i,\zeta_i, z_{i+1}$ and non-negative integers $\sigma_i$, for $i=0,1,\dots, n$, according to the following rules. 
When $z_i\in \mathscr{A}_i':=\cup_{j=k_i+\Bk'}^{b_i}f_i\co{j}(S_i^0)$, then 
$w_i:=z_i$, and $\sigma_i:=0$. 
When $z_i \in (\Omega_i^0\setminus \mathscr{A}_i')$, one may choose $w_i\in (S_i^0 \cap \Omega_i^{n+1-i})$ and 
$\sigma_i\in \{1,2, \dots, k_i+\Bk'-1\}$ so that $f_i\co{\sigma_i}(w_i)=z_i$. 
The existence of such $w_i$ follows from the choice of the inverse branch $\psi_{i+1}$ in \eqref{WidthofLift} (that is, 
$\sup \Re \Phi_i\circ \psi_{i+1}(\p_{i+1})\leq  \inf \Re \Phi_i (S_i^0)$). 
However, $w_i$ is not necessarily unique. 
In both cases, $\zeta_i:=\Phi_i(w_i)$, $z_{i+1}:=\ex(\zeta_i)$. 

The last point $z_{n+1}\in \Omega_{n+1}^0$. 
By Lemma~\ref{the-balls}, we may choose a curve $\gamma_n:[0,1]\to \cc$ with $\gamma_n(0)=\zeta_n$, 
$\gamma_n(1)=\zeta_n+1$, and $\ex (B_{\delta_3}(\gamma_n)) \subset \Dom f_{n+1}$. 
By the distortion theorem, $\gamma_n$ may be chosen to have uniformly bounded Euclidean length, 
independent of $n$. 
Define $V_{n+1}:= B_{\delta_3} (\gamma_n\cup (\gamma_n-1))$, and note that the hyperbolic 
distance between $\zeta_n$ and $\zeta_n+1$ within $V_{n+1}$ is uniformly bounded from above, 
independent of $n$. 

We have $(V_{n+1}-1)\subset\Phi_n(\Dom \Phi_n)$, and by Lemma~\ref{renorm} and Equation~\eqref{k'-condition},
$f_n$ may be iterated $k_n+\Bk'$ times on $\Phi_n^{-1} (V_{n+1}-1)$.  
Define $g_{n+1}(\zeta):=f_n\co {\sigma_n+1} \circ \Phi_n^{-1}(\zeta-1)$ on $V_{n+1}$, and as in the 
previous argument, choose $j(n)$ in $\{0, 1, \dots, \lfloor 1/\alf_n\rfloor -\Bk-1\}$ such that 
$g_{n+1}(V_{n+1})\subset \Omega_n^0\setminus I_{n, j(n)}$. 
Let $V_{n}:= \Omega_n^0\setminus I_{n, j(n)}$. 

We have $\Re \zeta_i\in [\Bk'+1/2,\lfloor 1/\alf_i\rfloor -\Bk-1/2]$, for $i=0,1,\dots, n$.
Repeating only case $\mathscr{R}$ of the construction in Section~\ref{S:Going-Up}, 
one inductively defines the pairs $(g_{i+1}, V_i)$, for $i=n-1,n-2, \dots, 1$, such that 
$g_{i+1}:=f_{i}\co{\sigma_{i}+j}\circ \Phi_{i}^{-1} \circ (\eta_{i+1}-j)$,
for some $j\in \{0, 1,\dots, \Bk'\}$ with $\Re (\eta_i(V_{i+1})-j)\subset (0, \lfloor 1/ \alf_i\rfloor -\Bk)$. 
Moreover, $j\in \{0,1,\dots ,\lfloor 1/\alf_i\rfloor-\Bk-1\}$ is chosen so that $V_i=\Omega_i^0\setminus I_{i, j(i)}$
contains $f_i\co{\sigma_i+j}\circ \Phi_i^{-1}(B_{\delta_3}(\eta_i(V_{i+1}))-j)$. 

The composition of these maps, denoted by $\G_n$, satisfies $\G_n(\zeta_n)=z_0$. 
We claim that $\G_n(\zeta_n+1)= f_0\co{q_n}(z_0)$. 
To see this, first note that $\Phi_n^{-1}(B(\zeta_n-1,\delta_3)) \subset \p_n'$ and hence by Lemma~\ref{conjugacy}, 
$\Psi_n \circ f_n \circ \Phi_n^{-1}=f_0\co{q_n}\circ \Psi_n \circ \Phi_n^{-1}$ on $B(\zeta_n-1,\delta_3)$.   
On the other hand, by the definition of renormalization, one can see that 
$\G_n=f_0\co{s} \circ \Psi_n \circ \Phi_n^{-1}$ on $V_{n+1}$, for some non-negative integer $s$. 
The integer $s$ is non-negative because of the choices of the branches of $\psi_i$ in \eqref{WidthofLift} (that is,    
$\Re \Phi_i(\psi_{i+1} (\p_{i+1}'))\leq \Bk'+1$). 
Then, at every point $\xi\in B(\zeta_n-1,\delta_3)$ we have  
\begin{multline*}
f_0\co{q_n}(\G_n(\xi))
= f_0\co{q_n} \circ f_0\co{s} \circ \Psi_n \circ \Phi_n^{-1} (\xi) 
= f_0\co{s} \circ f_0\co{q_n} \circ \Psi_n \circ \Phi_n^{-1}(\xi)\\
=f_0\co{s} \circ \Psi_n \circ f_n\circ  \Phi_n^{-1}(\xi)
=f_0\co{s} \circ \Psi_n \circ \Phi_n^{-1}(\xi+1)
=\G_n(\xi+1). o
\end{multline*}
Since $V_{n+1}$ is connected and the above equation holds on $B(\zeta_n-1, \delta_3)\subset V_{n+1}$, 
it must hold on $V_{n+1}$. 
In particular, $\G_n(\zeta_n+1)=f_0\co{q_n}(\G_n(\zeta_n))$.
Now, by the uniform contraction of the maps $g_i$, one concludes the result.   
\end{proof}

Recall that $\Delta(f)$ denotes the Siegel disk of $f\in \QIS_\alf$ centered at $0$, provided it exists.
Theorem~\ref{thm:connectivity-Siegel-boundary} is a special case of the following proposition. 

\begin{propo} \label{equal-to-PC}
For all $\alf\in \irr$ and all $f\in  \QIS_\alf$, the following properties hold.
\begin{itemize}
\item[(1)]If $\alf$ is a non-Brjuno number, then $\pc(f)=\cap_{n=0}^\infty \Omega_0^n$.
\item[(2)]If $\alf$ is a Brjuno number, then 
\begin{itemize}
\item[a)] $\daron (\cap_{n=0}^\infty \Omega_0^n)=\Delta(f)$,
\item[b)] $\pc(f)=\cap_{n=0}^\infty \Omega_0^n\setminus \Delta(f)$, 
in particular, $\partial \Delta(f)\subseteq \pc(f)$.
\end{itemize}
\item[(3)]$\pc(f)$ is a connected set.
\end{itemize}
\end{propo}

The connectivity of $\pc(P_\alpha)$ for irrational values of $\alpha$ follows from the main result of \cite{Chi08}, although  
it is not directly stated in that paper. 
Indeed, it is proved that $\pc(P_\alpha)$ is equal to the closure of a class of hedgehogs of $P_\alpha$. 
By definition, each hedgehog of $P_\alpha$ is a connected sets. 
See also \cite{Blo10} . 
However, proving the connectivity of the post-critical set in the greater generality of the class $\ff$ is particularly important 
when studying the applications of near-parabolic renormalization scheme. 
See for instance Lemma 4.5 in \cite{CC13}. 

To prove the above proposition we need the next two Propositions, which shall be proved in 
Section~\ref{sec:metric-properties}. 

\begin{propo}\label{P:borrowed-lem}
There exists $E\in \mathbb{R}$ such that for all Brjuno $\alf\in \irr$, all $f$ in $\QIS_\alf$, and all $z$ in 
$\cap_{n=0}^\infty \Omega_0^n \setminus \overline{\Delta(f)}$, there are infinitely many positive integers 
$m$ with $\Im \zeta_m \leq \tfrac{1}{2\pi} \log \alf_{m+1}^{-1}+E$. 
\end{propo}

The set $\cap_{n=0}^\infty \Omega_0^n \setminus \overline{\Delta(f)}$ may be empty for some values of $\alf$, 
in which case the statement of the above lemma is void.

\begin{propo}\label{P:lift-of-integers}
For all $E \in \mathbb{R}$ there is $\delta_6>0$, such that 
for all $n \geq 1$, and all $\zeta\in \ex^{-1}(\Omega_{n+1}^0)$ with $\Im\zeta\leq\frac{1}{2\pi}\log \alf_{n+1}^{-1}+E$, 
there is $\zeta'\in  \ex^{-1}\circ \Phi_{n+1}^{-1}(\{1,2,\dots, \lfloor 1/(2\alf_{n+1})\rfloor\})$ with 
\[|\Re (\zeta'-\zeta)|\leq 1/2, \; d(\zeta', \zeta) \leq \delta_6.\]
\end{propo}

Combining Proposition~\ref{P:nearby-holes} and Proposition~\ref{P:borrowed-lem}, we obtain the following corollary. 

\begin{cor}\label{C:thin-hairs}
Let $\alf$ be a Brjuno number in $\irr$ and $f\in \QIS_\alf$. 
Then, the set $\cap_{n=0}^\infty \Omega_0^n \setminus \overline{\Delta(f_\alf)}$ is non-uniformly porous. 
In particular,  $\cap_{n=0}^\infty \Omega_0^n \setminus \Delta(f_\alf)$ has empty interior. 
\end{cor}

\begin{proof}[Proof of Proposition~\ref{equal-to-PC}] 
Let $f_0:=f$ and $f_n:= \rr^n(f_0)$, for $n\geq 1$. 
Also, $\alf_n$ denotes the asymptotic rotation of $f_n$ at $0$, for $n\geq 0$. 

\medskip

{\em Part (1):}
Fix a point $z_0 \in \cap_{n=0}^\infty \Omega_0^n\setminus \{0\}$, and recall the sequence of quadruples 
$\langle (z_i, w_i, \zeta_i, \sigma_i)\rangle_{i=0}^\infty$ introduced in the proof of 
Proposition~\ref{like-rotation-propo}.
Proposition~\ref{P:good-height-lem} applies to the sequence $\zeta_i$ as well and provides an increasing 
sequence of positive integers $n_i$ satisfying $\Im \zeta_{n_i}\leq \frac{1}{2\pi}\log \alf_{n_i+1}^{-1}$. 
One uses Proposition~\ref{P:lift-of-integers} (with $E=0$) to find 
\[\zeta'_{n_i} \in \ex^{-1}(\Phi_{n_i+1}^{-1}(\{1,2,\dots, \lfloor 1/(2\alf_{n_i+1}) \rfloor\}))\] 
enjoying the properties in the lemma. 
The two points $\zeta_{n_i}$ and $\zeta'_{n_i}$ are mapped to the dynamic plane of $f$ under the map 
$\G_{n_i}$ built in the proof of Proposition~\ref{like-rotation-propo}. 
Moreover, by the uniform contraction of the changes of coordinates, see Lemma~\ref{contract}, 
$\G_{n_i}(\zeta'_{n_i})$ converges to $z_0=\G_{n_i}(\zeta_{n_i})$, as $n_i$ tends to infinity. 

Elements of $\Phi_{n_i+1}^{-1}(\{1,2,\dots, \lfloor 1/(2\alf_{n_i+1})\rfloor\})$ 
belong to the orbit of the critical value of $f_{n_i+1}$, by the normalization of the coordinates $\Phi_{n_i}$. 
Then, $\Psi_{n_i}$ maps these elements into the orbit of the critical value of $f_0$, by the definition of renormalization.  
Moreover, $\G_{n_i}=f_0\co{s_{n_i}} \circ \Psi_{n_i}$, for some non-negative integer $s_{n_i}$ (see the proof of 
Proposition~\ref{like-rotation-propo}).
In particular, $\G_{n_i}(\zeta'_{n_i})$ belongs to the orbit of the critical value of $f_0$. 
Thus, $z_0\in \pc(f)$.

\medskip

{\em Part (2)-a:} 
Recall that by Proposition~\ref{neighbor}, the intersection $\cap_{n=0}^\infty \Omega_0^n$ is forward invariant 
under $f_0$, is compact, and is connected. 
Moreover, it contains $0$ and the point $\cp_{f_0}$ outside of $\Delta(f_0)$. 
It follows that $\overline{\Delta(f_0)} \subseteq \cap_{n=0}^\infty \Omega_0^n$, and 
$\Delta(f_0) \subseteq \daron(\cap_{n=0}^\infty \Omega_0^n)$.

On the other hand, by Corollary~\ref{C:thin-hairs} a point in $\cap_{n=0}^\infty \Omega_0^n \setminus \overline{\Delta(f)}$ 
may not be in the interior of the set $\cap_{n=0}^\infty \Omega_0^n$.
In other words, a point $z\in \daron(\cap_{n=0}^\infty \Omega_0^n)$ either belongs to $\Delta(f_0)$ or 
belongs to $\partial \Delta(f_0)$. 
We claim that the latter may not occur, and hence  
$\daron(\cap_{n=0}^\infty \Omega_0^n)\subseteq \Delta(f_0)$. 
Let $U_0$ be the connected component of $\daron(\cap_{n=0}^\infty \Omega_0^n)$ containing $0$. 
By the previous paragraph, $U_0$ contains $\Delta(f_0)$. 
If some $z$ in $\daron(\cap_{n=0}^\infty \Omega_0^n)$ belongs to $\partial \Delta(f_0)$, $U_0$ is strictly 
larger than $\Delta(f_0)$. 
Let $\hat{U}_0$ denote the filled-in set of $U_0$, and note that $f_0: \hat{U}_0\to \hat{U}_0$. 
Let $\psi:\hat{U}_0\to \mathbb{D}$ denote the uniformization of $\hat{U}_0$ by the unit disk mapping 
$0$ to $0$. 
By the Schwarz lemma, $\psi\circ f_0 \circ \psi^{-1}$ is a rotation of $\mathbb{D}$. 
That is, $f_0$ is conjugate to a ration on an strictly larger set than $\Delta(f_0)$, which contradicts the 
maximality of $\Delta(f_0)$. 

\medskip

{\em Part (2)-b:}  
As the orbit of $\cp_{f_0}$ is recurrent by Proposition~\ref{like-rotation-propo}, 
$\pc(f_0)\cap \Delta(f_0)$ is empty. 
By Proposition~\ref{neighbor}, we only need to show that 
$(\cap_{n=0}^\infty \Omega_0^n \setminus \Delta(f)) \subseteq \pc(f_0)$. 
Let $z \in \cap_{n=0}^\infty \Omega_0^n \setminus \Delta(f)$. 
By the previous part, $z \in \cap_{n=0}^\infty \Omega_0^n \setminus \daron(\cap_{n=0}^\infty \Omega_0^n)$, 
and hence, there exists a sequence $z_i\in \partial \Omega_0^i$, for $i=0,1,\dots$, converging to $z$. 
We shall show that there exists a sequence $w_i$, for $i=0,1,\dots$, in the orbit of the critical point of $f_0$ with 
$d(z_i,w_i)\to 0$, as $i$ tends to infinity. 
This proves that $z\in \pc(f_0)$.  

Recall the sets $\C_n:=\C_{f_n}$, $\C_n^{-1}, \dots, \C_n^{-k_n}$, and $\Csh_n:=\Csh_{f_n}$, $(\Csh_n)^{-1}, \dots, 
(\Csh_n)^{-k_n}$ introduced in Section~\ref{sec:sectors-introduced}.
To prove the above claim it is enough to show that for all $n\geq 1$ we have,  
\begin{itemize}
\item[(a')]$\partial\Omega_0^n\subset\bigcup_{j=0}^{q_n b_n+q_{n-1}} \overline{f_0\co{j}(\Psi_n(\C_n^{-k_n}))}$;
\item[(b')]$\forall j=0,1,\dots, q_n b_n+q_{n-1}$, $\exists l_j\geq 0$, with 
$\orb(\cv_{f_0}) \cap \overline{f_0\co{j}(\Psi_n(\C_n^{-k_n}))}\neq \emptyset$; 
\item[(c')] 
\[\lim_{n\to \infty}\sup \big \{\diam \overline{f_0\co{j}(\Psi_n(\C_n^{-k_n}))} \; \big | \;
0\leq j \leq q_ n b_n+q_{n-1}\big \}=0.\]
\end{itemize} 

{\em Proof of (a'):} Recall that by Theorem~\ref{Ino-Shi2}, $f_{n+1}(z)=P\circ \psi^{-1}(e^{2\pi \alf_{n+1}\Bi}\cdot z)$, 
for some univalent map $\psi:U\to e^{-2\pi \alf_{n+1}\Bi}\cdot \Dom (f_{n+1})$ with $\psi'(0)=1$.
The ellipse $E$ defined in  Section~\ref{Inou-Shishikura-class} is contained in $B(0,2)$, and hence, 
$U$ contains $B(0,8/9)$. 
The $1/4$-Theorem implies that $\psi (U)$ and $\Dom f_{n+1}$ contain $B(0, 2/9)$. 
Thus, 
\begin{equation}\label{E:inclusion-1}
\{w, \Im w\geq 2\}\subset \ex^{-1}(\Dom f_{n+1}).
\end{equation}
The definition of renormalization implies that 
\[\ex(\Phi_n((\Csh_n)^{-k_n}))=f_{n+1}^{-1}(B(0,\tfrac{4}{27}e^{-4\pi})).\] 
By an explicit estimate on the polynomial $P$ and the distortion theorem applied to the map 
$z\mapsto \tfrac{8}{9}\psi(\tfrac{9}{8}\cdot z)$ one concludes that 
$f_{n+1}^{-1}(B(0,\tfrac{4}{27}e^{-4\pi})) \subset B(0, \tfrac{4}{27}e^{4\pi})$. 
Therefore, $\ex (\Phi_n((\Csh_n)^{-k_n}))\subset B(0, \tfrac{4}{27}e^{4\pi})$, or in other words,  
\begin{equation}\label{E:inclusion-2}
\Phi_n((\Csh_n)^{-k_n}) \subset \{w\in \cc\mid \Im w > -2\}.
\end{equation}
Combining the inclusions \eqref{E:inclusion-1} and \eqref{E:inclusion-2}, one infers that 
$\partial \Omega_n^0 \cap \cup_{j=0}^{b_n} f_n\co{j} ((\Csh_n)^{-k_n})=\emptyset$. 
By Lemma~\ref{conjugacy}, this implies that $\partial \Omega^n_0 \cap \cup_{j=0}^{b_nq_n+q_{n-1}} f_0\co{j}
(\Psi_n(\Csh_n)^{-k_n})=\emptyset$.
By the definition of $\Omega_0^n$,  this finishes the proof of part (a).  

\medskip

{\em Proof of (b'):}
By the definition of renormalization, 
\[\ex(\Phi_n(\C_n^{-k_n}))=f_{n+1}^{-1}(B(0,\tfrac{4}{27}e^{4\pi})\setminus B(0,\tfrac{4}{27}e^{-4\pi}).\] 
We have $(P^{-1}(B(0, \tfrac{4}{27}e^{-4\pi}))\cap U)\subset B(0,\tfrac{8}{27}e^{-4\pi})$, and 
the distortion theorem applied to the map $z\mapsto \tfrac{8}{9}\psi(\tfrac{9}{8}\cdot z)$ 
implies that $\psi(B(0,\tfrac{8}{27}e^{-4\pi})) \subset B(0,1/10)$.   
Hence, $f_{n+1}^{-1}(B(0,\tfrac{4}{27}e^{-4\pi}) \subset B(0,1/10)$.  
On the other hand, by the previous part, we have $\psi(U)\supset B(0, 2/9)$. 
Combining these, we have $-4/27\in \ex (\Phi_n(\C_n^{-k_n}))$, or equivalently, 
\[\Phi_n(\C_n^{-k_n}) \cap \{1,2,\dots , \lfloor 1/\alf_n\rfloor-\Bk-1\}\neq \emptyset.\]    
This means that for $j=0,1,\dots, b_n$, $f_n\co{j}(\C_n^{-k_n}) \cap \orb (\cv_{f_n})\neq \emptyset$. 
By the definition of renormalization, $\Psi_n(\orb(\cv_{f_n})\cap \p_n)\subset \orb(\cv_{f_0})$. 
This finishes the proof of part (b).  

\medskip

{\em Proof of (c'):} 
Since $\psi$ has univalent extension onto $V$, the distortion theorem implies that there exists a constant $C$, 
independent of $n$, such that $\Dom f_{n+1}$ is contained in $B(0,C)$. 
On the other hand, $P(B(0,\tfrac{4}{54} e^{-4\pi})) \subset B(0,\tfrac{4}{27} e^{-4\pi})$ and by
$1/4$-Theorem, $\psi(B(0,\tfrac{4}{54} e^{-4\pi}))\supseteq B(0,\tfrac{1}{54} e^{-4\pi})$.  
Combining these inclusions with the first equation in the proof of part (b'), we obtain  
$\ex (\Phi_n(\C_n^{-k_n}))\subset B(0, C) \setminus B(0,\tfrac{1}{54}e^{-4\pi})$.
Thus,  
\[\Im \Phi_n(\C_n^{-k_n})\subseteq [\tfrac{1}{2\pi}\log(\tfrac{4}{27C}), 2+\tfrac{1}{2\pi}\log 8)].\]
We also have 
\[\Re \Phi_n(\C_n^{-k_n}) \subseteq [1/2,\lfloor 1/\alf_n\rfloor -\Bk-1/2], \text{ and }\diam\Re (\Phi_n(\C_n^{-k_n}))\leq\Bk'',\]
by the choice of $k_n$, Proposition~\ref{P:turning}, and condition~\eqref{k'-condition}. 

Let $V_{n+1}:= B_{\delta_3}(\Phi_n(\C_n^{-k_n}))$, where $\delta_3$ is the constant in Lemma~\ref{the-balls}. 
The above equations imply that $\Phi_n(\C_n^{-k_n})$ has uniformly bounded hyperbolic diameter in $V_{n+1}$, 
independent of $n$. 

For every $n\geq 1$ and every $j=0,1,\dots, b_nq_n+q_{n-1}$, there is a chain of maps as in \eqref{chain} that 
maps the closure of $\C_n^{-k_n}$ to the closure of $f_0\co{j}(\Psi_n(\C_n^{-k_n}))$. 
That is, given $j$, there are non-negative integers $\sigma_i\in \{0,1,\dots, b_i\}$, for $i=0,1,\dots, n$, 
such that $j$ times iterating $f_0$ on the closure of $\Psi_n(\C_n^{-k_n})$ breaks down to 
$\sigma_i$ times iterating $f_i$ on level $i$, for $i=0,1,\dots, n$, using the changes of coordinates. 
Then, one defines a chain of maps as in \eqref{chain} so that each $g_i$ is the composition of three maps 
$\eta_i$, $\Phi_i^{-1}$, and $f_i\co{\sigma_i}$, where $\eta_i$ is an appropriate inverse branch of $\ex$ and 
$\sigma_i\in \{0, 1, \dots, b_i\}$.
As we have used this argument several times before, here we leave further details to the reader. 
The uniform contraction in Lemma~\ref{contract} implies that the supremum exponentially tends to $0$. 
 
\medskip

{\em Part (3):}
Each $\Omega^0_n$, for $n\geq 0$ is a connected set. 
It is a finite union of connected sets (sectors) all containing $0$.
If $\alf$ is not a Brjuno number, by the first part, $\pc(f_0)$ is the intersection of this nest of connected sets, 
and hence is connected. 
If $\alf$ is a Brjuno number, each $\Omega_0^n$, for $n\geq 0$, contains the full set $\Delta(f_0)$ in its interior. 
Thus each $\Omega_0^n \setminus \Delta(f_0)$, for $n\geq 0$, is a connected set. 
Therefore, their intersection, which is the post-critical set by Part 2-a, is a connected set.  
\end{proof}

\begin{proof}[Proof of Theorem~\ref{thm:semicontinuous}]
Let $f_\alf$ be a continuous family of maps in $\QIS_\alf$. 
For values of $\alf$ in $\irr$, we may use the construction in Section~\ref{neighbor} to define the nest of domains 
$\Omega_0^n(f_\alf)$ for the map $f_\alf$.  
By Proposition~\ref{equal-to-PC}, $\Lambda (f_\alf) =\cap _{n=0}^\infty \Omega_0^n(f_\alf)$. 

Let us fix an arbitrary $\eps>0$. 
By the above paragraph, there is a positive integer $n$ such that $\Omega_0^n(f_\alf) \subseteq B(\Lambda(f_\alf), \eps/2)$. 

Let $I_n$ denote the set of rotations $\alf' \in (0, r_3]$ such that the first $n$ entries of the continued fraction expansion of 
$\alf'$ are equal to the corresponding entries of $\alf$. 
For all $\alf'$ in  $I_n$, $f_{\alf'}$ is $n$ times near-parabolic renormalizable. 
In particular, the sets $\Omega_0^n(f_{\alf'})$ are defined for all those $\alf'$. 

Recall that the Fatou coordinate of a map, defined in Proposition~\ref{Ino-Shi1}, depends continuously on the map. 
Then it follows from the definitions that for $\alf \in (0, r_3]$, the closures of the sets $\Omega_0^0(f_\alf)$ depend 
continuously on $\alf$ with respect to the Hausdorff distance on compact subsets of $\mathbb{C}$.  
Moreover, for $\alf'\in I_n$, the maps $\rr\co{i} (f_\alf)$, for $0\leq i \leq n$ are defined and depend continuously on $\alf'$. 
This implies that there is a neighborhood $J_n \subseteq I_n$ of $\alf$ such that for all $\alf' \in J_n$ we have 
$\Omega_0^n (f_{\alf'}) \subseteq B(\Omega_0^n (f_\alf), \eps/2)$. 

By the above paragraphs, for all $\alf' \in J_n$, we have $\Omega_0^n (f_{\alf'}) \subseteq B(\Lambda (f_\alf), \eps)$. 
For $\alf'$ in $J_n \cap \irr$, $\Lambda (f_{\alf'}) = \cap_{n=0}^\infty \Omega_0^n(f_{\alf'}) \subset \Omega_0^n(f_{\alf'})$. 
In particular, we must have $\Lambda(f_{\alf'}) \subseteq B(\Lambda (f_\alf), \eps)$. 
As $\eps $ was chosen arbitrarily, this finishes the proof of the theorem. 
\end{proof}
\section{Perturbed Fatou coordinates}\label{sec:perturbed}
In this section we analyze the perturbed Fatou coordinates. 
Our approach incorporates ideas from quasi-conformal mappings and the generalized Cauchy integral formula, although 
quasi-conformal mappings do not directly appear here. 
After writing this paper, the methods presented here have been further developed in \cite{Ch10-II} to prove an 
optimal infinitesimal estimate on the perturbed Fatou coordinates. 
This method is also employed in \cite{CC13} to prove some sharp estimates on the dependence of the Fatou coordinates 
on the linearity and non-linearity of the map. 

We shall work with the maps in the class
\[\QIS_\alf:=\ff_\alf\cup \{Q_\alf\},\]
where $\alf\in \mathbb{R}$, and $\ff_\alf$ as well as $\{Q_\alf\}$ are the sets of maps defined in 
Section~\ref{Inou-Shishikura-class}. 
The unique critical point of a map in $\QIS$ is used to normalized the Fatou coordinate, i.e.\ make it unique. 
However, most of the arguments presented here may be applied to more general maps, provided there is a special point 
that can be used to make the Fatou coordinate unique. 

\subsection{Unwrapping the map, pre-Fatou coordinate}
Recall that an element of $\ff_\alf$ is of the form  
\[h(z)=P \circ \vfi^{-1}(\ea \cdot z)\colon e^{-2\pi \alf \Bi}\cdot \vfi(U)\ra \cc,\] 
where $\alf\in \mathbb{R}$, as well as $\vfi: U\to \cc$ is a univalent map with $\vfi(0)=0$ and $\vfi'(0)=1$. 

\begin{lem}\label{L:very-basic-estimates-on-IS}
The domain $U$ contains $B(0, 8/9)$. 
Every $h\in \QIS_\alf$, with $\alf\in \mathbb{R}$, is univalent on $B(0, 4/27)$, and $|\cp_h|\in [4/27,  4/3]$. 
\end{lem}

\begin{proof}
Recall from Section~\ref{Inou-Shishikura-class} that the ellipse $E$ is contained in $B(0, 2)$. 
By a simple calculation, this implies that $U$ contains $B(0,8/9)$. 

The polynomial $P$ is univalent on $B(0,1/3)$. 
That is because, 
\[P(w_1)-P(w_2)= (w_1-w_2)((1+w_1+w_2)^2-w_1w_2),\]
while for $w_1, w_2\in B(0, 1/3)$, $\Re (1+w_1+w_2)^2> 1/9$ and $ -1/9<  \Re (w_1w_2)< 1/9$. 
Hence, for $w_1$ and $w_2$ in $B(0,1/3)$, $P(w_1)=P(w_2)$ only if $w_1=w_2$. 

The map $\psi(z)=  \tfrac{3}{2}\vfi(\tfrac{2}{3}\cdot z)$ is defined and univalent on the disk $|z|<1$. 
By the distortion Theorem~\ref{T:Distortions}, $\psi(B(0, 1/2))$ contains the ball of radius $(1/2)/(1+1/2)^2= 2/9$. 
Moreover, $2/9 \leq |\psi(-1/2)|\leq 2$.
This implies that $\vfi(B(0, 1/3))\supset B(0, 4/27)$ and $|\cp_h|= |\vfi(-1/3)|\in [4/27,  4/3]$. 
In particular, $h$ must be univalent on $B(0,4/27)$. 

The quadratic polynomial $Q_\alf$ has a critical point at $-8 e^{-2\pi \alf\Bi}/27$. 
By a calculation similar to the above one for the polynomial $P$, $Q_\alf$ is univalent on the ball $B(0, 8/27)$. 
\end{proof}

Recall from Theorem~\ref{Ino-Shi1} that every $h\in \QIS_\alf$, with $\alf\in (0, r_1]$, 
has a non-zero fixed point $\sigma_h\in \partial \p_h$. 
We will see in Lemma~\ref{cyl-cond} that on some fixed neighborhood of $0$ independent of $h$, $0$ 
and $\sigma_h$ are the only fixed points of $h$. 
Following \cite{Sh00}, we write
\begin{equation*} %\label{represent}
h(z)=z+z(z-\sigma_h)u_h(z),
\end{equation*}
with $u_h(z)$ a holomorphic function defined on $\Dom h$.  
If $0$ is a  simple fixed point of $h$, i.e.\ $h'(0)\neq 1$, $u_h(0)\neq 0$. 
Differentiating this equation at $0$, one obtains 
\begin{equation}\label{sigma}
\sigma_h=\frac{1-\ea}{u_h(0)}. 
\end{equation}

Let us define the constant 
\begin{equation}\label{E:C_1}
C_0:= \inf \{|u_h(0)| : h\in \cup_{\alf \in (0, r_1]} \ff_\alf\}.
\end{equation}

\begin{lem}\label{L:bound-on-u_h(0)}
We have $C_0>0$. 
\end{lem}

\begin{proof}
For every $\alf \in (0,r_1]$, and every $h$ in the closure of the class of maps $\ff_\alf$ 
(with respect to the uniform convergence on compact sets), $h$ has two preferred fixed points at $0$ and $\sigma_h$. 
Hence, for every such $h$, $u_h(z)$ is defined on $\Dom h$. 
Moreover, since for all those $h$, $h'(0)= e^{2\pi \Bi \alf}\neq 1$, $0$ is a simple fixed point of $h$, and so $u_h(0)\neq 0$.
This implies that for every $\alf \in (0,r_1]$, $\inf \{|u_h(0)|  ; h\in \ff_\alf\}$ is non-zero. 
On the other hand, when $\alf=0$, $\sigma_h=0$ is a double fixed point of $h\in \ff_0$, and we may define 
$u_h(z)=(h(z)-z)/z^2$. 
In particular, $u_h(0)= h''(0)/2$.
By Equation~\eqref{E:bound-on-second-derivative}, $|h''(0)|\in [2, 7]$. 
The map $h \mapsto u_h(0)$ is defined on the closure of the set of maps 
$\cup_{\alf \in [0,r_1]} \ff_\alf$, and depends continuously on $\alf$ and $h$. 
This finishes the proof of the lemma. 
\end{proof}

For the map $h=Q_\alf$, $\sigma_h=(1-\ea) \tfrac{16}{27} e^{-4\pi \alf\Bi}$. 
Define the constant 
\begin{equation}\label{E:constant-c_1}
C_1= \min \{\frac{2\pi}{C_0} , \frac{32\pi}{27}\}. 
\end{equation}
By Lemma~\ref{L:bound-on-u_h(0)} and Equation~\eqref{sigma}, for all $\alf\in (0, r_1]$ and all $h \in \QIS_\alf$, 
\begin{equation}\label{E:bound-on-sigma}
|\sigma_h| \leq C_1 \alf. 
\end{equation}

Define the map $\tau_h:\mathbb{C} \to \hat{\mathbb{C}} \setminus \{0, \sigma_h\}$, where $\hat{\mathbb{C}}$ 
denotes the Riemann sphere, by the formula
\begin{equation}\label{E:covering-formula}
\tau_h(w):=\frac{\sigma_h}{1-e^{-2\pi \Bi \alf w}}.
\end{equation} 
This is a covering transformation from $\mathbb{C}$ onto $\hat{\mathbb{C}} \setminus \{0, \sigma_h\}$ with the 
deck transformation group generated by the map $w \mapsto w+1/\alf$, that is, $\tau_h(w+1/\alf)=\tau_h(w)$. 
Moreover, we have $\tau_h (w)\to 0$ as $\Im w \to +\infty$, and $\tau_h (w) \to \sigma_h$ as $\Im w \ra -\infty$.

For $\alf \in (0, r_1]$ and $R\in (0, +\infty)$, define the set 
\[\Theta_\alf(R):=\cc\setminus\cup_{n \in \mathbb{Z}} B(n/\alf,R).\]

\begin{lem}\label{strip-image} We have 
\begin{itemize}
\item[(1)] for all $\delta>0$, all $\alf\in (0, \min\{r_1, \delta/(2C_1)\}]$, and all $h\in\QIS_\alf$,  
\[\tau_h(\Theta_\alf(C_1/(\pi \delta))) \subset B(0, \delta).\] 
\item[(2)]for all $\alf\in (0, r_1]$, all $h\in\QIS_\alf$, and all $r\in (0,1/2]$, we have 
\[\sup_{w\in \Theta_\alf (r/\alf)}  |\tau_h(w)|\leq e^{2\pi} C_1 \frac{\alf}{r} e^{-2\pi\alf \Im w}.\] 
\end{itemize}
\end{lem}

\begin{proof}
{\em Part (1)} Fix $\delta>0$ and let $\eps= \min\{r_1, \delta/(2C_1)$ and $R= C_1/(\pi \delta)$. 
For all $\alf\in (0,\eps]$ and all $w \in \partial B(0, R)$, 
$|1-e^{-2\pi \Bi \alf w}|\geq 1-e^{-2\pi \alf R}$. 
Also, by elementary calculations, for all $\alf\in [0,\eps]$, 
$1-e^{-2\pi \alf R} -\pi \alf R\geq 0$ 
(This holds at the end points $0$ and $\eps$ as well as at the critical point $\delta \log 2 /(2C_1)$).  
Thus, by Equation~\eqref{E:bound-on-sigma}, and the periodicity of $\tau_h$, for $w \in \partial \Theta_\alf(0, R)$, 
\[|\tau_h(w)|\leq |\frac{\sigma_h}{1-e^{-2\pi\alf R}}|\leq \frac{C_1 \alf}{\pi \alf R}=\delta.\]  

On the other hand, for every $\alf\in (0, \eps]$, and every $h\in \QIS_\alf$, as $w\in \Theta_\alf(R)$ 
tends to $+\Bi \infty$, $\tau_h(w)\to 0$, and as $w\to -\Bi \infty$, $\tau_h(w)\to \sigma_h$, 
where $|\sigma_h|\leq C_1 \alf\leq \delta/2$. 
Therefore, by the maximum principle, the inequality in part 1 holds on $\Theta_\alf (R)$. 

\medskip 
 
{\em Part (2)} 
For $w\in \partial \Theta_\alf(r/\alf)$, we have 
\[|1-e^{-2\pi \Bi\alf w}|\geq  1-e^{-2 \pi r}  \geq  e^{-2\pi} r e^{2\pi r} \geq e^{-2\pi} r e^{2\pi \alf \Im w}.\] 
Also, as $\Im w\to \pm\infty$, $ |1-e^{-2\pi\Bi \alf w}|\geq e^{-2\pi} r e^{2\pi \alf \Im w}$ holds.
By the maximum principle, for $w \in \Theta_\alf(r/\alf)$ we must have $ |1-e^{-2\pi\Bi \alf w}|\geq e^{-2\pi} r e^{2\pi \alf \Im w}$. 
Then, by Equation~\eqref{E:bound-on-sigma}, 
\[|\tau_h(w)|=\frac{|\sigma_h|}{|1-e^{-2\pi\Bi \alf w}|}\leq C_1 e^{2\pi} \frac{\alf}{r} e^{-2\pi\alf \Im w}.\] 
\end{proof}

One may lift $h$ under $\tau_h$ to obtain a map $F_h$ defined near $+\Bi \infty$ and $-\Bi \infty$. 
Any such lift satisfies
\begin{equation}\label{lift-relation}
h \circ \tau_h(w)=\tau_h \circ F_h(w), \;F_h(w)+\alf^{-1}= F_h (w + \alf^{-1}),  
\end{equation}
wherever they are defined. 
In the next lemma we analyze the domain of definition of $F_h$ and its asymptotic behaviors near 
$+\Bi \infty$ and $-\Bi \infty$. 
The plan is to study how the perturbed Fatou coordinate of $h$, $\Phi_h$, compares with an appropriate inverse branch 
of $\tau_h$.

%%%%%%%%%%%%%%%%%%%%%%%%%%%%%%%%%%%%%%%%%%%%%%%%%%%%%%

\subsection{Estimates on the lift $F_h$}

\begin{lem}\label{cyl-cond}
There are constants $r_2'>0$, $C_2$, and $C_3$ such that for all $\alf\in (0, r_2']$, and all $h \in \QIS_\alf$, 
there exists a lift of $h$ under $\tau_h$, denoted by $F_h$ such that 
\begin{enumerate}
\item[(1)] $F_h$ is defined and univalent on $\Theta_\alf (C_2)$; 
\item[(2)] for all $w \in \Theta_\alf (C_2)$ we have 
\begin{equation*}\label{I}
|F_h(w)-(w+1)|<1/4, \quad |F_h'(w)-1|<1/4;
\end{equation*} 
\item[(3)] for all $r\in (0,1/2]$ and all $w\in \Theta_\alf(r/\alf)\cap \Theta_\alf(C_2)$ we have  
\begin{equation*}\label{II}
|F_h(w)-(w+1)|< C_3 \frac{\alf}{r} e^{-2\pi\alf \Im w}, \quad |F'_h(w)-1|< C_3 \frac{\alf}{r} e^{-2\pi\alf \Im w}.
\end{equation*}
\end{enumerate}
\end{lem}

\begin{proof}  
{\em Part (1):}
By Lemma~\ref{L:very-basic-estimates-on-IS} every $h\in \QIS_\alf$, with $\alf\in{\mathbb{R}}$, 
is defined and is univalent on $B(0,4/27)$. 
We apply Lemma~\ref{strip-image}-1 with $\delta=4/27$ to obtain $\eps_1 = \min\{r_1, 2/(27 C_1)\}$ 
such that for all $\alf \in (0,\eps_1]$ and $h\in \QIS_\alf$ we have 
$\tau_h(\Theta_\alf(27 C_1/(4\pi)))\subset B(0,4/27)$. 
In particular, $\sigma_h= \lim_{\Im w \to -\infty} \tau_h(w)$ must be in $B(0,4/27)$. 
On the other hand, as $h$ is univalent on $B(0, 4/27)$, $0$ and $\sigma_h$ are the only pre-images of $0$ and $\sigma_h$ in 
$B(0, 4/27)$. 
This implies that any lift of $h$ under $\tau_h$ is a well-defined, finite, and univalent function on $\Theta_\alf (27 C_1/(4\pi))$. 
There are many choices for this lift, but we may choose one with 
\begin{equation}\label{E:choice-of-lift}
\lim_{\Im w\to +\infty} (F_h(w)-w)=1.
\end{equation}
Since $\eps_1 \leq  2 \pi / (27C_1)$, $\Theta_\alf(27 C_1/(4\pi))$ is connected and the normalization 
of $F_h$ near $+\Bi \infty$ uniquely determines the lift $F_h$ on $\Theta_\alf (27 C_1/(4\pi))$.  
Although $F_h$ may be defined beyond $\Theta_\alf(27 C_1/(4\pi))$, it may have singularities outside 
of $\Theta_\alf(27 C_1/(4\pi))$ when $h^{-1}(\sigma_h)\setminus \{\sigma_h\}$ is not empty.  

\medskip

{\em Part (2):}
Using the first formula in Equation~\eqref{lift-relation}, one can see that $F_h$ is given by the formal expression
\begin{align*}
F_h(w)&=w+\frac{1}{2\pi\alf \Bi}\log\big(1-\frac{\sigma_hu_h(z)}{1+zu_h(z))}\big) \\
&=w+1+ \frac{1}{2\pi\alf \Bi}\log\big( e^{-2\pi \alf \Bi} (1-\frac{\sigma_hu_h(z)}{1+zu_h(z)})\big), \text{ with } z=\tau_h(w).
\end{align*}
The branch of $\log$ on the second line of the above equations is determined by $-\pi < \Im \log(\cdot) < \pi$. 
This is consistent with our choice of the branch in Equation~\eqref{E:choice-of-lift}.  

Recall that by Lemma~\ref{L:very-basic-estimates-on-IS} every $h\in \QIS_\alf$ is defined on $B(0,4/27)$. 
Moreover, by the distortion Theorem~\ref{T:Distortions}, $\cup_{\alf\in [0, r_1]} \QIS_\alf$ forms a pre-compact
class of maps. 
By the continuous dependence of $u_h$ on $h$, the restrictions of $u_h$ to $B(0,4/27)$ must also form a pre-compact 
class. 
This implies that there exists a positive constant $\delta_1\leq  4/27$ such that 
\begin{equation}\label{upper-bound-on-expression}
\forall z\in B(0,\delta_1), \quad \Big|1-\frac{u_h(z)}{(1+zu_h(z))u_h(0)}\Big|< \frac{1}{4\pi}.
\end{equation}
Using Lemma~\ref{strip-image}-1 with $\delta=\delta_1$ we obtain 
\[\eps_2=\min \{r_1, \delta_1/(2C_1)\}\] such that for all $w\in \Theta_\alf (C_1/(\pi \delta_1))$, all $\alf\in (0, \eps_2]$ 
and all $h\in \QIS_\alf$, one has $|z|=|\tau_h(w)|<\delta_1$. 
Replacing $\sigma_h$ by the expression in \eqref{sigma} and using $|1-\ea|<2\pi\alf$, we obtain  
\begin{align*}\label{expression}
\Big| e^{-2\pi \alf\Bi} (1-\frac{\sigma_hu_h(z)}{1+zu_h(z)})-1\Big| 
&=\Big |1-\frac{\sigma_hu_h(z)}{1+zu_h(z)}-\ea \Big| \\
&=\Big|(1-\ea)(1-\frac{u_h(z)}{(1+zu_h(z))u_h(0)})\Big| \\
&< 2\pi \alf \cdot \frac{1}{4\pi} < \frac{1}{2}.
\end{align*}
In particular, for all $z \in \tau_h(\Theta_\alf (C_1/(\pi \delta_1)))$, in the expression for $F_h$ the branch of $\log$ 
is well defined.
Note that for all $x\in B(1, 1/2)$ we have $|\log x | \leq 2|x-1|$. 
Then, for all $w\in \Theta_\alf(C_1/(\pi \delta_1))$, using the above inequality we get
\begin{align*}
|F_h(w)-(w+1)|
&=\Big |\frac{1}{2\pi\alf \Bi} \log\Big(e^{-2\pi \alf\Bi}(1-\frac{\sigma_hu_h(z)}{1+zu_h(z)})\Big)\Big| \\
&\leq \frac{1}{2\pi \alf}\cdot  2 \cdot (2\pi \alf \cdot \frac{1}{4\pi}) < \frac{1}{4}.
\end{align*}  
This finishes the proof of the first inequality in part 2. 
On the other hand, for $w_0$ in $\Theta_\alf(C_1/(\pi \delta_1)+1)$, the map $F_h(w)-w-1$ is defined on 
$B(w_0, 1)$ and its absolute value is bounded by $1/4$. 
By the Schwarz-Pick lemma, we must have $|F_h'(w_0)-1|< 1/4$. 
We may define 
\[C_2:=C_1/(\pi \delta_1)+1.\]

\medskip

{\em Part (3):}
By Equation~\eqref{upper-bound-on-expression}, for all $z\in \partial B(0,\delta_1)$  we have,     
\[\Big | 1-\frac{u_h(z)}{(1+zu_h(z))u_h(0)} \Big |< \frac{1}{4\pi} =  \frac{1}{4 \pi \delta_1} |z|.\] 
As $z$ tends to $0$, the expression on the left hand of the above equation tends to $0$. 
By the maximum principle, the above inequality must hold on $B(0, \delta_1)$. 
Therefore, for $w$ in $\Theta_\alf(r/\alf) \cap \Theta_\alf(C_1/(\pi \delta_1))$ we have 
\begin{align*}
|F_h(w)-(w+1)|
&=\Big |\frac{1}{2\pi\alf \Bi} \log\Big(e^{-2\pi \alf\Bi}(1-\frac{\sigma_hu_h(z)}{1+zu_h(z)})\Big)\Big| \\
&\leq \frac{1}{2\pi \alf}\cdot  2 \cdot  \Big| e^{-2\pi \alf\Bi} (1-\frac{\sigma_hu_h(z)}{1+zu_h(z)})-1\Big| \\
&\leq \frac{1}{2\pi \alf}\cdot  2 \cdot 2\pi \alf \cdot \Big | 1-\frac{u_h(z)}{(1+zu_h(z))u_h(0)} \Big | 
\leq \frac{|z|}{2 \pi \delta_1}. 
\end{align*}
By the estimate in Lemma~\ref{strip-image}-2, we obtain the first inequality in part 3 with 
\[C_3=\frac{e^{2\pi} C_1}{2 \pi \delta_1}.\]
The second inequality is similarly proved using the Schwarz-Pick lemma.

The polynomial $P(z)= z(1+z)^2$ belongs to $\ff$. 
This implies that $C_0\leq u_P(0)=2$. 
Then, 
\[\eps_2 \leq \frac{\delta_1}{2 C_1} \leq \frac{4}{27} \cdot \frac{1}{2} \cdot \max \{ \frac{C_0}{2 \pi} , \frac{27}{32 \pi} \} 
\leq \frac{2}{27 \pi}.\]
This guarantees that 
\begin{equation}\label{E:alpha_2-first-condition} 
2 C_2+ \frac{3}{2}= 2 (\frac{C_1}{\pi \delta_1}+1)+ \frac{3}{2} \leq \frac{1}{\pi \eps_2}+\frac{7}{2} 
\leq \frac{1}{\eps_2} \leq \frac{1}{\alf}.
\end{equation}
In particular, for $\alf \in (0, \eps_2]$, the set $\Theta_\alf (C_2)$ is connected.  
\end{proof}

%%%%%%%%%%%%%%%%%%%%%%%%%%%%%%%%%%%%%%%%%%%%%%%%%%%%%%%%%%%

\subsection{Fatou coordinate of $F_h$, $L_h$}\label{SS:Fatou-coordinate-of-F_h}
Let $h\in \QIS_\alf$, $\alf\in (0, r_1]$, with the perturbed Fatou coordinate 
$\Phi_h: \p_h\to \cc$ introduced in Theorem~\ref{Ino-Shi1}.
The set $\tau_h^{-1}(\p_h)$ is periodic of period $1/\alf$ and is contained in $\mathbb{C} \setminus (\mathbb{Z}/\alf)$. 
Each connected component of $\tau_h^{-1}(\p_h)$  is simply connected and is bounded by piece-wise analytic curves going 
from $-\Bi \infty$ to $+\Bi\infty$. 
Let $\widetilde{\p}_h$ denote the connected component of $\tau_h^{-1}(\p_h)$ separating $0$ from $1/\alf$. 
We have a univalent map
\begin{equation}\label{linearize}
L_h:=\Phi_h\circ \tau_h: \widetilde{\p}_h \to \cc.
\end{equation}

The map $h: \p_h \to \mathbb{C} \setminus \{0, \sigma_h\}$ lifts to a univalent map $F_h: \widetilde{\p}_h \to \mathbb{C}$ 
that agrees with the lift we chose in Lemma~\ref{cyl-cond}. 
There is a unique point $\cp_{F_h} \in \partial \widetilde{\p}_h$ with $\tau_h(\cp_{F_h})=\cp_h$.
As $F_h$ is univalent on $\Theta_\alf(C_2)$, $\cp_{F_h}$ must lie in $B(0,C_2) \cup B(1/\alf ,C_2)$. 
Because of orientation, $\cp_{F_h}$ must lie in $B(0,C_2)$. 

By Theorem~\ref{Ino-Shi1}, $L_h$ satisfies the following properties:
\begin{itemize}
\item[1)] $L_h(\cp_{F_h})=0$; 
\item[2)] as $\Im (w)\to +\infty$ in $\widetilde{\p}_h$, $\Im L_h(w)\to +\infty$, and 
as $\Im (w)\to -\infty$ in $\widetilde{\p}_h$, $\Im L_h(w) \to -\infty$; 
\item[3)] $L_h(\widetilde{\p}_h) \supset \{w\in\cc : 0< \Re(w) \leq 1\}$;
\item[4)] If $w$ and $F_h(w)$ belong to $\widetilde{\p}_h$, then    
\begin{equation}\label{equivariance}
L_h(F_h(w))=L_h(w)+1.
\end{equation} 
\end{itemize}
We wish to estimate the map $L_h$ on $\widetilde{\p}_h$. 
To this end, we need to extend $L_h$ onto a larger domain containing $\widetilde{\p}_h$. 
This can be done using the above functional equation, carried out in the remaining of this section. 

By Lemma~\ref{cyl-cond}-2, for every $w\in \Theta_\alf(C_2)$, 
\begin{equation}\label{E:slopes-of-X}
|\arg (F_h(w)-w)|<\arcsin (1/4) < \arcsin ((\sqrt{6}-\sqrt{2})/4)=\pi/12.
\end{equation}

Let $a=C_2 e^{5\pi \Bi/12}$, and $\overline{a}$ denote the complex conjugate of $a$. 
Define the piece-wise analytic curve 
\begin{multline*}
\ell=  \{w\in \mathbb{C} : \arg (w- a)=11\pi/12\}\\
\cup \{w\in \mathbb{C} : \arg (w- \overline{a})=-11\pi/12\} \\
\cup \{C_2 e^{2\pi \Bi \theta} : \theta\in [-5 \pi /12, 5 \pi /12]\}.
\end{multline*}
Let $\ell'$ denote the curve $-\ell+ 1/\alf$.  
The union of $\ell$ and $\ell'$ decomposes $\cc$ into three connected components. 
Let
\[A_1\]
denote the connected component of $\cc \setminus (\ell \cup \ell')$ that contains $1/ (2\alf)$.  
The set $A_1$ is connected and simply connected. 

The line $\ell$ intersects the vertical line $\Re w =0$ at points denoted by $b$ and $\overline{b}$, with 
$\Im b > \Im \overline{b}$. 
Let $\theta_0:= \arcsin(1/4)/2+ \pi/24 \in (\arcsin (1/4), \pi/12)$, and define 
\begin{equation*}
A:= \{w\in \cc \mid \arg (w-b)\in [\pi-\theta_0, 11\pi/12]\}\}. 
\end{equation*}
Set
\[A_2:= A_1 \cup A \cup s(A) \cup (-A+1/\alf) \cup (-s(A)+1/\alf),\]
where $s(w)$ denotes the complex conjugate of $w\in \cc$. 

Each of the curves $L_h^{-1}(\Bi \mathbb{R})$ and $L_h^{-1}(1+\Bi \mathbb{R})$ divide $\cc$ into two 
connected components; say $R$ for the right hand connected component of $\cc\setminus L_h^{-1}(\Bi \mathbb{R})$ 
and $L$ for the left hand connected component of $\cc\setminus L_h^{-1}(1+\Bi \mathbb{R})$. 
Also, let $L'$ and $R'$ denote the corresponding components of $\cc\setminus A_2$. 
These are open subsets of $\cc$. 
Define 
\[A_3:=(R\cap L')\cup (L\cap R'), \footnote{The set $L\cap R'$ is likely to be empty. 
We show in Lemma~\ref{L:domain-of-E-precoordinate} that this is the case when $\alf$ is small enough.}.\] 
The real analytic curves bounding $A_2$ intersect the curves $L_h^{-1}(\Bi \mathbb{R})$ and 
$L_h^{-1}(1+\Bi \mathbb{R})$  at most in a finite number of places. 
Thus, $A_3$ is a union of a finite number of simply connected domains

Set
\[ X:=\daron(A_2 \cup A_3).\]
The set $X$ is connected, $\cp_{F_h}\in \partial X$, $\cv_{F_h}:=F_h(\cp_{F_h})\in X$, and 
$L_h^{-1}((0,1)+\Bi \mathbb{R}) \subset X$.  

\begin{lem}\label{L:extending-F_h}
For every $\alf\in (0, r_2']$ and every $h\in \QIS_\alf$ we have  
\begin{itemize}
\item[(1)] $X$ is simply connected and $F_h$ is defined and is univalent on $X$; 
\item[(2)] for all $w\in X$, there are integers $m_w< n_w$ such that for all integers $j$ with $m_w< j< n_w$
we have $F_h\co{j}(w)\in X$, but for $j \in \{m_w, n_w\}$ we have $F_h\co{j}(w)\notin X$.  
\item[(3)] for all $w \in X$, there exists a unique integer $j_w \in [m_w+1, n_w-1]$ with 
$F_h\co{j_w}(w)\in L_h^{-1}((0,1]+\textnormal{\Bi}\mathbb{R})$;
\item[(4)] for all $w\in B(0, C_2)\cap X$, there is a positive integer $l_w$ with $\Re F_h\co{l_w}(w)\geq C_2$.
Moreover, $l_w$ is uniformly bounded from above independent of $\alf$, $h$, and $w$. 
\end{itemize}
\end{lem}

\begin{proof}
{\em Part (1):} 
As $A_2 \subset \Theta_\alf(C_2)$, by Lemma~\ref{cyl-cond}, $F_h$ is defined on $A_2$. 
The main issue we face here is to show that $0$ and $1/\alf$ do not belong to $A_3$. 

Fix $h_0\in \ff_0\cup \{Q_0\}$ and for $\alf\in (0, r_2']$ let $h_\alf(z):=h_0(\ea z)$ be an element of $\QIS_\alf$. 
When $\alf\to 0$, $\tau_{h_\alf}$ converges uniformly on compact sets to the map $\tau_{h_0}(w):=-2/(h_0''(0) w)$, 
and $h_0$ may be lifted under $\tau_{h_0}$ to a unique map $F_{h_0}$.  
Recall the petal $\p_{h_0}$, the Fatou coordinate $\Phi_{h_0}$, and the relation 
$\Phi_{h_0}(\p_{h_0})=(0, +\infty) +\Bi \mathbb{R}$ from Theorem~\ref{Ino-Shi0}. 
Under $\tau_{h_0}$, $\p_{h_0}$ lifts to a set $\widetilde{\p}_{h_0}$ and $\Phi_{h_0}$ lifts to the univalent 
map $L_{h_0}: \widetilde{\p}_{h_0} \to \mathbb{C}$. 
Then, we have $L_{h_0}(\widetilde{\p}_{h_0})=(0, +\infty)+\Bi \mathbb{R}$ and $L_{h_0}(F_{h_0}(w))= L_{h_0}(w)+1$, 
for all $w\in \widetilde{P}_{h_0}$.
The map $L_{h_0}^{-1}$ extends onto $[0,+\infty)+\Bi \mathbb{R}$ and its image covers the right hand 
connected component of $\cc\setminus L_{h_0}^{-1}(\Bi \mathbb{R})$. 
On the other hand, as $\p_{h_0}$ is compactly contained in $\Dom h_0$, $0$ does not belong to the 
closure of $\widetilde{\p}_{h_0}$. 
Thus, $0$ must be in the left hand connected component of $L_{h_0}^{-1}(\Bi \mathbb{R})$. 
For all $\alf\in (0, r_2']$, $0 \notin \tau_{h_\alf}^{-1}(\Dom h \setminus \{0,\sigma_{h_\alf}\})$. 
Hence, by the continuous dependence of $L_{h_\alf}$ on $\alf$, for all $\alf\in (0, r_2']$,  
$0$ must belong to the left hand connected component of $L_{h_\alf}^{-1}(\Bi \mathbb{R})$.
Similarly, this implies that for all $h\in \cup_{\alf\in (0, r_2']}\QIS_\alf$, $1/\alf$ belongs to the right-hand
side connected component of $\cc\setminus L_h^{-1}(1+\Bi \mathbb{R})$. 

Recall the sets $A_3= (R\cap L')\cup (L\cap R')$. 
By the above paragraph, $0\in L$ and $1/\alf\in R$. 
Using the periodicity of $F_h$ and the above argument, one may prove that $-n/\alf \in L-n/\alf$, 
for $n\in \mathbb{N}$, and $n/\alf \in R+n/\alf$, for $n\in \mathbb{N}$.
Therefore, $-n/\alf\in L$, for $n\in \mathbb{N}\cup \{0\}$ and $n/\alf\in R$, for $n\in \mathbb{N}$. 
Clearly, $-n/\alf\notin R'$, for $n\in \mathbb{N}\cup\{0\}$, and $n/\alf\notin L'$, for $n\in\mathbb{N}$.
Putting these together, we deduce that for all $n\in \mathbb{Z}$, $n/\alf \notin A_3$. 

Let $U_h= \Dom h \subset \mathbb{C}$. 
The set $\hat{C} \setminus U_h$ lifts under $\tau_h$ to countably many simply connected 
components each containing a unique $n/\alf$ for some $n\in \mathbb{Z}$. 
Since every component of $A_3$ is a simply connected region whose boundary is contained in $\tau_h^{-1}(U_h)$,
and $A_3$ avoids $\mathbb{Z}/\alf$, $A_3 \subset \tau_h^{-1}(U_h)$.  
This implies that $F_h$ is defined on $A_3$, except possibly on a discrete set of singularities. 
Such singularities might arise at $\tau_h^{-1}$ of $h^{-1}(\{0,\sigma_h\}) \setminus \{0, \sigma_h\}$, if such points 
lie in $\tau_h (A_3)$.  
Thus, so far we know that such singularities may not occur on $\partial A_3$, and hence may not occur on $\partial X$. 

Recall the polynomial $P(z)=z (1+z)^2$ in the definition of the class $\ff$. 
There is $\eps>0$ such that $B(0, \eps)$ is only covered once by the map $P: U \to \mathbb{C}$, that is, 
$P: P^{-1}(B(0, \eps)) \cap U \to B(0,\eps)$ is a homeomorphism.
When $\alf$ is small enough, $\sigma_h$ belongs to $B(0, \eps)$. 
Thus, $0$ and $\sigma_h$ must be the only pre-images of $0$ and $\sigma_h$ within $U_h$. 
Combining with the above paragraph, for small values of $\alf$, there is no singularity of $F_h$ within $X$. 
One the other hand, $\partial X$ has continuous dependence on $\alf$ and $h$. 
The singularities also depend continuously on $\alf$ and $h$, when they exist, and do not hit $\partial X$ by the above 
paragraph. 
Therefore, for all $\alf \in (0, r_2']$ and $h \in \QIS_\alf$, $F_h$ has no singularity in $X$. 

The set $A_2$ is simply connected and is bounded by a finite number of analytic curves. 
The set $X$ is formed of attaching to $A_2$ a finite number of simply connected domains that share a 
connected and analytic boundary curve with $A_2$.  
This implies that $X$ is simply connected. 
By the same argument $F_h(X)$ is also simply connected.

Let $u: X \to B(0,1)$ and $u': F_h(X) \to B(0,1)$ be some uniformizations. 
The map $u' \circ F_h \circ u^{-1} : B(0,1) \to B(0,1)$ is a proper mapping. 
So, it must be a finite Blaschke product.
But, $F_h^{-1}$ is uniquely defined near $\pm \Bi \infty$.
Thus, the blaschke product must be of degree one, or in other words, $F_h : X \to F_h(X)$ is a homeomorphism. 

\medskip

{\em Part (2):}
Let $\beta_l, \beta_r: \mathbb{R}\to \partial X$ be the piece-wise analytic curves bounding $X$, with 
$\beta_l$ on the left side of $\beta_r$ as well as $\Im \beta_l(t)$ and $\Im \beta_r(t)$ tending  
to $+\infty$ as $t\to +\infty$.  
There are four points $t_i\in \mathbb{R}$, for $i=1,2,3,4$, such that 
\begin{gather*}
\forall t\in (t_1, t_2), \Im \beta_l(t_1) < \Im \beta_l(t) < \Im \beta_l(t_2), \Im \beta_l(t_1)\leq -C_2  
, \Im \beta_l(t_2)\geq C_2,\\
\forall t\in (t_3, t_4), \Im \beta_r(t_3) < \Im \beta_r(t) < \Im \beta_r(t_4), 
\Im \beta_r(t_3)\leq -C_2, \Im \beta_r(t_4)\geq  C_2.
\end{gather*}

Moreover, one may choose $t_2$ and $t_4$ arbitrarily large, as well as $t_1$ and $t_3$ arbitrarily small. 
Then adjust the curves $\beta_r$ and $\beta_l$ into simple curves $\hat{\beta}_r$ and $\hat{\beta}_l$ as follows. 

\begin{gather*}
\hat{\beta}_l(t):= 
\begin{cases}
\beta_l(t_2)+ (t-t_2)\Bi & \text{ if } t\geq t_2 \\
\beta_l(t)  & \text{ if } t\in (t_1, t_2) \\
\beta_l(t_1)+ (t-t_1)\Bi  & \text{ if } t\leq t_1
\end{cases} 
,
\hat{\beta}_r:= 
\begin{cases}
\beta_r(t_4)+ (t-t_4)\Bi & \text{ if } t\geq t_4 \\
\beta_r(t)  & \text{ if } t\in (t_3, t_4) \\
\beta_r(t_3)+ (t-t_3)\Bi  & \text{ if } t\leq t_3
\end{cases} 
\end{gather*}
Let $B$ denote the region bounded by the two curves $\hat{\beta}_l$ and $\hat{\beta}_r$. 
As $t_1, t_3 \to -\infty$ and $t_2, t_4\to +\infty$, the corresponding sets $B$ exhaust $X$. 
Note that $F_h$ maps $\hat{\beta}_l$ into $B$ and $F_h^{-1}$ maps $\hat{\beta}_r$ into $B$.
The domain $B':= B \cap F_h^{-1} (B)$ is simply connected and bounded by $\hat{\beta}_l$ and 
$F_h^{-1}(\hat{\beta}_r)$. 

There is a harmonic function $\mathfrak{u}: B\to (0,1)$ such that $\mathfrak{u}(w)\to 0$ as $w\to \hat{\beta}_l$  
and $\mathfrak{u}(w) \to 1$ as $w\to \hat{\beta}_r$. 
Near the upper end of $B$, $\mathfrak{u}(w)$ tends to a linear function of $\Re w$, 
that is, as $\Im w\to +\infty$, $
\mathfrak{u}(w)$ tends to the function $(\Re w- \Re \hat{\beta}_l(t_2))/(\Re \hat{\beta}_r(t_4)-\Re \hat{\beta}_l(t_2))$.  
That is because, as $\Im w\to +\infty$, the probability of a Brownian motion in $B$ that starts at height $\Im w$ to hit the height 
$\max \{\Im \hat{\beta}_r(t_4), \Im \hat{\beta}_l(t_2)\}$ tends to zero. 
Similarly, near the lower end of $B$, $\mathfrak{u}$ tends to a linear function of $\Re w$. 

Consider the harmonic function $\mathfrak{u}_1: B'\to \mathbb{R}$ defined as 
$\mathfrak{u}_1(w):= \mathfrak{u}(F_h(w))-\mathfrak{u}(w)$. 
We claim that the infimum of $\mathfrak{u}_1$ on $B'$ is strictly positive. 
By the maximum principle, we only need to show this on the boundary of $B'$. 
At $w\in \hat{\beta}_l$, $\mathfrak{u}_1(w)=\mathfrak{u}(F_h(w))>0$, and at $w\in F_h^{-1}(\hat{\beta}_r)$,  
$\mathfrak{u}_1(w)=1- \mathfrak{u}(w)>0$. 
By the above paragraph, near the two ends of $B$, $\mathfrak{u}(w)$ tend to some linear functions of $\Re w$ 
and we have $|F_h(w)-w-1|\leq 1/4$. 
This implies that $\mathfrak{u}_1(w)$ is uniformly bounded away from $0$ when $|\Im w|$ is large enough. 
This finishes the proof of the claim. 

By the above paragraph, the forward orbit of every point in $B$ eventually leaves $B$ on the right hand of 
$\hat{\beta_r}$ and the backward orbit of every point in $B$ eventually leaves $B$ on the left hand 
of $\hat{\beta}_l$.
As the sets $B$ exhaust $X$, we conclude the same statement about every orbit in $X$.  
In particular,  any orbit in $X$ must cross the closure of the region bounded 
by the two curves $L_h^{-1}(\Bi \mathbb{R})$ and $F_h(L_h^{-1}(\Bi \mathbb{R}))$. 

\medskip

{\em Part (3):}
By the previous part, for every $w\in X$ both forward and backward orbit of $w$ under $F_h$ leave $X$. 
In particular, every such orbit must hit $L_h^{-1}((0,1]+\textnormal{\Bi}\mathbb{R})$. 

To see the uniqueness, assume that for some $j_1, j_2\in \mathbb{Z}$ and $w\in X$, $F_h\co{j_1}(w)$ 
and $F_h\co{j_2}(w)$ belong to $L_h^{-1}((0,1]+\Bi \mathbb{R})$. 
Then $F_h\co{j_2-j_1}$ maps the point $F_h\co{j_1}(w)$ in $L_h^{-1}((0,1]+\Bi \mathbb{R})$ into 
$L_h^{-1}((0,1]+\Bi \mathbb{R})$.
However, by Equation~\eqref{equivariance}, $F_h$  maps every point to the right of $L_h^{-1}(\Bi \mathbb{R})$ 
to a point to the right of $L_h^{-1}(1+ \Bi \mathbb{R})$, and $F_h^{-1}$  maps every point to the left of 
$L_h^{-1}(1+\Bi \mathbb{R})$ to a point to the left of $L_h^{-1}(\Bi \mathbb{R})$. 
Thus, we must have $j_1=j_2$, which proves the uniqueness of $j_w$.

\medskip

{\em Part (4):}
Since the forward orbit of every point in $X$ under $F_h$ leaves the domain $X$ on the right hand, 
the existence of $l_w$ follows. 
The uniform bound on $l_w$ is a result of the pre-compactness of the class $\cup_{\alf\in [0, r_2']}\QIS_\alf$.
Indeed, as a sequence $h_i$ tends to some map $h$ (not necessarily in the class $\cup_{\alf\in [0, r_2']}\QIS_\alf$), 
the sequence of Fatou coordinates $\Phi_{h_i}$ converges uniformly on compact sets to some univalent map $\Phi_h$
that enjoys the same equivariance property on its domain of definition.
Further details are left to the reader. 
\end{proof}

\begin{lem}\label{L:extending-L_h}
For all $\alf\in (0, r_2']$ and all $h\in \QIS_\alf$, $L_h$ has a unique univalent extension onto $X$. 
In particular, when both $w$ and $F_h(w)$ belong to $X$, $L_h(F_h(w))= L_h(w)+1$.
\end{lem}

\begin{proof}
By Lemma~\ref{L:extending-F_h}, for all $w\in X$ there is a unique integer $j_w$ with 
$F_h\co{j_w}(w)$ belongs to $L_h^{-1}((0, 1]+\Bi \mathbb{R})$. 
Define $L_h (w):=L_h(F_h\co{j_w}(w))-j_w$.   
Although $j_w$ cannot be continuous in $w$, thanks to Equation \eqref{equivariance} on $\widetilde{\p}_h$, 
this provides us with a well-defined holomorphic map on $X$. 

To prove that $L_h$ is one-to-one, assume that for some $w_1$ and $w_2$ in $X$, $L_h(w_1)=L_h(w_2)$. 
Choose $j\in \mathbb{Z}$ with $\Re L_h(w_1)=\Re L_h(w_2)\in (j,j+1]$. 
We must have $j_{w_1}=j_{w_2}=-j$ and the equation $L_h(w_i)=L_h(F_h\co{(-j)}(w_i))+j$, for $i=1,2$. 
As $L_h$ is univalent on $\widetilde{\p}_h\supset L_h^{-1}((0,1]+\Bi \mathbb{R})$ and 
$F_h$ is univalent on $X$, then $w_1=w_2$.

Since the holomorphic function  $L_h(F_h(w))-L_h(w)-1$ is identically zero on $\widetilde{\p}_h$, 
by the uniqueness of the analytic continuation, it must be $0$ on all of $X$.  
\end{proof}

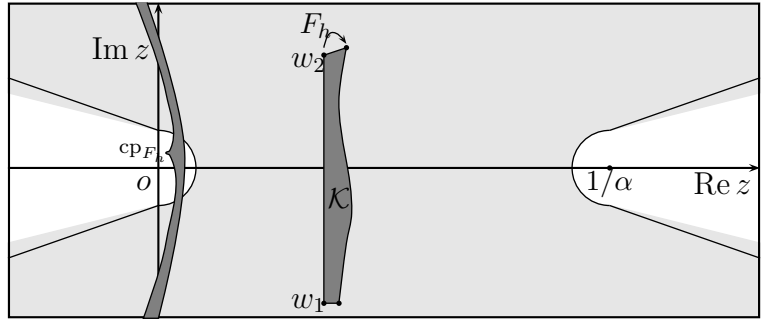
\begin{figure}[ht]
\begin{center}
\begin{pspicture}(0,0)(10,4.2) %\psgrid
 \newgray{Lgray}{.90}
 
\pscustom[linewidth=.5pt,fillstyle=solid,fillcolor=Lgray,linecolor=Lgray]{
 \psline[liftpen=1](0,0)(0,1)
 \psline[liftpen=1](0,1)(2,1.5)
 \psarc[liftpen=1](2,2){0.5}{270}{90}
\psline[liftpen=1](2,2.5)(0,3)
 \psline[liftpen=1](0,3)(0,4.2)
 \psline[liftpen=1](10,4.2)(10,3)
\psline[liftpen=1](10,3)(8,2.5)
\psarc[liftpen=1](8,2){.5}{90}{270}
\psline[liftpen=1](8,1.5)(10,1)
\psline[liftpen=1](10,1)(10,0) }

\psframe(0,0)(10,4.2) 
\psline{->}(0,2)(10,2)
\psline{->}(2,0)(2,4.2)
\rput(9.5,1.8){$\Re z$}
\rput(1.5,3.6){$\Im z$}
\rput(1.8,1.8){$o$}
\psdot[dotsize=2pt](8,2)
\rput(8,1.8){\small $1/\alf$}

\psline[linewidth=.5pt](2,2.5)(0,3.2)
\psline[linewidth=.5pt](2,1.5)(0,.8)      
\psarc[linewidth=.5pt](2,2){0.5}{270}{90}
\psarc[linewidth=.5pt](8,2){.5}{90}{270}

\psline[linewidth=.5pt](8,2.5)(10,3.2)
\psline[linewidth=.5pt](8,1.5)(10,.8)

\pscustom[linewidth=.5pt,fillstyle=solid,fillcolor=gray]{
  \psline[liftpen=1](4.2,.2)(4.2,3.5)    
  \pscurve[liftpen=1](4.5,3.6)(4.4,2.9)(4.57,1.4)(4.5,1.)(4.4,.2)}
\psline[linewidth=.5pt](4.2,.2)(4.4,.2)

 \psdots[dotsize=2pt](4.2,3.5)(4.5,3.6)(4.2,.2)(4.4,.2)
  \pscurve[linewidth=.5pt]{->}(4.2,3.6)(4.3,3.8)(4.5,3.65)
  \rput(4.1,3.9){\small $F_h$}
  \rput(4,3.4){$w_2$}
  \rput(4,.2){$w_1$}
  \rput(4.4,1.6){\small $\mathcal{K}$}
     
\pscustom[linewidth=.5pt,fillstyle=solid,fillcolor=gray]{
\pscurve[linearc=1,liftpen=1,showpoints=true](1.7,4.2)(2.1,3)(2.15,2.23)(2.1,2.2)(2.15,2.17)(2.1,1)(1.8,0)    
\pscurve[liftpen=1,showpoints=true](2,0)(2.35,2.2)(1.85,4.2)}  

\rput(1.8,2.2){\tiny $\cp_{F_h}$}     
\end{pspicture}
\caption{The light gray region shows the domain $A_2$, and the dark gray regions show the sets $\mathcal{K}$ 
and $L_h^{-1}([0,1]+\Bi \mathbb{R})$.}
\label{Sigma-fig}
\label{Sigma}
\end{center}
\end{figure}

%%%%%%%%%%%%%%%%%%%%%%%%%%%%%%%%%%%%%%%%%%%%%%%%%%%%%%%%%%%%%%
\subsection{Estimates on $L_h$}\label{sec:estimates-L_h}
The univalent map $L_h: X \to \mathbb{C}$ provides us with two foliations on $X$ that are the pre-images of the 
horizontal and vertical line segments in $L_h(X)$. 
In other words, the horizontal leaves are the solutions of the vector field $1/L_h'$, while the vertical ones 
are the integral curves of the vector field $\Bi/ L_h'$.
By Lemma~\ref{L:extending-F_h}, the horizontal leaves are invariant under $F_h$ while the vertical leaves are 
mapped to one another under $F_h$. 
In this section we prove some geometric features of these foliations. 

In the next lemma, the sets $A_1$ and $A_2$ refer to the sets defined in Section~\ref{SS:Fatou-coordinate-of-F_h}. 
The constants $r_2'$ and $C_3$ are introduced in Lemma~\ref{cyl-cond}. 

\begin{lem}\label{L:derivative-of-L_h}
There exists $C\in \mathbb{R}$ such that for all $\alf\in (0, r_2']$ and all $h\in \QIS_\alf$, the following hold.
\begin{itemize}
\item[(1)] For all $w$ with $B(w,5)\subseteq A_2$,  we have $|\arg (L_h'(w))|< \pi/3$ and 
$2/5\leq  |L_h'(w)|\leq 8/3$.
\item[(2)] For all $R \in [3.25, 1/(2\alf)]$ and all $w$ with $B(w, R)\subseteq A_2$, 
\[|\arg (L_h'(w))|\leq \frac{8 C_3}{3R}e^{-2\pi\alf\Im w} + \frac{40}{3R},\]
\[ (1-\frac{8C_3}{3R}e^{-2\pi \alf \Im w}) (1-  \frac{5}{R})
\leq |L_h'(w)|
\leq (1+\frac{8 C_3}{3R}e^{-2\pi \alf \Im w}) (1+ \frac{40}{3R}).\]
\item[(3)] For all $w \in A_1$, $C^{-1}\leq |L_h'(w)|\leq C$. 
\item[(4)] As $\Im w\to +\infty$ in $A_1$, $|L_h'(w)-1|= O (1/\Im w+ \alf e^{-2\pi \alf\Im w})$, 
with the constant in $O$ independent of $\alf$ and $h$. 
\end{itemize} 
\end{lem}

\begin{proof}
{\em Part (1):} 
Given $w_0$ with $B(w_0,5)\subseteq A_2$, let $w_1:=4(F_h(w_0)-w_0)/15$. 
By Lemma~\ref{cyl-cond}-2, $|F_h(w_0)-w_0|<1+1/4$,  $|w_1|< 1/3$, and $B(F_h(w_0), 15/4) \subset B(w_0, 5)$. 
Thus, $L_h$ is defined and univalent on $B(F_h(w_0),15/4)$. 
The function 
\[\psi(w)=\Big( L_h(F_h(w_0))- L_h(F_h(w_0)-\frac{15w}{4}) \Big) \frac{4}{15 L_h'(F_h(w_0))}, \] 
is defined and univalent on $|w| <1$ with $\psi(0)=0$ and $\psi'(0)=1$. 
Applying Theorem~\ref{T:Distortions}-4 to the above map at $w_1$ we obtain 
\begin{multline*}
|\arg (w_1L_h'(w_0))| =  |\arg (w_1L_h'(w_0)\frac{15}{4})|  \leq |\arg (w_1 \psi'(w_1)/\psi(w_1))| \\
 \leq \log \frac{1+|w_1|}{1-|w_1|} \leq \log \frac{1+ 1/3}{1-1/3} < \frac{3}{4} < \frac{\pi}{4}.
\end{multline*}
Above, we have used $\log 2=\int_{1}^{2} 1/x \, dx < 1/8(8/8+8/9+8/10+\cdots + 8/15) < 3/4$. 

By Equation~\eqref{E:slopes-of-X}, $|\arg (w_1)|< \pi/12$. 
Therefore, $|\arg (L_h'(w_0))|< \pi/12+\pi/4= \pi/3$.
  
Applying Theorem~\ref{T:Distortions}-3 to the function $\psi$ we obtain 
\[ \frac{1}{2}\cdot \frac{4}{5} \leq  \frac{1-1/3}{1+1/3} \cdot |\frac{4}{15 w_1}|
\leq  |L_h'(w_0)|\leq 
|\frac{4}{15w_1}|\cdot \frac{1+1/3}{1-1/3} \leq \frac{4}{3} \cdot 2.\]
Above, we use $4/5 \leq |4/(15 w_1)|\leq 4/3$ which is obtained from $|15w_1/4-1|\leq 1/4$. 

\medskip

{\em Part (2):} 
Fix $w_0$ with $B(w_0, R)\subset A_2$. 
Let $R':= R-1.25$ and $w_1:=(F_h(w_0)-w_0)/R'$.  
By Lemma~\ref{cyl-cond}-2, and a simple calculation, 
\begin{equation}\label{E:bound-n-w1}
|w_1| \leq \frac{5}{4R'} \leq \frac{5}{8},  \quad
\frac{1}{R} \leq \frac{1}{R'} \leq \frac{2}{R}.
\end{equation} 
As $B(w_0, R) \subset A_2$, $d(w_0, \mathbb{Z}/\alf) \geq R > R'$. 
Hence $w_0\in \Theta_\alf(R')$. 
Using Lemma~\ref{cyl-cond}-3 with $r= R' \alf \in [0, 1/2]$, we have 
\begin{equation}\label{E:bound-on-R'w1}
|R'w_1-1|\leq \frac{C_3}{R'}e^{-2\pi \alf \Im w_0} \leq \frac{2C_3}{R}e^{-2\pi \alf \Im w_0}.
\end{equation}
Then, using Equation~\eqref{E:slopes-of-X}, and the inequality 
$\mathrm{d} \arcsin x/\mathrm{d}x=1/\sqrt{1-x^2}\leq 4/3$ on $[0, 1/4]$, we get 
\begin{multline*}
|\arg (w_1)| = |\arg (R' w_1)|< \arcsin (\min \big \{\frac{2C_3}{R} e^{-2\pi \alf\Im w_0}, 1/4 \big \} ) \\
\leq \min \{\frac{4}{3} \cdot \frac{2C_3}{R}e^{-2\pi\alf\Im w_0}, \pi/12\}.
\end{multline*}
By Lemma~\ref{cyl-cond}-2, $B(F_h(w_0), R') \subset B(w_0, R) \subset A_2$. 
Hence, $L_h$ is defined and univalent on $B(F_h(w_0), R')$.  
The function 
\begin{equation}\label{E:univalent-function}
\psi(w)= \Big ( L_h(F_h(w_0))-L_h(F_h(w_0)-R'w) \Big ) \frac{1}{R' L_h'(F_h(w_0))}
\end{equation} 
is defined and univalent on $|w|<1$ with $\psi(0)=0$ and $\psi'(0)=1$. 
Using the distortion theorem~\ref{T:Distortions}-4 at $w_1$, we get 
\begin{multline*}
|\arg (w_1 \psi'(w_1)/\psi(w_1))|=
|\arg (w_1 L_h'(w_0))|  \\
\leq  \log (\frac{1+|w_1|}{1-|w_1|})  
= \log (1+\frac{2|w_1|}{1-|w_1|}) \\
\leq  \frac{2|w_1|}{1-|w_1|} 
\leq \frac{16}{3} |w_1|
\leq \frac{16}{3} \cdot \frac{5}{4R'} 
\leq \frac{40}{3R}.
\end{multline*}
Therefore, 
\[|\arg (L_h'(w_0))| \leq |\arg (w_1)| +  |\arg (w_1 L_h'(w_0))|  \leq \frac{8C_3}{3R}e^{-2\pi\alf\Im w_0} + \frac{40}{3R}.\] 

Applying Theorem~\ref{T:Distortions}-3 to the function $\psi$ at $w_1$ we have  
\[ \frac{1 - |w_1|}{1 + |w_1|} \leq  \Big | \frac{w_1 \psi'(w_1)}{\psi(w_1)} \Big | 
= | L_h'(w_0) w_1 R' | \leq \frac{1+ |w_1|}{1- |w_1|} \]  
This implies 
\begin{align*}
|L_h'(w_0)|  
& \leq \big |\frac{1}{R'w_1} \big | \cdot \frac{1+|w_1|}{1-|w_1|}   \\
& = \big |\frac{R' w_1+1 - R'w_1}{R'w_1} \big |  \cdot \frac{1-|w_1|+ 2|w_1|}{1-|w_1|} \\
& \leq  (1+ \frac{|1-R'w_1|}{|R'w_1|}) (1+ \frac{2|w_1|}{1- |w_1|})  \\
& \leq  (1+\frac{4}{3} \frac{2C_3}{R}e^{-2\pi \alf \Im w_0}) (1+ \frac{40}{3R}).
\end{align*}
In the last inequality of the above estimates we have used the inequalities in Equations~\eqref{E:bound-on-R'w1} and 
\eqref{E:bound-n-w1}, as well as $|R' w_1| \geq 3/4$. 

Similarly,  
\begin{align*}
|L_h'(w_0)| 
& \geq \frac{1}{|R'w_1|} \cdot \frac{1-|w_1|}{1+|w_1|} \\
&  \geq (1- \frac{|1-R'w_1|}{|R'w_1|}) (1- \frac{2|w_1|}{1+|w_1|}) \\
& \geq (1- \frac{4}{3} \frac{2C_3}{R}e^{-2\pi \alf \Im w_0}) (1-\frac{5}{R}).
\end{align*}

\medskip

{\em Part (3):} 
The set $A_1$ is simply connected and is bounded by two piece-wise analytic curves, say $\ell_1$ and $\ell_2$ 
with $\ell_1$ on the left hand of $\ell_2$. 
By Lemma~\ref{cyl-cond}-2, as well as Equations~\eqref{E:slopes-of-X} and \eqref{E:alpha_2-first-condition}, 
$F_h (\ell_1)$ and $F_h^{-1}(\ell_2)$ are defined and belong to $A_1$. 
Indeed, there is $\delta>0$ such that the Euclidean distances $d(F_{h}(\ell_1), \partial A_1)$ and $d(F_h^{-1}(\ell_2), \partial A_1)$ 
is at least $\delta$. 

Let $B= \{w\in A_1 : B(w, 5)\subset A_1\}$, and let $\dhyp$ denote the poincar\'e metric on $A_1$ 
(i.e.\ the complete metric of constant curvature $-1$). 
The set $B$ is closed, but not necessarily connected.
By the previous paragraph, there is $\delta'>0$, depending only on $\delta$, such that for every 
$w \in A_1$ there is $j_w\in \{-1, 0, 1\}$ such that $\dhyp (F_h \co{j_w}(w), B)\leq \delta'$. 
Define $B'= \{w \in A_1 : \dhyp (w, B) \leq \delta' \}$.  

Let us fix an arbitrary $w$ in $A_1$.
To find uniform upper and lower bounds on $|L_h'(w)|$ we consider the three cases 
$w\in B$, $w\in B' \setminus B$, and $w \in A_1\setminus B'$.  

If $w\in B$, then by Part (1), $ 2/5 \leq |L_h'(w)| \leq 8/3$.  

Assume that $w \in B' \setminus B$ (here $j_w=0$). 
There is $w' \in B$ with $\dhyp(w,w') \leq \delta'$, and a Riemann mapping $u: B(0,1) \to A_1$ with $u(0)=w'$. 
It follows that there is $\delta''<1$ depending only on $\delta'$, such that $|u^{-1}(w)| \leq \delta''$. 
By the distortion Theorem~\ref{T:Distortions}-1, 
\[\frac{1-\delta''}{(1+\delta'')^3} \leq \frac{|(L_h \circ u)'(u^{-1}(w))|}{|(L_h\circ u)'(0))|} \leq \frac{1+\delta''}{(1-\delta'')^3}, \]
and 
\[\frac{1-\delta''}{(1+\delta'')^3} \leq \frac{|u'(u^{-1}(w))|}{|u'(0))|} \leq \frac{1+\delta''}{(1-\delta'')^3}.\] 
These imply that 
\begin{equation}\label{E:upper-bound-L'}
|L_h'(w)| \leq  \frac{1+\delta''}{(1-\delta'')^3} \cdot \frac{(1+\delta'')^3}{1-\delta''} \cdot |L_h'(w')| \leq  
\frac{(1+\delta'')^4}{(1-\delta'')^4} \cdot  \frac{8}{3},
\end{equation}
and  
\begin{equation}\label{E:lower-bound-L'}
|L_h'(w)| \geq  \frac{1-\delta''}{(1+\delta'')^3} \cdot  \frac{(1-\delta'')^3}{1+\delta''} \cdot |L_h'(w')| \geq  
\frac{(1-\delta'')^4}{(1+\delta'')^4} \cdot \frac{2}{5}.
\end{equation}

Assume that $w \in A_1 \setminus B'$.  
By the first paragraph, $w'= F_h \co {j_w}(w) \in B'$ for some $j_w \in \{+1, -1\}$.
If $j_w=+1$,  then 
\[L_h'(w)= (L_h \circ F_h-1)'(w) = L_h'(w') \cdot F_h'(w),\]
while the absolute value of the right hand of the above equation is bounded from above and away from $0$ 
by Equations~\eqref{E:upper-bound-L'} and \eqref{E:lower-bound-L'}, and Lemma~\ref{cyl-cond}.
If $j_w=-1$, then 
\[L_h'(w)= (L_h \circ F_h^{-1} +1)'(w) = L_h'(w') \cdot (F_h^{-1})'(w),\]
with the size of the right hand of the above equation uniformly bounded from above and away from $0$
by Equations~\eqref{E:upper-bound-L'} and \eqref{E:lower-bound-L'}, and Lemma~\ref{cyl-cond}.   
   
\medskip

{\em Part (4):} 
Since there is a positive angle between the rays bounding the set $A_1$ and $A_2$, there is 
 $\delta>0$, independent of $\alf$ and $h$, such that for $w_0 \in A_1$ with $\Im w_0 \geq 2 C_2$, 
 $B(w_0, \delta \Im w_0) \subset A_2$. 
The function 
\[\psi(w)= \big ( L_h(w_0)- L_h (w_0 - \delta \Im w_0 \cdot w ) \big ) \frac{1}{\delta \Im w_0 L_h'(w_0)}\]
is defined and univalent on $|w|< 1$, $\psi(0)=0$, and $\psi'(0)=1$. 
Let us define $w_1=(w_0-F_h(w_0))/ (\delta \Im w_0)$. 
For $\Im w_1$ large enough, $|w_1| \leq 1/2$. 

Applying Theorem~\ref{T:Distortions}-3 to $\psi$ at $w_1$ we get
\[\frac{1}{|w_0- F_h(w_0)|} \cdot \frac{1-|w_1|}{1+ |w_1|} \leq |L_h'(F_h(w_0))| 
\leq \frac{1+ |w_1|}{1- |w_1|} \cdot \frac{1}{|w_0- F_h(w_0)|}\]
which implies 
\[\big | |L_h'(F_h(w_0))| -1 \big | = O (1/ \Im w_0).\]
Applying Theorem~\ref{T:Distortions}-4 to $\psi$ at $w_1$ we get 
\begin{align*}
|\arg (L_h'(F_h(w_0)))| & \leq |\arg (L_h'(F_h(w_0)) (w_1 \delta \Im w_0)) - \arg (w_1 \delta \Im w_0)| \\
& \leq \log \frac{1+ |w_1| }{1- |w_1|} + | \arg (F_h(w_0) -w_0)| \\
& \leq O (1/\Im w_0) + O (\alf e^{-2\pi \alf\Im w_0}).
\end{align*}
In the last line of the above equations we have used $|F_h(w_0) - w_0 -1| = O (\alf e^{-2\pi \alf\Im w_0})$ which is 
provided by Lemma~\ref{cyl-cond} (with $r=1/2$). 
Combining the above inequalities we conclude 
\begin{align*}
|L_h'(F_h(w_0))-1| & \leq  |\arg (L_h'(F_h(w_0)))| \cdot  |L_h'(F_h(w_0))|+  \big | |L_h'(F_h(w_0))| -1 \big |  \\
& = \big (O(1/\Im w_0) + O(\alf e^{-2\pi \alf\Im w_0}) \big ) + O(1/\Im w_0). 
\end{align*}
Finally,  
\begin{align*}
|L_h'(w_0)-1| &= |L_h'(F_h(w_0)) \cdot F_h'(w_0)-1| \\
& =| L_h'(F_h(w_0)) (F_h'(w_0)-1)| + |L_h'(F_h(w_0)) -1 | \\
& \leq C \cdot O(\alf e^{-2\pi \alf\Im w_0}) + O(1/\Im w_0) + O(\alf e^{-2\pi \alf\Im w_0}).
\end{align*}
The constant in all of $O$ depend only on $\delta$. 
\end{proof}

We also need to control the geometry of the vertical leaves of  the foliation in $X$. 
But, integrating the vector field $\Bi/L_h'$, using the estimates in Lemma~\ref{L:derivative-of-L_h}, 
results in diverging integrals.
We present an alternative approach to deal with this issue in the next proposition. 

\begin{propo}\label{fine-tune} 
For all $M'\in \mathbb{R}$, there is $M>0$ such that for all $\alf$ in $(0, r_2']$, all $h\in \QIS_\alf$, and 
all $r\in (0,1/2]$ the following holds. 
Let $w_1,w_2 \in A_1$ with 
\begin{itemize}
\item[--]$\Re w_1=\Re w_2$, and $\Im w_2 \geq \Im w_1 \geq M'/\alf$,
\item[--]for all $t\in (0,1)$, $t w_1+(1-t)w_2 \in \Theta_\alf(r/\alf) \cap A_1$.
\end{itemize}
Then, 
\begin{itemize}
\item[(1)] $|\Re(L_h(w_1)-L_h(w_2))|\leq M/r$,
\item[(2)] $|\Im \left(L_h(w_1)-L_h(w_2)\right)-\Im(w_1-w_2)|\leq M/r$,
\item[(3)] As $M'$ tends to $+\infty$, $M$ tends to $0$.
\end{itemize}
\end{propo} 

\begin{proof} 
For the simplicity of the notations let $t_i:=\Im w_i$, $i=1,2$.
%Without loss of generality we may assume that $t_1<t_2$. 
Define 
\[l:=\{t w_1+(1-t)w_2: t\in [0,1]\}.\] 

Let us assume that $F_h(l) \subset A_1$.  
By Lemma~\ref{cyl-cond}-2, the two curves $l$, $F_h(l)$, as well as the two line segments $w_1+t(F_h(w_1)-w_1)$, $t\in [0,1]$, 
and $w_2+t(F_h(w_2)-w_2)$, $t\in [0,1]$, cut $\cc$ into two connected components. 
Denote the closure of the bounded one by $\mathcal{K}$ (see Figure~\ref{Sigma-fig}). 
We have $\mathcal{K}\subset A_1$, and by Lemma~\ref{L:extending-L_h}, $L_h$ is defined on $\mathcal{K}$. 

Define the rectangle 
\[\mathcal{D}:=\{ s+\Bi t : 0 \leq s \leq 1,\; t_1 \leq t \leq t_2\},\] 
and the map $g: \mathcal{D} \to \mathcal{K}$ as 
\[g(s+\Bi t):=(1-s)(\Re(w_1)+\Bi t)+sF_h(\Re(w_1)+\Bi t).\] 
By the estimates in Lemma~\ref{cyl-cond}-2, $g$ is uniformly close to a translation on $\mathcal{D}$. 
%\begin{equation*}\label{E:g-changes}
%\begin{gathered}
%\forall w, w'\in \mathcal{K}, |\Im (g^{-1}(w)-g^{-1}(w')) - \Im (w-w')|\leq 1/2 \\
%\forall w,w'\in \mathcal{K}, |\Re (g^{-1}(w)- g^{-1}(w')) -\Re (w-w') |\leq 1/2.
%\end{gathered}
%\end{equation*}
To prove the desired estimates in the proposition, we compare $L_h$ to $g^{-1}$ by analyzing the map 
$G:=L_h\circ g: \mathcal{D} \mapsto \mathbb{C}$.
Using the notations $\zeta=s+\Bi t$, $d\zeta=ds+\Bi dt$ and $d\bar{\zeta}=ds-\Bi dt$, by Green's Theorem, we have 
\begin{align} \label{Green}
\ointclockwise_{\partial \mathcal{D}} G(\zeta)\, d\zeta=\iint_{\mathcal{D}} -\frac{\partial G}{\partial \bar{\zeta}}(\zeta) \,d\zeta% 
\wedge d\bar{\zeta}.
\end{align}
With notation $w=g(\zeta)$ and the Cauchy-Riemann equation $\partial L_h/\partial \bar{w}=0$,  
the complex chain rule for $G$ can be written as 
\begin{equation*}
\frac{\partial G}{\partial\bar{\zeta}}=
(\frac{\partial L_h}{\partial w}\circ g)\frac{\partial g}{\partial \bar{\zeta}}+
(\frac{\partial L_h}{\partial \bar{w}}\circ g)\frac{\partial \bar{g}}{\partial \bar{\zeta}}
=(\frac{\partial L_h}{\partial w}\circ g)\frac{\partial g}{\partial \bar{\zeta}}.
\end{equation*} 
A simple differentiation gives
\begin{equation*} %\label{partials}
\begin{aligned}
\frac{\partial g}{\partial \bar{\zeta}}(s+\Bi t)&=\frac{1}{2}[\frac{\partial g}{\partial s}+\Bi \frac{\partial g}{\partial t}](s+\Bi t)\\
&=\frac{1}{2}[F_h(\Re w_1+\Bi t)-(\Re w_1+\Bi t)-1+s(1-F'_h(\Re w_1+\Bi t))].
\end{aligned}
\end{equation*}
Since $l\subset \Theta_\alf(r/\alf)$, by Lemma~\ref{cyl-cond}-3, 
\[|\frac{\partial g}{\partial \overline{\zeta}} (s+\Bi t)| \leq \frac{1}{2} |F_h(g(\Bi t))-g(\Bi t)-1| + \frac{1}{2} |F_h'(g(\Bi t))-1|
 \leq C_3 \frac{\alf}{r} e^{-2\pi \alf t}.\] 
Also recall that by Lemma~\ref{L:derivative-of-L_h}, $|L_h'|\leq C$ on $\mathcal{K}$. 
Then, the right hand of Equation~\eqref{Green} may be controlled as follows:  
\begin{equation}\label{E:righ-hand-green}
\begin{aligned}
\Big|\iint_\mathcal{D} \frac{\partial G}{\partial \bar{\zeta}}(\zeta) \, d\zeta\wedge d\bar{\zeta}\Big |
&\leq 2 \int_{t_1}^{t_2} \int_0^1 \big|\frac{\partial G}{\partial \bar{\zeta}}(s+\Bi t)\big|\, ds dt   \\
&\leq 2\sup_{w\in \mathcal{K}} |L_h'(w)| \int_{t_1}^{t_2} \int_0^1\!\! C_3 \frac{\alf}{r}
e^{-2\pi\alf t}\,ds dt \\
&\leq \frac{2 C C_3 \alf}{r}  \int_{M'/\alf}^{\infty} e^{-2\pi\alf t}\, dt  \\
&\leq  \frac{C C_3}{\pi r} e^{-2\pi  M'}. 
\end{aligned}
\end{equation}

The left hand of Equation \eqref{Green} may be written as 
\begin{multline*}
\int_{t_1}^{t_2}G(\Bi t) \Bi\,d t + \int_0^1 G(s +\Bi t_2)\,d s \\ 
+\int_{t_1}^{t_2} G(1+\Bi (t_1+t_2- t))(-\Bi)\,d t +\int_0^1 G(1- s +\Bi t_1)(-1)\,d s. 
\end{multline*}
By definition, for $t\in [t_1, t_2]$, we have $g(1+ \Bi t)= F_h(g(\Bi t))$. 
This implies that for all $t\in [t_1, t_2]$, $G(1+ \Bi t)= G(\Bi t)+1$. 
Using this relation in the third integral of the above equation and then making a change of coordinate, 
the above sum is equal to 
\begin{eqnarray*}
-\Bi(t_2-t_1)+\int_0^1 G(s+\Bi t_2)\,ds + \int_0^1 -G(1-s+\Bi t_1)\,ds.
\end{eqnarray*}

Set 
\[M_i:= \sup_{s\in [0,1]} \big |L_h'(g(s+ \Bi t_i)) - 1\big |, i=1,2.\]
By Lemma~\ref{L:derivative-of-L_h}-3, $M_1$ and $M_2$ are uniformly bounded from above independent of $\alf$ and $h$. 
Moreover, by Lemma~\ref{cyl-cond}, $|\partial g/\partial s| \leq 5/4$ on $\mathcal{K}$ and 
$|\partial g /\partial s -1|\leq C_3 \alf e^{-2\pi M'} /r$ on $\mathcal{K}$. 
Hence,  
\begin{equation}\label{E:the-1/4-error-1}
\begin{aligned}
\Big| \int_{0}^{1} G(s  + \Bi t_2)\,ds - L_h(w_2)-1/2 \Big | 
& = \Big | \int_0^1 G(s +\Bi t_2) - G(\Bi t_2)-s \,ds \Big | \\
&\leq \int_0^1 |G(s +\Bi t_2)-G(\Bi t_2)-s| \,ds  \\
&\leq \sup_{s \in [0,1]} |G(s +\Bi t_2)-G(\Bi t_2)-s|\\ 
&\leq \sup_{s \in [0,1]} \big |\frac{\partial G}{\partial s}(s+\Bi t_2)-1\big |\\
&\leq \sup_{s \in [0,1]}  \big | \frac{\partial g}{\partial s} (s+\Bi t_2)  \cdot L_h'(g(s+\Bi t_2)) -1 \big | \\
& \leq \frac{5}{4} \cdot M_2+\frac{C_3\alf}{r} e^{-2\pi M'}.
\end{aligned}
\end{equation}
To get the fifth inequality in the above equation we have used the formula $AB-1= A(B-1)+A-1$. 
On the other hand, 
\begin{align*}
\int_{0}^{1}-G(1-s+ \Bi t_1)\,ds+ L_h(w_1) +1/2 
 &=\int_{0}^{1}-G(1-s+ \Bi t_1) + L_h(w_1) +(1-s) \,ds \\
 & = -\int_{0}^{1} G(s + \Bi t_1) - L_h(w_1) - s \,ds.
\end{align*}
Thus, the inequalities in Equation~\eqref{E:the-1/4-error-1} may be repeated for the above integral to conclude 
\begin{equation}\label{E:the-1/4-error-2}
\Big|  \int_{0}^{1}-G(1-s+ \Bi t_1)\,ds+ L_h(w_1) +1/2  \Big |  \leq \frac{5}{4} \cdot M_1+\frac{C_3\alf}{r} e^{-2\pi M'}.
\end{equation}

One infers the inequalities in Parts (1) and (2) of the proposition by considering the real part and the imaginary part of  
Equation~\eqref{Green} as well as using the bounds in Equations~ \eqref{E:the-1/4-error-1} and  \eqref{E:the-1/4-error-2}.  
For instance, for Part (2), 
\begin{align*}
|\Im (w_2-w_1) &- \Im (L_h(w_2)-L_h(w_1)) | \\
& \leq \Big| (t_2 -t_1) - \Im \big (\int_0^1 G(s+\Bi t_2)\,ds \big ) - \Im \big ( \int_0^1 -G(1-s+\Bi t_1)\,ds\big) \Big  | \\
& \qquad + \Big | \Im \big( \int_0^1 G(s+\Bi t_2)\,ds \big) - \Im (L_h(w_2)) -1/2 \Big | \\
& \qquad \qquad + \Big | \Im \big(\int_0^1 - G(1-s+\Bi t_1)\,ds \big) + \Im (L_h(w_1)) +1/2\Big | \\
& \leq \big (\frac{C C_3e^{-2\pi M'}}{\pi} +  \frac{5M_2 r}{4} + \frac{5M_1r}{4} + 2 C_3 \alf e^{-2\pi M'}\big) \frac{1}{r}.
\end{align*}

This finishes the proof of the inequalities in Parts (1) and (2) under the assumption $F_h(l)\subset A_1$ that 
we made at the beginning of the proof.

If $F_h^{-1}(l)\subset A_1$, one considers the bounded region $\mathcal{K}'$ enclosed by the curves $l$, $F_h^{-1}(l)$, 
$t\mapsto w_1+ t(F_h^{-1}(w_1)-w_1)$, $t\in [0,1]$ and $t\mapsto w_2+ t(F_h^{-1}(w_2)-w_2)$, $t\in [0,1]$. 
One may repeat the above calculations and estimates for the map $g: \mathcal{D}\to \mathcal{K}'$ defined as 
$g(s+\Bi t):=s(\Re(w_1)+\Bi t)+(1-s)F_h^{-1}(\Re(w_1)+\Bi t)$.  
This leads to the same estimates in Parts (1) and (2), under this condition. 

There is also the possibility that for some $w_1$ and $w_2$ neither $F_h(l) \subset A_1$ nor $F_h^{-1}(l) \subset A_1$ holds. 
(This is when the two balls $B(0,C_2)$ and $B(1/\alf, C_2)$ are close.)  
By the assumption on $\alf$ in Equation~\eqref{E:alpha_2-first-condition}, the balls $B(0,C_2)$ and $B(1/\alf, C_2)$ 
are at least $5/4$ apart. 
Thus, there are $w_1' \in B(w_1,1)$ and $w_2' \in B(w_2,1)$ with $\Re w_1'= \Re w_2'$, the line 
segment $l'$ connecting them remains in $\Theta_\alf(r/(2\alf)) \cap A_1$, and either 
$F_h(l') \subset A_1$ or $F_h^{-1}(l')\subset A_1$. 
By the above argument we obtain the inequalities in the proposition for $w_1'$ and $w_2'$.  
From that, one infers the inequalities in Parts (1) and (2) for $w_i$ using the uniform bound 
on $L_h'$ in Lemma~\ref{L:derivative-of-L_h}-3, by making $M$ large enough.   
  
\medskip

{\em Part (3):} 
For $M' \geq 1/2$, we may fix $r=1/2$ in the above arguments. 
Also, by Lemma~\ref{L:derivative-of-L_h}-4, the constants $M_1$ and $M_2$ are $O (\alf/M' + e^{-2\pi M'})$. 
In particular, $M_1$ and $M_2$ tend to $0$ as $M'$ tends to $+\infty$. 
This implies that
\[M=(\frac{C C_3e^{-2\pi M'}}{\pi} +  \frac{5M_2 r}{4} + \frac{5M_1r}{4} + 2 C_3 \alf e^{-2\pi M'}\big)\]
tends to $0$ as $M'$ tends to $+\infty$. 
\end{proof}

\begin{lem}\label{E:asymptote-of-L_h} 
The limit 
\[\ell_h:=\lim _{\substack{\Im w\to +\infty \\ w\in A_1}} L_h(w)-w \]
exists and is finite. 
\end{lem}

\begin{proof}
For $w_1$ and $w_2$ on the vertical line $\Re w= 1/(2\alf)$ we may use Proposition~\ref{fine-tune} with 
$r=1/2$ and large values of $M'$to conclude that $|(L_h(w_2)-L_h(w_1)) - (w_2-w_1)|$ tends to $0$ as $\Im w_1$ and 
$\Im w_2$ tend to $+\infty$.
This implies that the function $L_h(w)-w$ satisfies the Cauchy criterion and hence its limit 
exists as $\Im w\to +\infty$ along $\Re w= 1/(2\alf)$. 

By Lemma~\ref{L:derivative-of-L_h}-4, $|L_h'(w)-1|$ tends to $0$ as $\Im w \to +\infty$ in $A_1$. 
This implies that as $\Im w \to +\infty$ in the strip $\Re w \in [1/(2\alf) -1, 1/(2\alf)+1]$, the limit of $L_h(w)-w$ exists. 

Using Lemma~\ref{cyl-cond}-2, for $w\in A_1$ there is an integer $j$ with $|j| \leq O(\Im w)$ 
such that $\Re F_h\co{j}(w) \in [1/(2\alf) -1, 1/(2\alf)+1]$. 
Moreover, by Lemma~\ref{cyl-cond}-3 with $r=1/2$, 
\[|F_h\co{j}(w) -j -w| \leq j \cdot 2 C_3 \alf e^{-2\pi \alf \Im w} \leq O((\Im w) \cdot  e^{-2\pi \alf \Im w}).\]  
Therefore,  
\[L_h(w) -w = L_h (F_h\co{j}(w)) -j -w = \big (L_h (F_h\co{j}(w)) - F_h\co{j}(w)\big) + \big( F_h\co{j}(w) -j -w \big).\]
Thus, $L_h(w) -w$ has a limit as $w\in A_1$ tends to $+\Bi \infty$. 
%This finishes the proof of the lemma.  
\end{proof}
We shall give an upper bound on the size of $\ell_h$ in Corollary~\ref{C:size-of-translation}. 

%%%%%%%%%%%%%%%%%%%%%%%%%%%%%%%%%%%%%%%%%%%%%%%%%%%%%%%
\subsection{The image of $L_h$}\label{sec:width-of-L_h(X)}
Recall that $L_h(X)$ is an open subset of $\cc$ containing $(0,1]+\Bi \mathbb{R}$. 
In this section we prove a lower bound on 
\begin{equation}\label{E:x_h}
x_h:=\sup \{ t\in (0, +\infty) \mid (0,t)+\Bi \mathbb{R} \subseteq L_h(X)\}.
\end{equation}
Recall the constants $r_2'$ and $C_2$ introduced in Lemma~\ref{cyl-cond}.    
Define the constant 
\begin{equation}\label{E:r_2}
r_2:=\min \{r_2', 1/(4C_2+20)\}.
\end{equation}
For $\alpha\leq r_2$, $2C_2+10$ and $\alpha^{-1}-2C_2 -10$ belong to $X$, and hence $L_h$ is defined 
at these points. 
Let 
\begin{equation}\label{E:s_1-s_2}
s_1= \Re (L_h(2C_2+10)), \quad s_2= \Re (L_h(\alpha^{-1}-2C_2 -10)).
\end{equation}

\begin{lem}\label{L:domain-of-E-precoordinate} 
For all $\alf \in (0, r_2]$ and $h\in \QIS_\alf$, we have
\begin{itemize}
\item[(1)] $1< s_1 \leq s_2 \leq x_h$; 
\item[(2)] $\forall w\in L_h^{-1}((s_1, s_2)+\textnormal{\Bi} \mathbb{R})$, $B(w,5) \subset A_1$;
\item[(3)] $\forall t \in \mathbb{R}, |\arg (L_h^{-1}(1+\textnormal{\Bi} t) - 2C_2-10)| \geq \pi/6$;
\item[(4)] $\forall t \in \mathbb{R}, |\arg(L_h^{-1}(x_h+ \textnormal{\Bi} t) - \alf^{-1}+ 2C_2+10)|\leq 5\pi/6$;
\item[(5)] $\limsup_{|t|\to +\infty} |\arg (L_h^{-1}(\textnormal{\Bi}t)-2C_2-10)| \leq 5\pi/6$;
\item[(6)] $\limsup_{|t|\to +\infty} |\arg (L_h^{-1}(x_h+\textnormal{\Bi}t)-\alf^{-1}+2C_2+10)| \geq \pi/6$.
\end{itemize}
\end{lem}

\begin{proof}
Let $X'$ be the set of points $w\in \cc$ such that 
\[|\arg (w-2C_2-10)| \leq 5\pi/6, \text{ and } |\arg(w-1/\alf+2C_2+10)|\geq \pi/6.\]
The condition $\alpha\leq r_2$ implies that $X'$ is a connected set containing the points $2C_2+10$ and $\alpha^{-1}-2C_2 -10$. 
Moreover, for $w\in X'$, $|w-i/\alpha| \geq C_2+5$, for all $i\in \mathbb{Z}$.  
In particular, for $w \in X'$, $B(w, 5)\subset A_1 \subset A_2$. 
Then, by Lemma~\ref{L:derivative-of-L_h}-1, for $w\in X'$, $\arg (L_h'(w))$ belongs to $(-\pi/3, \pi/3)$. 

By the above paragraph, for every $w \in X'$, the argument of the tangent line to the vertical foliation passing through $w$ 
at $w$ belongs to $\pi/2+(-\pi/3, \pi/3)=(\pi/6, 5\pi/6)$, modulo $\pi$.    
In particular, $L_h^{-1}(s_1+\Bi \mathbb{R})$ and $L_h^{-1}(s_2+\Bi \mathbb{R})$ stay within $X' \subset X$, and spread from 
$-\Bi \infty$ to $+ \Bi \infty$.  
This implies parts 1 and 2 of the lemma. 

By the above paragraph, $L_h^{-1}(s_1+\Bi \mathbb{R})$ lies to the left of the curve $|\arg (w-2C_2-10)|= \pi/6$.
Also, since vertical leaves are disjoint, the vertical leaf passing through $\cv_{F_h}$, $L_h^{-1}(1+\Bi \mathbb{R})$, 
must lie to the left of  $L_h^{-1}(s_1+\Bi \mathbb{R})$. 
This implies part 3 of the lemma. 
Similarly, $L_h^{-1}(s_2+\Bi \mathbb{R})$ lies to the right of the curve $|\arg (w-\alpha^{-1}+2C_2+10)|= 5\pi/6$, 
and $L_h^{-1}(x_h+\Bi \mathbb{R})$ lies to the right of the leaf $L_h^{-1}(s_2+\Bi \mathbb{R})$. 
This implies part 4 of the lemma. 

By the second paragraph, $L_h^{-1}(s_1+ \Bi \mathbb{R})$ lies to the right of the curve $|\arg(w-2C_2-10)|= 5\pi/6$. 
By the uniform bound on $|F_h(w)-w-1|$ in Lemma~\ref{cyl-cond}, $L_h^{-1}(\Bi \mathbb{R})$ lies within 
uniformly bounded distance from $L_h^{-1}(s_1+ \Bi \mathbb{R})$.  
This implies part 5 of the lemma. 
Similarly, part 6 of the lemma follows from comparing $L_h^{-1}(x_h+ \Bi \mathbb{R})$ to the curve 
$L_h^{-1}(s_2+ \Bi \mathbb{R})$. 
\end{proof}

By Lemma~\ref{L:domain-of-E-precoordinate}-(5)-(6), the top end of $L_h^{-1}((0, x_h)+\Bi \mathbb{R})$ is contained in $A_1$.
Therefore, by Lemma~\ref{E:asymptote-of-L_h}, we have 
\begin{equation}\label{E:asymptote-of-L_h^{-1}}
\lim_{\substack{\Im w\to +\infty \\ \Re w\in (0, x_h)}} L_h^{-1}(w)-w = -\ell_h.
\end{equation} 

\begin{lem}\label{L:derivative-of-L_h^{-1}}
For all $\eps>0$ there is $M_\eps$ such that for all $\alf\in (0, r_2]$, all $h\in \QIS_\alf$, and all 
$\zeta\in ([0,x_h]+\textnormal{\Bi}\mathbb{R}) \setminus B(0, \eps)$, we have 
\[ M_\eps^{-1}  \leq |(L_h^{-1})'(\zeta)| \leq M_\eps.\]
\end{lem} 

\begin{proof}
By Lemma~\ref{L:domain-of-E-precoordinate}-3, $A_3\cap B(1/\alf, C_2)=\emptyset$ (in the definition of $A_3$, 
$L\cap R'=\emptyset$). 
Hence, by Lemma~\ref{cyl-cond}-1, on $X \setminus B(0, C_2)$, we have $|F_h'-1|< 1/4$.

By the pre-compactness of the class $\cup_{\alf\in [0,r_2]}\QIS_\alf$ and the continuous dependence of $L_h$ on 
$h$, there is $\eps'>0$ such that $L_h(X\cap B(\cp_{F_h},\eps')) \subset B(0, \eps)$.  
For the same reason, $|F_h'|$ is uniformly bounded from above and away from zero on $(B(0, C_2)\cap X)\setminus B(0, \eps')$. 

By Lemmas~\ref{L:domain-of-E-precoordinate}-2 and \ref{L:derivative-of-L_h}-3, when $s_1 \leq \Re \zeta \leq s_2$, 
$C^{-1}\leq |(L_h^{-1})'(\zeta)| \leq C$, where $C$ is a uniform constant.  

On the other hand, by Lemmas~\ref{cyl-cond}-2, \ref{L:extending-F_h}-4, and \ref{L:extending-L_h}, 
$s_1$ and $x_h-s_2$ are uniformly bounded from above. 
Thus, for $\zeta$ with $0 \leq \Re \zeta \leq x_h$, there is $j_\zeta\in \mathbb{Z}$, with $|j_\zeta|$ 
uniformly bounded from above, such that $s_1\leq \Re(\zeta- j_\zeta) \leq s_2$.
Then, the desired bounds on $|(L_h^{-1})'(\zeta)|$ follow from the functional equation 
$L_h\circ F_h= L_h+1$ in Lemma~\ref{L:extending-L_h} and the above upper and lower bounds on $|F_h'|$.
\end{proof}

Define the sets 
\begin{gather*}
B_0:=\{\zeta \in \cc \mid \Re \zeta \in [0, 1]\}, \; B_1:=\{\zeta \in \cc \mid \Re \zeta \in [x_h-1, x_h]\}.
\end{gather*}

\begin{lem}\label{L:change-of-coordinates-precoord}
For all $\alf \in (0, r_2]$, all $h \in \QIS_\alf$, and all $w \in L_h^{-1}(B_0)+1/\alf$ with $|\Im w| \geq 3C_2+5$, 
there is $l_w \in \mathbb{Z}$ such that $F_h\co{l_w}(w)$ is defined and belongs to $L_h^{-1}(B_1)$. 
Moreover, 
\begin{itemize}
\item[(1)] if $l_w\geq 0$, then for $0\leq j\leq l_w$ we have $F_h\co{j}(w)\in X$; 
\item[(2)] if $l_w < 0$, for $l_w\leq j\leq 0$, we have $F_h\co{j}(w)\in X$;  
\item[(3)] for every constant $c\geq 5$ and for every $w$ with $3C_2+5 \leq |\Im w|  \leq 3C_2+c$, 
$|l_w|$ is bounded from above by a constant depending only on $c$ (independent of $w$, $\alf$, and $h$);
\item[(4)] as $\Im w \to \pm \infty$, we have $\Im F_h \co{l_w}(w) \to \pm\infty$. 
\end{itemize} 
\end{lem}

\begin{proof}
By Lemma~\ref{L:domain-of-E-precoordinate}-3, $L_h^{-1}(B_0)+1/\alf$ lies to the left of the curve 
$|\arg (w-2C_2-10-1/\alf)|=\pi/6$, and $L_h^{-1}(B_1)$ lies to the right hand of $|\arg (w+2C_2+10-1/\alf)|=5\pi/6$. 
The curve $|\arg (w-2C_2-10-1/\alf)|=\pi/6$ intersect the left-hand boundary of $A_1$ at two points with imaginary parts 
$\pm 2C_2/(\sqrt{6}-\sqrt{2})+ C_2+5$. 
Note that $2C_2/(\sqrt{6}-\sqrt{2}) +C_2+5 \leq 3C_2+5$. 
Thus, the intersection of $L_h^{-1}(B_0)+1/\alf$ and $\{ w: |\Im w|\geq 3C_2+5\}$ is contained in $A_1$. 

By Lemma~\ref{L:extending-F_h}, the forward orbit and the backward orbit of  
every point in $A_1$ eventually leave $X$.
Combining with the above paragraph, the backward or the forward orbit of any $w\in L_h^{-1}(B_0)+1/\alf$ with 
$|\Im w| \geq 3C_2 +5$ must cross $L_h^{-1}(B_1)$. 
Moreover, the uniform estimate in Lemma~\ref{cyl-cond}-2 shows that for  
$w\in L_h^{-1}(B_0)+1/\alf$ with $3C_2+5 \leq |\Im w| \leq 3C_2+6$, $|l_w|$ is uniformly bounded from above by a 
constant depending only  on $C_2$. 

Part (4) of the lemma follows from the upper bound on $|\arg (F_h(w) -w)| \leq \pi/12$ in Equation~\eqref{E:slopes-of-X}
and the above argument. 
\end{proof}

For $\zeta$ near the top and bottom ends of $B_0$ let $w=L_h^{-1}(\zeta)+1/\alf$ and $m_\zeta= l_w$ be the integer defined 
in Lemma~\ref{L:change-of-coordinates-precoord}.  
For some of those $\zeta$, there may be more than one choice for $m_\zeta$, in which case, one may choose either one. 
Then, consider the map 
\begin{equation}\label{E:T_h}
T_h(\zeta):= L_h (F_h\co{m_\zeta} (L_h^{-1}(\zeta)+1/\alf)),
\end{equation}
near the two ends of $B_0$, with values in $B_1$. 

\begin{lem}\label{L:horn-map-inverted}
There is $\eta>1$ such that for all $\alf \in (0, r_2]$ and all $h\in \QIS_\alf$, the map 
\[T_h : \{\zeta\in B_0: |\Im \zeta| \geq \eta\}/ \mathbb{Z} \to B_1/\mathbb{Z}\] 
is defined and univalent. 
Moreover, 
\begin{itemize}
\item[(1)] $\Im T_h(\zeta)\to \pm\infty$, as $\Im \zeta\to \pm \infty$;
\item[(2)] for $\eta\leq |\Im \zeta| \leq \eta+1$, $|m_\zeta|$ is uniformly bounded from above independent of $\alf$, $\zeta$, and $h$.
\end{itemize}
\end{lem}

\begin{proof}
By the pre-compactness of the class $\QIS$ (see the proof of Lemma~\ref{L:derivative-of-L_h^{-1}}) there is 
$\eta>1$, independent of $\alf$ and $h$, such that for all $\zeta\in B_0$ with $|\Im \zeta| \geq \eta$, 
$|\Im L_h^{-1}(\zeta)| \geq 3C_2+5$. 
%Recall $s_1\in (0, x_h)$ with $2C_2+10\in L_h^{-1}(s_1+\Bi \mathbb{R})$, and that $s_1$ is uniformly bounded from 
%above by some constant say $s$ independent of $\alf$ and $h$. 
%By the proof of Lemma~\ref{L:derivative-of-L_h^{-1}}  and Lemma~\ref{L:derivative-of-L_h}, 
%$t\mapsto \Im L_h^{-1}(s_1+\Bi t)$ is a monotone function of $t\in \mathbb{R}$ whose derivative is uniformly away from zero. 
%Let $M_1$ be the constant from Lemma~\ref{L:derivative-of-L_h^{-1}} for $\eps=1$. 
%Then, there is $\eta>0$ such that $|\Im L_h^{-1}(s_1+\Bi t)| \geq 3C_2+5+ s M_1$, for $|t|\geq \eta$. 
%Then, for any $\zeta\in B_0$ with $|\Im \zeta| \geq \eta$, we use the bound $|(L_h^{-1})'|\leq M_1$ on the line 
%$\zeta+ (0, s_1-\Re \zeta)$ to conclude that $|\Im L_h^{-1}(\zeta)| \geq 3C_2+5$. 
%Since $|\Im L_h^{-1}(\zeta)| \to +\infty$ when $\Im \zeta \to +\infty$, and $\Im L_h^{-1}(\zeta)\to -\infty$ when 
%$\Im \zeta \to -\infty$, we must have, $\Im L_h^{-1}(\zeta) \geq 3C_2+5$ when $\Im \zeta \geq \eta$, and 
%$\Im L_h^{-1}(\zeta) \leq -3C_2-5$ when $\Im \zeta \leq -\eta$.
Combining this with Lemma~\ref{L:change-of-coordinates-precoord}, we conclude that $T_h$ is defined above the 
height $\eta$ and below the height $-\eta$. 
Moreover, by Equation~\eqref{lift-relation} and Lemma~\ref{L:extending-L_h}, $T_h(\zeta+1)=T_h(\zeta)+1$ 
when $\Re \zeta=0$. 
Therefore, $T_h$ projects to a well-defined map from $\{\zeta\in B_0: |\Im \zeta| \geq \eta\}/ \mathbb{Z}$ to $B_1/\mathbb{Z}$.  
Also, as $F_h$ and $L_h$ are univalent on $X$, $T_h$ must be univalent.

We have $\Im L_h(\zeta)\to \pm\infty$ when $\Im \zeta\to \pm \infty$ within $B_0$, 
and $\Im F_h\co{l_w}(w) \to \pm\infty$ as $\Im w\to \pm\infty$ by Lemma~\ref{L:change-of-coordinates-precoord}-4. 
This implies the asymptotic behavior of $T_h$ in part 1.
Part (2) of the lemma follows from Lemma~\ref{L:change-of-coordinates-precoord}-3 and 
the uniform bound on $|L_h'|$ in Lemma~\ref{L:derivative-of-L_h^{-1}}. 
\end{proof}

\begin{rem}
The map $T_h$ projects under $z\mapsto e^{2\pi \Bi z}$ to the inverse of $\rr'(h)$ restricted to the ball $B(0, e^{-2 \pi \eta})$, 
see Section~\ref{Inou-Shishikura-class}. 
The reason for considering this inverse here is that \textit{a priori} we do not know how large  
$\Dom T_h^{-1}$ is, that is, the values of $L_h^{-1}$ on $B_1$.   
%We have $\Im T_h(\zeta)-\Im \zeta\to 0$, as $\Im \zeta\to +\infty$.
%Now, using the above lemma and 1/4-Theorem, $\Dom T_h^{-1}$ must contain a ball of definite size.
\end{rem}

Recall the covering map $\tau_h$ and the relation $L_h= \Phi_h \circ \tau_h$ in Equations 
\eqref{E:covering-formula} and \eqref{linearize}. 
Define
\begin{equation}\label{E:y_h}
y_h:=\sup \{ t\in (0, x_h) \mid \tau_h  \text{ is univalent on } L_h^{-1}((0,t)+\Bi \mathbb{R})\}.
\end{equation}
We have $y_h\geq 1$. 
That is because, $\Phi_h$ and $L_h$ are univalent maps, and $\Phi_h(\p_h)\supset (0,1]+\Bi \mathbb{R}$. 

\begin{propo}\label{P:width-of-L_h(X)}
There is $k>0$ such that for all $\alf \in (0, r_2]$ and all $h \in \QIS_\alf$, we have 
\begin{itemize}
\item[(1)] $\alf^{-1} -k\leq y_h\leq \alf^{-1}$; 
\item[(2)] $x_h\leq \alf^{-1}+k $. 
\end{itemize}
\end{propo}

\begin{proof}
First we claim that 
\[b_h'=\sup\{ |m_\zeta|: \zeta\in B_0, |\Im \zeta| \geq \eta\}\] 
is uniformly bounded from above independent of $\alf$ and $h$. 
That is because, by Lemma~\ref{L:horn-map-inverted}, $T_h$ projects under $e^{2\pi\Bi z }$ and $e^{-2\pi\Bi z }$ to 
univalent maps on $B(0, e^{-2\pi \eta})$, denoted by $\hat{T}_{h, t}$ and $\hat{T}_{h,b}$ respectively, satisfying 
$\hat{T}_{h,t}(0)=\hat{T}_{h,b}(0)=0$. 
By the distortion Theorem~\ref{T:Distortions}, the image of any ray $\{re^{\Bi \theta} :  r\in (0, e^{-2\pi (\eta+1)})\}$, for fixed 
$\theta\in [0, 2\pi)$, under $\hat{T}_{h, t}$ and $\hat{T}_{h,b}$ have uniformly bounded spirals about $0$. 
(Indeed, using a sharp distortion theorem on the argument obtained from Loewner theory (see Thm 3.5 in \cite{Dur83}) 
the total spiral is bounded by $\log ((1+ e^{-2\pi(\eta+1)})/ (1-e^{-2\pi(\eta+1)})) \leq 2 e^{-2\pi(\eta+1)} \ll 2 \pi$. 
But we don't need this exact value here.)
In terms of the lift map $T_h$ and the integers $m_\zeta$, this means that 
$|m_\zeta-m_{\zeta'}|$ is uniformly bounded from above for $\zeta$ and $\zeta'$ in $B_0$ with 
$|\Im \zeta| \geq |\Im \zeta'| \geq \eta+1$.
On the other hand, when $|\Im \zeta|\in [\eta, \eta+1]$, $|m_\zeta|$ is uniformly bounded from above, 
independent of $\alf$ and $h$, by Lemma~\ref{L:horn-map-inverted}-2. 
This finishes the proof of the claim. 

Recall that $\tau_h$ is periodic of period $1/\alpha$. 
If $L_h^{-1}(B_0)+ 1/\alpha$ lies to the right of $L_h^{-1}(B_1)$, then $\tau_h$ is univalent on 
$L_h^{-1}((0,x_h) + \Bi \mathbb{R})$ and we have $y_h=x_h$. 
Otherwise, as we show below, $y_h$ is obtained from subtracting a uniformly bounded value from $x_h$.  

By Lemmas~\ref{L:domain-of-E-precoordinate} and \ref{L:derivative-of-L_h^{-1}}, 
$\inf \Re L_h^{-1}(\Bi[-\eta, \eta])$ is uniformly bounded from below, and 
$\sup \Re L_h^{-1}(x_h+\Bi[-\eta,\eta])- 1/\alpha$ is uniformly bounded from above, both independent of $\alpha$ and $h$. 
Then, Lemma~\ref{cyl-cond}-2 implies that there is a positive integer $j$, uniformly bounded from above, 
such that 
\[\sup \Re \big(F_h^{-j}(L_h^{-1}(x_h+\Bi [-\eta, \eta]))\big) < \sup \Re \big( (L_h^{-1}(\Bi [-\eta, \eta])+1/\alf)\big ).\]  
By the first paragraph, 
\begin{align*}
F_h^{-b_h'-2}(L_h^{-1}(x_h+\Bi [\eta, +\infty]) \cap (L_h^{-1}(\Bi \mathbb{R})+1/\alpha)= \emptyset \\
F_h^{-b_h'-2}(L_h^{-1}(x_h+\Bi (-\infty, -\eta]) \cap (L_h^{-1}(\Bi \mathbb{R})+1/\alpha)=\emptyset.
\end{align*} 
Therefore, there is $b_h>0$, uniformly bounded from above independent of $\alf$ and $h$, 
such that $L_h^{-1}(\Bi \mathbb{R})+1/\alf$ lies to the right of $L_h^{-1}(x_h-b_h+\Bi \mathbb{R})$. 
This implies that, $\tau_h$ is univalent on $L_h^{-1}((0,x_h-b_h)+\Bi \mathbb{R})$. 
That is, $y_h\geq x_h-b_h$.
Note that $x_h-b_h\geq 1$, since $L_h^{-1}(\Bi \mathbb{R})+1/\alf$ lies to the right of $L_h^{-1}(1+\Bi \mathbb{R})$. 
 
By Equation~\eqref{E:asymptote-of-L_h^{-1}}, $L_h^{-1}$ tends to a translation near the top end of 
$(0, x_h)+\Bi\mathbb{R}$, and $\tau_h$ is periodic of period $1/\alf$. 
Hence, $y_h\leq 1/\alf$. 
For the same reason, near the top end of $B_0$, $|m_\zeta - (x_h-1/\alf)| \leq 2$.
Hence, $x_h \leq 1/\alpha+2+b_h'$ and $y_h \geq x_h-b_h \geq 1/\alpha + m_\zeta - 2 -b_h \geq 1/\alpha -2 - b_h$, 
where $b_h$ is uniformly bounded from above.
\end{proof}

%A corollary of the above Proposition and Proposition~\ref{L:horizontal-level-sets} is the following estimate on the orbit of 
%the critical point of $F_h$.
%\begin{equation}\label{E:lift-of-critical-orbit}
%|F_h\co{j}(\cp_{F_h})-j|\leq C_4(2+\log j), \text{ for } j=1,2, \dots, \lfloor \alf^{-1} -\Bk \rfloor.
%\end{equation} 

The argument in the proof of Proposition~\ref{P:width-of-L_h(X)} through studying $T_h$ has a key consequence 
stated in the next proposition. 

\begin{propo}\label{P:value-at-end-by-L}
There is $C'>0$ such that for all $\alpha\in (0, r_2]$ and all $h\in \QIS_\alpha$, we have 
\[| L_h^{-1}(x_h)-1/\alpha|\leq C'.\] 
\end{propo}

\begin{proof}
Let $\eta\geq 1$ be the constant introduced in Lemma~\ref{L:horn-map-inverted}. 
For $\zeta \in B_0$ with $\Im \zeta\geq \eta$, define
$E_h(\zeta):=L_h \circ F_h^{-n_\zeta} \circ  F_h\co{m_\zeta}(L_h^{-1}(\zeta)+1/\alf)$,  
where $m_\zeta$ is the integer in Equation~\eqref{E:T_h} and $n_\zeta$ is the number of backward iterates 
required to map $F_h^ {m_\zeta} (L_h^{-1}(\zeta)+1/\alf)$ into $L_h^{-1}(B_0)$. 
The integer $n_\zeta$ exists because of Lemma~\ref{L:extending-F_h}. 
By Lemma~\ref{L:horn-map-inverted}, $E_h$ is defined above the height $\eta$, and its values belong to $B_0$.

By Equations~\eqref{E:choice-of-lift} and \eqref{E:asymptote-of-L_h^{-1}},  
$\Im E_h(\zeta)- \Im \zeta\to 0$ when $\Im \zeta\to +\infty$. 
Also, by Equation~\eqref{lift-relation} and Lemma~\ref{L:extending-L_h}, $E_h(\zeta+1)=E_h(\zeta)+1$, when $\Re \zeta=0$. 
Hence, $E_h$ projects under $e^{2\pi \Bi \zeta}$ to a well defined univalent map 
$\widetilde{E}_h: B(0, e^{-2\pi \eta}) \to \mathbb{C}$ satisfying $\widetilde{E}_h(0)=0$ and $|\widetilde{E}_h'(0)|=1$. 
By the distortion Theorem~\ref{T:Distortions}, $\widetilde{E}_h (B(0,e^{-2\pi \eta}))$ contains the ball $B(0, e^{-2\pi \eta}/4)$. 
Then, applying the distortion theorem to the map $\widetilde{E}_h^{-1}: B(0, e^{-2\pi \eta}/4) \to \mathbb{C}$ we conclude that 
$|\widetilde{E}_h^{-1}(e^{-4\pi \eta})|$ is uniformly bounded from above and away from $0$. 
This implies that $|\Im E_h^{-1}(2\eta\Bi)|$ must be uniformly bounded from above and below. 

On the other hand, by the pre-compactness of $\QIS$, on any given compact subset of $B_0$, 
$|\Im L_h^{-1}(\zeta)-\Im \zeta|$ is uniformly bounded from above.  
Combining with the above paragraph, we conclude that  $|\Im L_h^{-1} \circ E_h^{-1} (2\eta \Bi)| \leq C_1$, for some 
constant independent of $\alpha$ and $h$. 

Now we have, 
\begin{align*}
|\Im L_h^{-1}(x_h)| & \leq |\Im L_h^{-1} (n_\zeta)| + 2 M_1   \\
& \leq |\Im L_h^{-1}(n_\zeta + 2\eta\Bi )|+ M_12 \eta+ 2 M_1  \\
& = |\Im F_h\co{n_\zeta}( L_h^{-1}(2\eta\Bi))| + 2M_1 (\eta+1) \\ 
& \leq |\Im F_h\co{(-m_\zeta+n_\zeta)}( L_h^{-1}(2\eta \Bi))| + |m_\zeta| \frac{1}{4} + 2 M_1 (\eta+1) \\
&\leq  |\Im \big(F_h\co{(-m_\zeta+n_\zeta)}( L_h^{-1}(2\eta\Bi))-\frac{1}{\alpha}\big)| + b_h' \frac{1}{4} + 2M_1 (\eta+1) \\
& \leq |\Im L_h^{-1} \circ E_h^{-1}(2 \eta \Bi)|+  \frac{b_h'}{4} + 2M_1 (\eta+1) \\
& \leq C_1 +  \frac{b_h'}{4} + 2M_1 (\eta+1). 
\end{align*}

In the first line of the above equation we have used $n_\zeta\in (x_h-2, x_h)$ and Lemma~\ref{L:derivative-of-L_h^{-1}} with 
$\varepsilon=1$.
In the second line we have used Lemma~\ref{L:derivative-of-L_h^{-1}} with $\varepsilon=1$. 
In the third line we have used the functional equation~\eqref{equivariance}. 
In the fourth line we have used $|F_h(w)-w-1| \leq 1/4$ from Lemma~\ref{cyl-cond}. 
In the fifth line we have used $|m_\zeta|\leq b_h'$, where $b_h'$ is the constant in the proof of 
Proposition~\ref{P:width-of-L_h(X)}, which is uniformly bounded from above.  

On the other hand, by the definition of $x_h$, $L_h^{-1}(x_h+ \Bi \mathbb{R})$ touches the right hand boundary of $X$. 
Combining with the upper bound on $|\arg L_h^{-1}(x_h+ \Bi \mathbb{R})-\pi/2|$ in Lemma~\ref{L:domain-of-E-precoordinate}, 
we conclude that $|\Re L_h^{-1}(x_h)- 1/\alpha|$ is also uniformly bounded from above. This finishes the proof of the proposition. 
\end{proof}

%%%%%%%%%%%%%%%%%%%%%%%%%%%%%%%%%%%%%%%%%%%%%%%%%%%%%%%%%
\subsection{A uniform bound on $|L_h^{-1}(\zeta)-\zeta|$}\label{sec:uniform-bound-L_h}

\begin{propo}\label{P:horizontal-level-sets}
There exists a constant $C_4$ such that for every $\alf$ in $(0, r_2]$, every $h$ in $\QIS_\alf$, 
and every $t \in (0, x_h)$, 
\[|L_h^{-1}(t)-t|\leq  C_4 \min \{\log (2+t), \log (2+ x_h-t)\}.\]
\end{propo}

\begin{proof}
Within this proof all the constants $D_1, D_2, D_3, \dots$ are assumed to be independent of $\alpha$ and $h$. 

Let us define 
\[x_h'= \sup \{t\geq 0 \mid (0,t) \subset L_h(X)\}.\]
By definition, $x_h'\geq x_h$. 
However, by Lemma~\ref{cyl-cond}-2 and Proposition~\ref{P:value-at-end-by-L}, there is a constant $D_1$ 
such that $x_h'\leq x_h + D_1$ and $L_h^{-1}(x_h') \in B(1/\alpha, D_1)$.  

Let $l_1$ denote the vertical line $\Re w= C_2+5$, $l_2$ denote the vertical line $\Re w= 1/(2\alpha)$, and $l_3$ denote the 
vertical line $\Re w = 1/\alpha -C_2-5$. 
By Lemma~\ref{L:extending-L_h}, the closure of the curve $L_h^{-1}(0, x_h')$ connects the left hand boundary of 
$X$ to the right hand boundary of $X$. 
Let $t_1\in (0, x_h')$ be the smallest element with $L_h^{-1}(t_1) \in l_1$ and let $t_3\in (0, x_h')$ be the largest element with 
$L_h^{-1}(t_3) \in l_3$. 
By Lemma~\ref{cyl-cond}-2 and the above paragraph, there is a constant $D_2$ such that $t_1\leq D_2$, $x_h'-t_3\leq D_2$. 
For the same reason, there is also a constant $D_3$ such that 
\[|\Im L_h^{-1}(t)| \leq D_3, \forall t\in [0,t_1] \cup [t_3, x_h'].\]
So far we have shown that the inequality in the proposition holds for $t\in [0,t_1] \cup [t_3, x_h]$.  
Below we deal with values of $t\in [t_1, t_3]$. 

For $w \in X$ lying between $l_1$ and $l_3$ we have $B(w, 5) \subset A_1 \subset A_2$. 
Then by Lemma~\ref{L:derivative-of-L_h}-1, for $t\in (t_1, t_3)$, $|\arg (L_h'(L_h^{-1}(t)))| \leq \pi /3$ and 
$2/5 \leq |L_h'(L_h^{-1}(t))| \leq 8/3$. 
It follows that there is a unique $t_2\in (t_1, t_3)$ such that $L_h^{-1}(t_2) \in l_2$, and 
\begin{equation}\label{E:apriori-bound-location}
\begin{gathered}
\Re L_h^{-1} (t)\geq \Re L_h^{-1}(t_1)+ \frac{2}{5} \cos (\frac{\pi}{3}) (t-t_1)
\geq C_2+5 + \frac{1}{5}(t-t_1), \forall t\in (t_1, t_2), \\
\Re L_h^{-1}(t) \leq \Re L_h^{-1}(t_3) - \frac{2}{5} \cos (\frac{\pi}{3})(t_3-t) 
\leq \frac{1}{\alpha}  -C_2- 5 - \frac{1}{5} (t_3-t), \forall t\in (t_2, t_3), \\
|\Im L_h^{-1}(t)| \leq  |\Im L_h^{-1}(t_1)|+ \frac{8}{3} \sin (\frac{\pi}{3}) (t_3-t_1)\leq D_3 + \frac{4}{\sqrt{3}} (t_3-t_1), 
\forall t\in (t_1, t_3).
\end{gathered}
\end{equation}
In particular, by Proposition~\ref{P:width-of-L_h(X)}, $|\alpha \Im L_h^{-1}(t)| \leq \alpha (D_3 + 1/\alpha +k+D_1)
\leq D_3 + 1 + k+D_1$. 
Let $D_4= D_3 + 1 + k+ D_1$. 

On the other hand,
\begin{equation}\label{E:radii}
\begin{gathered}
d(L_h^{-1}(t), \mathbb{Z}/\alpha) = d(L_h^{-1}(t), 0) \geq \Re L_h^{-1}(t),\forall t\in (t_1, t_2), \\
d(L_h^{-1}(t), \mathbb{Z}/\alpha) = d(L_h^{-1}(t), 1/\alpha) \geq 1/\alpha - \Re L_h^{-1}(t), \forall t\in (t_2, t_3).
\end{gathered}
\end{equation}
Therefore, 
\[B(L_h^{-1}(t), \Re L_h^{-1}(t)-C_2  ) \subset A_1 , \forall t\in (t_1, t_2),\]
\[B(L_h^{-1}(t), \frac{1}{\alpha} - \Re L_h^{-1}(t) - C_2) \subset A_1 , \forall t\in (t_2, t_3),\]
Now, it follows from Lemma~\ref{L:derivative-of-L_h}-2 and Equations~\eqref{E:apriori-bound-location} 
and \eqref{E:radii} that there is a constant $D_5$, such that 
\begin{equation} \label{E:bound-on-derivative-real}
\begin{gathered}
|\Re L_h'(L_h^{-1}(t)) -1| \leq \frac{D_5}{5 + (t-t_1)/5}, \forall t\in (t_1, t_2)  \\
|\Re L_h'(L_h^{-1}(t)) -1| \leq \frac{D_5}{5 + (t_3-t)/5}, \forall t\in (t_2, t_3)
\end{gathered}
\end{equation}
and 
\begin{equation} \label{E:bound-on-derivative-imaginary}
\begin{gathered}
|\Im L_h'(L_h^{-1}(t)) | \leq \frac{D_5}{5+ (t-t_1)/5}, \forall t\in (t_1, t_2)    \\
|\Im L_h'(L_h^{-1}(t)) | \leq \frac{D_5}{5+ (t_3-t)/5}, \forall t\in (t_2, t_3)  
\end{gathered}
\end{equation}
Integrating the inequalities in \eqref{E:bound-on-derivative-real} we conclude that there is a constant $D_6$ such that for 
$t\in (t_1,t_2)$
\begin{equation*}
|\Re L_h^{-1}(t) -t|  \leq |\Re L_h^{-1}(t_1)-t_1|+\int_{t_1}^t \frac{D_5}{5+ (t-t_1)/5}\, dt  \leq D_6+D_6 \log (1+ (t-t_1)),
\end{equation*}
and for $t\in (t_2,t_3)$
\begin{equation*}
|\Re L_h^{-1}(t) -t| \leq |\Re L_h^{-1}(t_3)-t_3| + \int_{t}^{t_3} \frac{D_5}{5+ (t-t_1)/5}\, dt 
 \leq D_6 + D_6 \log (1 + (t_3-t)).  
\end{equation*}

Similarly, integrating the inequalities in \eqref{E:bound-on-derivative-imaginary} we obtain
\begin{align*}
|\Im L_h^{-1}(t)| & \leq D_3 + D_6 \log (1+ (t-t_1)), \forall t\in (t_1, t_2), \\
|\Im L_h^{-1}(t)| & \leq D_3+ D_6 \log (1+ (t_3-t)), \forall t\in (t_2, t_3).
\end{align*}
These imply the desired inequality in the proposition for $t\in [t_1, t_3]$, with a constant $C_4$ depending only on $D_3$ 
and $D_6$.  
\end{proof} 

For $z\in \mathbb{C} \setminus\{0\}$ there is an inverse branch of the covering map $\ex$ defined on a neighborhood of $z$. 
The derivative at $z$ of any such inverse branch is well-defined and is independent of the choice of the lift and 
the neighborhood. 
We let $(\ex^{-1})'(z)$ denote this complex number. 
As a corollary of Proposition~\ref{P:horizontal-level-sets} we obtain the following estimate on the derivative of $\Phi_h$, 
which is convenient to write in terms of $(\ex^{-1})'$.  
Recall the constant $k$ defined in Proposition~\ref{P:width-of-L_h(X)}.  

\begin{propo}\label{P:derivative-at-bottom-line}
There exists a constant $C_5$ such that for every $\alf$ in $(0, r_2]$, every $h$ in $\QIS_\alf$, and every 
$t$ with $1 \leq t \leq \min\{1/(2\alpha), x_h\}$, we have 
\[\frac{1}{C_4 t} \leq ( \ex^{-1} \circ \Phi_h^{-1})'(t)  \leq \frac{C_4}{t}.\]
\end{propo} 

\begin{proof}
By Proposition~\ref{P:width-of-L_h(X)}, $x_h \geq \alpha^{-1} -k$, for some uniform constant $k$. 
For large values of $\alpha$, $x_h$ may be less than $1/(2\alpha)$. 
The condition $1 \leq t \leq \min\{1/(2\alpha), x_h\}$ guarantees that $\Phi_h^{-1}(t)$ is defined.

We assume that the constants $D_i$, $i=1,2,3,\dots$, within this proof, are independent of $\alpha$ and $h$. 

Using Lemma~\ref{L:derivative-of-L_h^{-1}} with $\eps=1$, there is $D_1$ such that 
for all $t \in [1, x_h]$, $D_1^{-1} \leq |(L_h^{-1})'(t)| \leq D_1$.  
Therefore, it is enough to show that for some constant $D_2$ we have
\begin{equation}\label{E:uniforml-bound-on-lift}
\frac{1}{D_2 t} \leq (\ex^{-1} \circ \tau_h)'(L_h^{-1}(t)) \leq \frac{D_2}{t}.
\end{equation}
There are essentially two arguments to prove the above bounds. 
To introduce these, recall the constant $C_4$ in Proposition~\ref{P:horizontal-level-sets}. 
There is $t_0 \geq 1$ such that for $t \geq t_0$ we have $C_4 \log(2+t) \leq t/2$. 

\medskip

First we consider the case $\alpha \geq \min \{1/(5k), 1/(2t_0)\}$. 
By definition, for $t \in [1, x_h]$, $L_h^{-1}(t) \in X$, and for nonzero integers $n$, $X \cap B(n/\alpha, C_2)= \emptyset$. 
On the other hand, by Lemma~\ref{L:derivative-of-L_h^{-1}} with $\eps=1/2$, the image of the strip 
$1/2 \leq \Re \zeta \leq 1$ under $L_h^{-1}$ uniformly separates $L_h^{-1}(t)$ from $0$. 
That is, there is a constant $D_3$ such that for all $t\in [1, x_h]$, $|L_h^{-1}(t)| \geq D_3$. 
Therefore, for $t\in [1, x_h]$, $L_h^{-1}(t)$ is uniformly away from the set $\mathbb{Z}/\alpha$. 
On the other hand, since $x_h \leq \alpha^{-1}+k\leq 6k$ is uniformly bounded from above, 
by the pre-compactness of the class of maps $h$, for $t\in [1, x_h]$, $|L_h^{-1}(t)|$ is uniformly bounded from 
above independent of $t$, $\alpha$, and $h$. 
These imply that $|\tau_h(L_h^{-1}(t))|$ is uniformly bounded from above and away from $0$, by constants independent of 
$t$, $\alpha$, and $h$. 

By the above paragraph, and explicit estimates of the formula for $\ex^{-1} \circ \tau_h$, there is a constant $D_4$ such that 
$D_4^{-1} \leq |(\ex^{-1} \circ \tau_h)'(L_h^{-1}(t))| \leq D_4$.
In this case, $t$ is uniformly bounded away from $0$ and from above by $1 \leq t \leq x_h\leq \alpha^{-1}+k \leq 6k$. 
Therefore, one may adjust the constant $D_4$ to some uniform constant $D_2$ so that 
Equation~\eqref{E:uniforml-bound-on-lift} holds for $t\in [1, x_h]$. 

\medskip

Now assume that $\alpha \leq \min \{1/(5k), 1/(2t_0)\}$. 
Here, by Proposition~\ref{P:width-of-L_h(X)}, $1/(2\alpha) \leq x_h$, and $t_0 \leq 1/ (2\alpha)$. 
On the uniformly bounded subset $1\leq t \leq t_0$, the above argument may be repeated to conclude that 
$t \cdot |(\ex^{-1} \circ \tau_h)'(L_h^{-1}(t))|$ is uniformly bounded from above and away from $0$ on this interval. 
%Hence, we have the uniform bounds in Equation~\eqref{E:uniforml-bound-on-lift}.  
It remains to consider $t \in [t_0, 1/(2\alpha)]$.

For $t \in [t_0, 1/(2\alpha)]$ define the set
\[O_t:= \left \{\xi \in \cc:  |\Im \xi| \leq t/2, |\Re \xi-t| \leq t/2\right\}.\]
Using the uniform bound $|\sigma_h| \leq C_1 \alf$ in Equation~\eqref{E:bound-on-sigma}, and 
some explicit estimates of $\ex^{-1} \circ \tau_h$, there exists a constant $D_5$ such that 
\[\frac{1}{D_5 t}\leq |(\ex^{-1} \circ \tau_f)'(t)| \leq \frac{D_5}{t}.\]
For $1 \leq t \leq 1/(2\alpha)$, the modulus of the annulus $\Phi_f(\p_f) \setminus O_t$ is uniformly 
bounded away from zero, by a constant independent of $t$, $\alf$, and $f$.
Then, one infers from the above bounds and the distortion Theorem~\ref{T:Distortions} that there is a 
constant $D_6$ such that  for all $\xi \in O_t$, 
\[\frac{1}{D_6 t}\leq |(\ex^{-1} \circ \tau_f)'(\xi)|\leq \frac{D_6}{t}.\]
By Proposition~\ref{P:horizontal-level-sets} and the choice of $t_0$, for $t_0 \leq t \leq 1/(2\alpha)$, 
$L_f^{-1}(t) \in O_t$. 
Thus, we obtain the uniform bounds in Equation~\eqref{E:uniforml-bound-on-lift} for $t \in [t_0, 1/(2\alpha)]$.
\end{proof}

\begin{propo}\label{P:nearly-translation-pre-coord}
For every $M'\in \mathbb{R}$, there is $M>0$, such that for all $\alf\in (0, r_2]$, all $h\in \QIS_\alf$, and all 
$\zeta \in [0, x_h]+\textnormal{\Bi}[M', +\infty)$ we have 
\[|L_h^{-1}(\zeta)-\zeta|\leq M \log (1+1/\alf).\]
\end{propo}

\begin{proof}
All the constants $D_1, D_2, D_3, \dots$ within this proof are assumed to be independent of $\alpha$ and $h$. 
Without loss of generality we may assume that $M' \leq 0$. 
%The proof for positive values of $M'$ is a special case of the following argument, and also follows from the one for 
%negative values of $M'$. 

Recall the numbers $s_1$ and $s_2$ introduced in Equation~\eqref{E:s_1-s_2}, and define
\[Y_1=[0, x_h]+\Bi[M', +\infty), \quad Y_2=[s_1, s_2]+\Bi[M', +\infty). \]
By Lemmas~\ref{cyl-cond}-2, \ref{L:extending-F_h}-4, and \ref{L:extending-L_h}, 
there is a constant $D_1$ such that $s_1 \leq D_1$ and $x_h-s_2 \leq D_1$. 
In particular, by Lemma~\ref{cyl-cond}-2, and Equation~\eqref{equivariance}, it is enough to prove the uniform 
upper bound in the proposition for values of $\zeta\in Y_2$. 
On the other hand, by the maximum principle, it is enough to prove the inequality on the boundary of $Y_2$.  

Assume that $\zeta \in \partial Y_2$ with $\Re \zeta =s_1$. 
By Lemma~\ref{L:domain-of-E-precoordinate}-(2), 
$B(L_h^{-1}(\zeta), 5) \subset A_1 \subset A_2$. 
Then, by Lemma~\ref{L:derivative-of-L_h}-(1), $|\arg L_h'(L_h^{-1}(\zeta))| \leq \pi/3$ and 
$ 2/5 \leq |L_h'(L_h^{-1}(\zeta))| \leq 8/3$. 
Hence, the function $\Im \zeta \mapsto \Im L_h^{-1}(\zeta)$ is strictly monotone, and there is a constant $D_2$ such 
that  
\[\frac{8}{3} \Im \zeta +D_2  \geq \Im L_h^{-1}(\zeta) \geq \frac{2}{5} \sin \frac{\pi}{3} \cdot (\Im \zeta) -D_2.\] 
This implies that there are $t_1 \leq t_2$, with $|t_1|$ and $\alpha |t_2|$ uniformly bounded from above, 
such that $\Im L_h^{-1}(s_1 + \Bi t_1) \geq 3.25$ and $\Im L_h^{-1}(s_1 + \Bi t_2) \geq 1/(2\alpha)$. 
By Lemma~\ref{L:derivative-of-L_h}-(2), there is a constant $D_3$ such that for all $t\in [t_1,t_2]$, 
$|(L_h^{-1})'(\zeta) - 1 | \leq D_3/\Im \zeta$.
Integrating this inequality we conclude that there is a constant $D_4$ such that for $\Im \zeta \in [t_1, t_2]$, 
$|L_h^{-1}(\zeta)- \zeta|$ is bounded from above by a uniform constant times $\log \alpha^{-1} + D_4$. 

On the other hand, by Proposition~\ref{fine-tune} with $M'=1/2$ and $r=1/2$, 
the map $L_h$ is uniformly close to a translation on vertical lines with imaginary part bigger than $1/(2\alpha)$, 
with an error bounded by $M'/r=1$. 
It follows from Lemma~\ref{L:derivative-of-L_h^{-1}} that for $t \geq t_2$, 
the function $t\mapsto L_h^{-1}(s_1+ \Bi t)$ is uniformly close  to a translation. 
Combining this with the above paragraph, we conclude there is a constant $D_5$ such that 
$|L_h^{-1}(\zeta)-\zeta| \leq D_5 \log (1+ 1/\alpha)$, for $\zeta$ with $\Re \zeta=s_1$ and $\Im \zeta\geq t_1$. 

By an identical argument, one can show that there are $t'_1$, with $|t_1'|$ uniformly bounded from above, 
and a constant $D_6$ such that $|L_h^{-1}(\zeta)-\zeta| \leq D_6 \log (1+ 1/\alpha)$, 
for $\zeta$ with $\Re \zeta=s_2$ and $\Im \zeta\geq t_1'$. 

Let $D_7= \max\{t_1, t_2\}$ and assume that $\zeta \in \partial Y_2$ with $\Im \zeta \leq D_7$.
Recall that $s_1>1$ by Lemma~\ref{L:domain-of-E-precoordinate}-(1). 
By Lemma~\ref{L:derivative-of-L_h^{-1}} with $\varepsilon=1$, Proposition~\ref{P:width-of-L_h(X)}, and 
Proposition~\ref{P:horizontal-level-sets},  
\begin{align*}
|L_h^{-1}(\zeta)-\zeta| &\leq |L_h^{-1}(\zeta) - L_h^{-1}(\Re \zeta)| +  |L_h^{-1}(\Re \zeta) - \Re \zeta| + | \Re \zeta- \zeta| \\
&\leq M_1 \max\{M',D_7\}+ C_4 \log (2+ \alpha^{-1}+ k) +\max\{M',D_7\}.
\end{align*}
This implies the desired inequality, with a uniform bound, on the boundary of $Y_2$ with imaginary part less than $D_7$.

By Equation~\eqref{E:asymptote-of-L_h^{-1}}, $|L_h^{-1}(\zeta)-\zeta|$ tends to a constant near the top end of $Y_2$. 
The limit is independent of $\zeta$, and, by the above paragraphs, it is bounded from above by a uniform constant times 
$\log (1+ \alpha^{-1})$ along $\Re \zeta=s_1$.  
Then, $|L_h^{-1}(\zeta)-\zeta|$ is bounded by a uniform constant times $\log (1+ \alpha^{-1})$ near the top end of $Y_2$. 
Combining with the above paragraphs, the desired inequality with a uniform bound is proved on the boundary of 
$Y_2$. 
This finishes the proof of the proposition. 
\end{proof}

The above proposition gives an upper bound on the asymptotic translation of $L_h^{-1}$ in 
Equation~\eqref{E:asymptote-of-L_h^{-1}}, which we state below for reference purposes.

\begin{cor}\label{C:size-of-translation}
There is $C>0$ such that for all $\alf \in (0, r_2]$ and all $h\in \QIS_\alf$, we have 
\[|\ell_h| \leq C \log (1+\alf^{-1}).\] 
\end{cor}

\begin{rem}\label{R:logarithmic-error}
When $\alf$ tends to zero, $h(z):=P\circ \phi^{-1}(\ea \cdot z)$ tends to a map $h_0$ with a parabolic fixed point at $0$. 
Then, $L_h$ tends to some univalent map $L_{h_0}$ which is the lift of the attracting Fatou coordinate of $h_0$ under 
the change of coordinate $w=-2/(h''(0)z)$. 
It is known that $L_{h_0}$ has asymptotic expansion $w+a\log w+c+o(1)$ near $+\Bi\infty$,
for some constants $a$ and $c$, for instance see~\cite[Prop. 2.2.1]{Sh00}. 
So, it seems the logarithmic bound in Propositions~\ref{P:horizontal-level-sets} and \ref{P:nearly-translation-pre-coord} 
is necessary. 
The main point used in this paper is that since $\alf \log (1+1/\alf)\to 0$ as $\alf\to0$, the logarithmic error is 
absorbed in the formula of $\tau_h$ as the rotation numbers degenerate along the renormalization tower.
\end{rem}

%%%%%%%%%%%%%%%%%%%%%%%%%%%%%%%%%%%%%%%%%%%%%%%%%%%%%%%%%%%%
\subsection{Geometry of the petals}\label{sec:geometry-petals}
Recall that $L_h= \Phi_h \circ \tau_h$, where $\tau_h$ is given by the formula~\eqref{E:covering-formula}, 
$L_h$ and its domain of definition $X$ are defined in Section~\ref{SS:Fatou-coordinate-of-F_h}, and $\Phi_h$ is the 
Fatou coordinate defined in Theorem~\ref{Ino-Shi1}.
In this section we employ the estimates on $L_h$ established in Sections~\ref{sec:estimates-L_h}, \ref{sec:width-of-L_h(X)}, 
and \ref{sec:uniform-bound-L_h}, as well as some explicit estimations of $\tau_h$ to prove the geometric properties of the 
maps $\Phi_h$, as well as their domain and range, stated in Propositions~\ref{P:uniformly-bounded-width-spiral}, 
\ref{P:turning}, \ref{P:smallsector}, and \ref{P:r-ball}.

\begin{proof}[Proof of Proposition~\ref{P:uniformly-bounded-width-spiral}]
Let $r_2$ be the constant defined in Equation~\eqref{E:r_2}. 
Let us fix $h\in \QIS_\alpha$, $\alpha\in (0, r_2]$. 
By the definitions of the constants $x_h \geq y_h$ in Equations~\eqref{E:x_h} and \eqref{E:y_h}, the map 
$L_h^{-1}$ is defined on $(0, y_h)+ \Bi \mathbb{R}$, and $\tau_h$ is univalent on $L_h^{-1}((0, y_h)+ \Bi \mathbb{R})$. 
Moreover, by Lemma~\ref{L:extending-L_h}, $L_h^{-1}$ is univalent on $(0, y_h)+ \Bi \mathbb{R}$. 
We define 
\[\mathcal{P}_h=\tau_h \circ L_h^{-1}((0, y_h)+ \Bi \mathbb{R}), \Phi_h=L_h \circ  \tau_h^{-1}: 
\mathcal{P}_h \to \mathbb{C}.\] 
By Proposition~\ref{P:width-of-L_h(X)}, $y_h \geq \alpha^{-1}-k$, where $k$ is a constant independent of $\alpha$ and $h$. 
This implies part 2 of the proposition by defining $\Bk=k$. Below we prove part 1 of the proposition. 

Since $\mathcal{P}_h$ is simply connected and $0 \notin \mathcal{P}_h$, there is a continuous branch of 
argument defined on $\mathcal{P}_h$. 

The map $L_h$ has a holomorphic extension onto a neighborhood of $L_h^{-1}(\Bi \mathbb{R})$ using the functional equation 
$L_h(F_h(\zeta))= L_h(\zeta)+1$.  
Then, $L_h^{-1}(\Bi \mathbb{R})$ intersects the real axis at least once and at most at a finite number of points. 
There is $t_1\in \mathbb{R}$ such that $L_h^{-1}(t_1 \Bi)$ is the closest point to $0$ among all such intersections.
That is, $L_h^{-1}(t_1 \Bi)\in \mathbb{R}$. 
Moreover, there is $t_2> t_1$ such that for $t \geq t_2$, $L_h^{-1}(t\Bi) \notin B(0, C_2)$, where $C_2$
is the constant in Lemma~\ref{cyl-cond}. 
By the normalization of $L_h$, $L_h^{-1}(0)=\cp_{F_h} \in B(0, C_2)$, where $\cp_{F_h}$ denotes the critical point of $F_h$. 
It follows from the pre-compactness of the class of maps $h$ that $|t_1|$ and $t_2$ are 
uniformly bounded from above independent of $h$ and $\alpha$. 

Let us define the curve $\gamma_1$ as the interval $(0, L_h^{-1}(t_1 \Bi)]\subset \mathbb{R}$, 
the curve $\gamma_2$ as $L_h^{-1}(t \Bi)$ for $t\in [t_1, t_2]$, and the curve $\gamma_3$ as $L_h^{-1}(t \Bi)$ 
for $t\in [t_2, \infty)$. 
We denote the union of these curves by $\gamma$. 
Since $L_h^{-1}((0, y_h)+ \Bi \mathbb{R}) \subset X$ and $X \cap B(n, C_2)= \emptyset$, for integers $n\neq 0$, 
$\tau_h(\gamma)$ is a simple curve in $\mathbb{C} \setminus \mathcal{P}_h$.
Moreover, $\tau_h(\gamma)$ belongs to $\mathbb{C}\setminus \{0\}$ and tends to $0$ and infinity at its two ends.  
To prove the existence of a constant $\hat{\Bk}$ for part 1 of the proposition, it is enough to show that for any continuous 
branch of argument defined on $\tau_h(\gamma)$, $\sup |\arg w - \arg w'|$, for $w,w' \in \tau_h(\gamma)$, 
is uniformly bounded from above by a constant independent of $h$ and $\alpha$.

The curve $\gamma_1$ is mapped to a half-line under $\tau_h$ tending to infinity in $\mathbb{C}$. 
Hence, $\sup |\arg w - \arg w'|$, for $w,w' \in \tau_h(\gamma_1)$ is bounded by $\pi$. 
On the other hand, for every $h$, $\sup |\arg w - \arg w'|$, for $w,w' \in \tau_h(\gamma_2)$ is finite. 
Hence, by the pre-compactness of the class of maps $h$, there is a uniform upper bound on these numbers 
independent of $h$ and $\alpha$.  

Using Proposition~\ref{P:nearly-translation-pre-coord} with $M'=0$, there is $M>0$ such that for $t\geq t_2$, 
$|L_h^{-1}(t \Bi)-t\Bi | \leq M \log (1+1/\alpha)$. 
Therefore, $\gamma_3$ is contained in the set 
\[[-M \log (1+1/\alpha), M \log (1+1/\alpha)] + \Bi [-M \log (1+1/\alpha), +\infty).\] 
Recall that $\tau_h$ is periodic of period $1/\alpha$, and it maps every vertical line to an arc of a circle connecting $0$ to 
$\sigma_h$ (Each such arc segment spirals at most by $\pi$ about $0$). 
Since $\alpha \log (1+\alpha^{-1})$ is uniformly bounded from above, it follows that $\sup |\arg w - \arg w'|$, 
for $w,w' \in \tau_h(\gamma_3)$, is uniformly bounded from above by a constant depending only on $M$. 
\end{proof}

\begin{proof}[Proof of Proposition~\ref{P:turning}]
Fix $h$ with $h'(0)=\ea$ and $\alpha\in (0, r_3]$. 
By Equation~\eqref{E:asymptote-of-L_h^{-1}}, each curve $\tau_h\circ L_h^{-1}(t+\Bi \mathbb{R})$, 
$t\in [0, 1/\alf-\Bk]$, approaches zero with a well-defined tangent at $0$.
This implies that if $w\in \C_h\cup \Csh_h$ is close enough to zero, there exists a unique inverse orbit 
$w, h^{-1}(w), \dots, h^{-j}(w)$ staying near zero such that $j$ is the smallest positive integer with $h^{-j}(w)\in \p_h$. 
Comparing with the rotation of angle $\alf$, one can see that for $|w|$ small enough $\Bk+1 \leq  j \leq \Bk+2$.
%Since for $k_h$ we have $\C_h^{-k_h}\cup(\Csh_h)^{-k_h}\subseteq \p_h$, we must have $k_h\geq \Bk$.

By Theorem~\ref{Ino-Shi2}, $\rr (h)$ is of the form $z \mapsto P \circ \psi^{-1}(e^{2\pi \Bi/\alf}\cdot z)$, 
where $\psi:U \to \mathbb{C}$ is a univalent mapping that has a univalent extension 
onto the larger domain $V$ which contains the closure of $U$. 
By the distortion Theorem~\ref{T:Distortions}, $\rr(h)$ must be uniformly close to a rotation on $U$, 
with a constant independent of $h$ and $\alpha$. 
Moreover, we also conclude that the pre-image (under $\rr(h)$) of any ray in $P(U)$ landing at $0$ must have 
uniformly bounded spiral about zero. 
Thus, any lift of $\rr(h)$ under $\ex$ must be uniformly close to some translation, with the bound independent 
of $h$ and $\alpha$.  
This implies that  $|k_h-j|$, for any $j$ as above, is uniformly bounded from above. 
In particular, $k_h \leq |k_h-j|+ j$ is uniformly bounded from above independent of $h$ and $\alpha$. 
\end{proof}

\begin{proof}[Proof of Proposition~\ref{P:smallsector}]
Recall that for $\alpha\leq r_3$ the sector $S_f$ is defined (see Theorem~\ref{Ino-Shi2}).
Let $M$ denote the constant produced by Proposition~\ref{P:nearly-translation-pre-coord} applied with $M'=-2$. 
We consider two cases separately. 

Recall the constant $k$ from Proposition~\ref{P:width-of-L_h(X)}. 
The first case is to assume that $\alf$ is small enough such that  
\[\alf\leq 1/(3+2k) \text{ and } 3/2+M \log (1+1/\alf)\leq 1/(4\alf).\] 
Define the set
$A:= [\lfloor 1/(2\alf)\rfloor+1/2, \lfloor 1/(2\alf)\rfloor+3/2]+ \Bi [-2, +\infty)$. 
By definitions, $A=\Phi_f \circ f\co{(k_f+\lfloor 1/(2\alf)\rfloor)}(S_f )$,  and by the above condition on $\alf$, 
$A$ is contained in $[0, y_h]+ \Bi [-2/\alf, \infty)$. 
By Proposition~\ref{P:nearly-translation-pre-coord} with $M'=-2$, for all $\zeta\in A$ we have 
\[\Im L_h^{-1}(\zeta)\geq -2 -M \log (1+1/\alf),\; |\Re L_h^{-1}(\zeta)-1/(2\alf)| \leq 3/2+M \log (1+1/\alf).\] 
Now, using the second condition on $\alf$, the uniform bound $|\sigma_f| \leq C_1 \alf$ in Equation~\eqref{E:bound-on-sigma}, 
and an explicit calculation of the formula for $\tau_f$, there is a constant $M_1'$ (depending only on $M$) such that 
\begin{equation*}%\label{small-diam}
\diam (f\co{(k_f+\lfloor 1/2\alf \rfloor)}(S_f)) 
 = \diam (\Phi_f^{-1}(A)) 
 = \diam (\tau_f\circ L_f^{-1}(A))
 \leq M_1'\cdot \alf.
\end{equation*}
Here, when estimating $\tau_f$, one uses that $\alf\log (1+1/\alf)$ is uniformly bounded from above on $(0,1)$. 

The second case is to assume that $\alf$ is larger than some constant, but still less than $r_3$. 
Recall the constant $s_1$ defined in Equation~\eqref{E:s_1-s_2}, and define 
$A:=[\lfloor s-1/2\rfloor, \lfloor s+1/2\rfloor] + \Bi[-2, +\infty)$.  
By Lemmas~\ref{L:domain-of-E-precoordinate}-2 and \ref{L:derivative-of-L_h}-1, for all $t\in \mathbb{R}$, 
$|\arg L_h'(L_h^{-1}(s_1+\Bi t))| \leq \pi/3$.  
Then, by Lemmas~\ref{cyl-cond}, $L_h^{-1}(A)$ must be contained within $5/4$ of the set 
$|\arg(w-2C_2-10)-\pi/2| \leq \pi/3$. 
Also, as in the previous case, $A$ lies above the line $\Im w=-2-M\log (1+1/\alf)$. 
Then, an explicit calculation on $\tau_f$ shows that for all $\zeta \in A$, $|\tau_f\circ L_f^{-1}(\zeta)|\leq M_1''$, 
for some constant $M_1''$, independent of $\alf$ and $f$. 
Here we only use that $|\sigma_f|$ is uniformly bounded from above independent of $\alpha$ and $f$ 
(Indeed, $|\sigma_f| \leq 4/27$ is assumed for $\alpha \leq r_2'$ in the proof of Lemma~\ref{cyl-cond}).
Because $\alf$ is bounded from below in this case, one can adjust the constant $M_1''$ such that 
$|\tau_f\circ L_f^{-1}(\zeta)|\leq M_1'' \alpha$ holds. 
\end{proof}

\begin{proof}[Proof of Proposition~\ref{P:r-ball}]
There are two arguments; one for values of $\alpha$ near $0$ and one for values of $\alpha$ away from zero. 
First we present the former case, where we impose a number of upper bounds on $\alpha$ along the way so that 
the proof works. 
The second case is based on a pre-compactness argument, and is presented at the end of this proof.

All the constants $D_1, D_2, D_3, \dots$ introduced within this proof are assumed to be independent of $\alpha$ 
and $f \in \QIS$. 

Consider the line segment 
\[\vtet(t):=t-(2+t/2)\Bi, \text{ for } t\in [2, 1/(2\alf)].\]
Let $\hat{\eta}_f:\p_f \to \cc$ be an arbitrary inverse branch of the covering map $\ex$, and define 
\begin{equation} \label{lift}
\chi_f:=\hat{\eta}_f \circ \Phi_f^{-1}: \Phi_f(\mathcal{P}_f)\to \mathbb{C}. 
\end{equation} 
We shall fixed the choice of the branch of $\hat{\eta}_f$ in a moment, but until then, all statements involving $\hat{\eta}_f$
are independent of the choice of the branch. 

For 
\begin{equation}\label{E:alpha-condition-1}
\alpha \leq \frac{1}{4\Bk}, 
\end{equation}
by Proposition~\ref{P:width-of-L_h(X)}, the image of $\vtet$ is contained in the domain of $\chi_f$. 

\begin{sublem}\label{long-lifts}
There is $D_1>0$ such that 
\[|\Im \chi_f(\vtet(2))|\leq D_1,\quad
\Im \chi_f(\vtet(\frac{1}{2\alf}))\geq \frac{1}{2\pi}\log \frac{1}{\alf}- D_1.\] 
\end{sublem}

\begin{proof}
There is a topological annulus $A$ in $\Phi_f(\p_f)$ which separates the pair of points $1$ and $\vtet(2)$ from a 
neighborhood of $+\Bi \infty$ and its modulus is uniformly bounded away from $0$, independent of $\alpha$ and $f$. 
Recall that $\chi_f(1)\in \mathbb{Z}$. 
The univalent map $\chi_f$ lifts $A$ to an annulus of the same modulus in $(\cc \setminus \mathbb{Z}) \cup \{\chi_f(1)\}$, 
which encloses the pair of points $\chi_f(1)$ and $\chi_f (\vtet(2))$. 
As the modulus of this annulus is uniformly bounded away from $0$, $|\chi_f(1) -  \chi_f(\vtet(2))|$ must be 
uniformly bounded from above. This implies the first inequality in the sublemma. 

By Proposition~\ref{P:nearly-translation-pre-coord} (with $M'=0$) 
and an explicit estimate on $\tau_f$, there is a constant $D_2$ such that 
$|\Phi_f^{-1}(1/(2\alf))| \leq D_2 \alf$.   
Therefore, 
\begin{equation*}
\Im \chi_f(\frac{1}{2\alf})) \geq \frac{1}{2\pi}\log \frac{1}{\alf}-\frac{1}{2\pi} \log \frac{27 D_2}{4}.
\end{equation*}
Let $\gamma$ denote the line segment connecting $1/(2\alpha)$ to $\vtet(1/ (2\alpha))$. 
Then, by Equation~\eqref{E:alpha-condition-1}, the modulus of the annulus $\Phi_f(\p_f) \setminus \gamma$ 
is uniformly bounded away from $0$ by a constant independent of $\alpha$ and $f$. 
By proposition~\ref{P:derivative-at-bottom-line} and the distortion Theorem~\ref{T:Distortions}, $|\chi_f'|$ on $\gamma$ 
is bounded from above by a uniform constant times $\alpha$. 
This implies that the length of the curve $\chi_f(\gamma)$ is uniformly bounded from above. 
Combining this with the above inequality we conclude the second inequality in the sublemma.
\end{proof}

\begin{sublem}\label{nice-derivative}
There is $D_3>0$ such that for all $t\in [2, 1/(2\alpha)]$ we have 
\[\frac{1}{D_3t}\leq |\chi_f'(\vtet(t))|\leq \frac{D_3}{t}.\]
\end{sublem}

\begin{proof}
First note that by Equation~\eqref{E:alpha-condition-1} and Proposition~\ref{P:width-of-L_h(X)},
$1/(2\alpha) \leq x_h$.  
For each $t$ in $[2, 1/(2\alpha)]$, let $\gamma_t$ denote the line segment connecting $t$ to $\vtet(t)$. 
For $\alpha$ satisfying Equation~\eqref{E:alpha-condition-1}, the modulus of the annulus $\Phi_f(\p_f)\setminus \gamma_t$
is uniformly bounded away from $0$. 
Thus, by the uniform bounds in Proposition~\ref{P:derivative-at-bottom-line} and the distortion Theorem~\ref{T:Distortions}, 
we obtain the uniform bound in the sublemma. 
\end{proof}

%What happens to this 
%\begin{equation}\label{E:alpha-condition-2}
%\alf \leq \frac{3}{16\Bk+20},
%\end{equation}

When
\begin{equation}\label{E:alpha-condition-3}
\alf \leq \frac{1}{4(\Bk''+\Bk+2)},
\end{equation}
for every $t\in [2, 1/(2\alpha)$, we have 
\begin{equation}\label{empty-balls}
\begin{gathered}
B(\vtet(t),t/2) \subset \{w\in \cc: 1 \leq \Re(w)\leq \alf^{-1}-\Bk-\Bk''-2, \Im w \leq -2\},\\
B(\vtet(t),t/2)+1 \subset \{w\in \cc: 1 \leq \Re(w)\leq \alf^{-1}-\Bk-\Bk''-1, \Im w \leq -2\}.
\end{gathered}
\end{equation}
In particular, $\chi_f$ is defined and univalent on $B(\vtet(t), t/2)$. 
By Sublemma~\ref{nice-derivative} and the Koebe 1/4-theorem, 
\begin{equation}\label{lift-of-big-balls} 
B \big (\chi_f(\vtet(t)), \frac{1}{8 D_3}\big ) \subseteq \chi_f \big(B(\vtet(t),t/2)\big).
\end{equation}
Let us define 
\[D_4:=\min \{1/(8D_3), 1/4\}.\]

\begin{figure}
\begin{center}
\begin{pspicture}(1,-.6)(12.5,8)  %\psgrid
\epsfxsize=5cm
\rput(8.6,5.5){\epsfbox{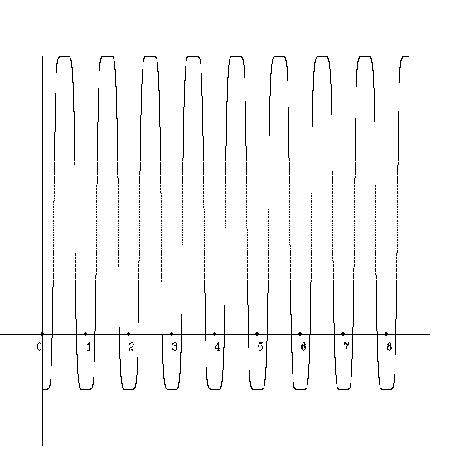}}
\psdot[dotsize=.9pt](7.9,6.8)
\rput(8.2,6.8){$\zeta$}
%\pscircle[fillstyle=solid,fillcolor=lightgray, linewidth=.5pt](5.6,5.){.07}
\rput(5.1,7.4){\small{$\simeq \log \frac{1}{\alf}$}}
\rput(10.6,3.4){\tiny{$\frac{1}{\alf}-\Bk$}}
\epsfxsize=4cm
\rput(4,1.5){\epsfbox{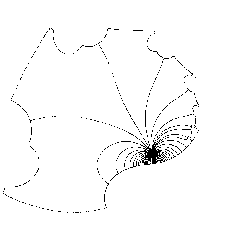}}
\rput(1.5,1.5){$f$}
\psline[linewidth=.03]{->}(6.2,4.2)(5.4,3.4)
\rput(5.5,4){$\ex$}

\psline[linewidth=.01]{->}(6.6,.2)(6.6,3)
\psline[linewidth=.01]{->}(6.4,2)(12.5,2)

\psline[linewidth=.01,linecolor=blue](7,1.4)(10,.6)
\pscircle[linewidth=.02,linecolor=red](7,1.4){.15}
\pscircle[linewidth=.02,linecolor=red](7.55,1.22){.3}
\pscircle[linewidth=.02,linecolor=red](8.45,1){.55}
\pscircle[linewidth=.02](10,.6){.95}
\rput(7.3,1.5){\small{$\dots$}}

\psline[linewidth=.005,linestyle=dashed](6.6,1.55)(12.5,1.55)
\rput(6.4,1.6){\tiny{-2}}
\rput(12.3,2.2){\tiny{$\frac{1}{\alf}-\Bk$}}
\psline[linewidth=.01](6.55,1.4)(6.65,1.4)
\psline[linewidth=.01](7,1.95)(7,2.05)
\rput(6.4,1.4){\tiny{-3}}
\rput(7,2.1){\tiny{2}}

\pscurve[linewidth=.02,linecolor=blue](4.1,.53)(4.27,.6)(4.45,.65)(4.6,.68)
\pscircle[linewidth=.02,linecolor=red](4.1,.53){.09}
\pscircle[linewidth=.02,linecolor=red](4.27,.6){.09}
\pscircle[linewidth=.02,linecolor=red](4.45,.65){.09}
\pscircle[linewidth=.02](4.6,.68){.07}

\pscurve[linewidth=.01,linecolor=blue](7.72,4.1)(7.7,5.2) (7.7,7.2)
\pscircle[linewidth=.03,linecolor=red](7.72,4.1){.09}
\pscircle[linewidth=.03,linecolor=red](7.7,5.5){.09}
\pscircle[linewidth=.03,linecolor=red](7.7,6.8){.08}
\pscircle[linewidth=.03](7.7,7.2){.08}
\rput(3.2,.6){\tiny{$\times$}}
\end{pspicture}
\caption{The figure shows a cartoon of the lift of the sectors under $\ex$, and the balls in their complement.}
\label{complementary-Balls}
\end{center}
\end{figure}

Let $\zeta_0 \in \mathbb{C}$ be an arbitrary point that satisfies the hypothesis of the proposition. 
We have 
\begin{equation}\label{E:upper-bound-Im-zeta}
\Im \zeta_0 \leq \frac{1}{2\pi}\log \frac{1}{\alf}+E.
\end{equation}
Let us assume that $\zeta_0$ also satisfies 
\begin{equation}\label{E:lower-bound-Im-zeta}
\Im \zeta_0\geq 1.
\end{equation} 
(We shall deal with $\Im \zeta_0 \leq 1$ in a moment.)

By Sublemma~\ref{long-lifts}, $\Im \chi_f (\vtet(2))\leq D_1$ and 
$\Im \chi_f (\vtet(1/(2\alpha))) \geq (2\pi)^{-1} \log \alpha^{-1} -D_1$. 
If we assume that 
\begin{equation}\label{E:alpha-condition-4}
\alf\leq e^{-4\pi D_1-2\pi},
\end{equation} 
then 
\begin{equation*}
\frac{1}{2\pi} \log \frac{1}{\alpha} -D_1 \geq D_1+1.
\end{equation*}
Then, it follows from Equations~\eqref{E:upper-bound-Im-zeta} and \eqref{E:lower-bound-Im-zeta} that 
there exists $t' \in [2, 1/(2\alpha)]=\Dom \vtet$, such that 
\begin{equation}\label{E:intermediate-height-lift}
\Im(\chi_f(\vtet(t'))) \geq 1, \quad  -D_1+1 \leq \Im \zeta_0 -\Im \chi_f(\vtet(t')) \leq D_1+E.
\end{equation}
Note that $(\Phi_f \circ  \ex)^{-1}(\vtet(t'))$ forms a $1$-periodic set of points. 
Then, there is a choice of the branch of $\hat{\eta}_f$ such that  
\begin{equation}\label{E:1/2-close-real-parts}
\left | \Re \zeta_0 - \Re \chi_f(\vtet(t'))\right| \leq 1/2.
\end{equation} 
From here on we shall fix this choice of the inverse branch $\chi_f$. 
See Figure~\ref{complementary-Balls}.

Let us define the curve 
\[\gamma(s)=(1-s) \zeta_0+s\chi_f(\vtet(t')), s\in [0,1].\]
Fix an arbitrary $\delta_1\in (0,1/16)$. 

Since $D_4\leq 1/4$, by Equations~\eqref{E:lower-bound-Im-zeta} and \eqref{E:intermediate-height-lift} 
the set $B(\gamma(1),D_4)\cup \gamma[0,1]$ is contained above the line $\Im \zeta = 3/4$. 
On the other hand, by Lemma~\ref{L:very-basic-estimates-on-IS}, $\Dom f \supset B(0,8/9)$, and hence,
\begin{equation*}
\ex \big ( \{\zeta\in \mathbb{C} : \Im \zeta\geq 0 \} \big ) \subset \Dom f \setminus \{0\}. 
%\forall \zeta \in \cc \text{ with } \Im \zeta \geq 0, \ex(\zeta)\in \Dom f.
\end{equation*} 
In particular,
\[\ex \Big(B_{\delta_1} \big(B(\gamma(1),D_4)\cup \gamma[0,1]\big)\Big) \subseteq \Dom f\setminus \{0\}.\]
This proves Part (1) of the proposition.   

As $D_4 \leq 1/4$, by Equation~\eqref{E:1/2-close-real-parts}, 
\[\diam (\Re (B(\gamma(1),D_4) \cup \gamma[0,1]))\leq 1/2+1/4=3/4,\]
which implies Part (3) of the proposition.  

As $\gamma$ is a straight line segment of uniformly bounded length, one may choose a uniform $\delta_2$ 
for Part (4) of the proposition. 

Recall that by Proposition~\ref{P:turning}, $k_f\leq \Bk''$. 
This implies that 
\[\p_f \cap \cup_{j=0}^{k_f-1} f\co{j}(S_f) 
\subset \Phi_f^{-1} \left (\{w\in \mathbb{C} : \Re w \in [0,1/2] \cup (\alpha^{-1} - \Bk -\Bk''-1, \alpha^{-1} - \Bk)\}\right).\]
On the other hand, 
\[\p_f \cap\cup_{j=k_f}^{k_f+ \lfloor  1/\alpha \rfloor -\Bk -1} f\co{j}(S_f)
\subset \Phi_f^{-1} \big (\{w\in \mathbb{C} : \Im w \geq -2 \}\big ).\]
Hence, by Equation~\eqref{empty-balls}, 
\[\Omega_0^0(f) \cap \Phi_f^{-1}(B(\vtet(t'), t'/2)) = \emptyset,  \quad 
\Omega_0^0(f)  \cap  f(\Phi_f^{-1}(B(\vtet(t'), t'/2))) = \emptyset.\] 
On the other hand, by Equation~\eqref{lift-of-big-balls}, 
\[\ex (B(\gamma(1), D_4)) \subset \ex (\chi_f (B(\vtet(t'), t'/2)))= \Phi_f^{-1}(B(\vtet(t'), t'/2))).\]
Hence, 
\[\ex (B(\gamma(1), D_4)) \cap \Omega_0^0(f)=\emptyset, \quad 
f( \ex (B(\gamma(1), D_4))) \cap \Omega_0^0(f)=\emptyset, \]
as desired in Part (2) of the proposition.

This finishes the proof of the proposition when $\alpha$ satisfies Equations~\eqref{E:alpha-condition-1}, 
\eqref{E:alpha-condition-3}, and \eqref{E:alpha-condition-4}, as well as $\zeta_0$ satisfies 
Equation~\eqref{E:lower-bound-Im-zeta}. 
Below we consider the remaining case. 

\medskip

By the assumption $\Im \zeta_0\leq \frac{1}{2\pi} \log \frac{1}{\alf}+E$, if any of the conditions in 
Equations~\eqref{E:alpha-condition-1}, \eqref{E:alpha-condition-3}, \eqref{E:alpha-condition-4}, and 
\eqref{E:lower-bound-Im-zeta} does not hold, there is a uniform constant $D_5\geq 1$ such that 
\[\Im \zeta_0\leq  D_5.\] 
Below we prove the position for such points $\zeta_0$ (while assuming that $\alpha \in (0, r_2]$). 

Recall the sector $S_f$ defined in Section~\ref{SS:renormalization-def}. 
Let us denote the connected component of $f^{-1}(S_f)$ which lies in $\p_f$ by $S'_f$. 
In other words, $S'_f= \Phi_f^{-1}(\Phi_f(S_f)-1)$. 
The set $S_f'$ might be contained in $\Omega_0^0(f)$, but this does not make any difference in the argument 
we present below. 

There is a constant $D_7>0$, independent of $\alpha$ and $f$, such that $\Omega_0^0(f) \cup S_f' \subset B(0,D_7)$. 
To see this, first note that there is an integer $n_f \geq 0$, uniformly bounded from above, such that 
$L_f^{-1}(\{\zeta\in \mathbb{C} \mid n_f+1/2 \leq \Re \zeta \leq x_f   \})$ is contained in $\Theta_\alpha(C_2)$, 
where $C_2$ is the constant in Lemma~\ref{cyl-cond}. 
Then, 
\[\bigcup_{i=k_f + n_f}^{k_f+ \lfloor 1/\alpha_n\rfloor -\Bk-2}  f\co{i}(S_f)   
\subseteq  \Phi_f^{-1}(\{\zeta \in \mathbb{C} \mid n_f+1/2 \leq \Re \zeta \leq x_f \})
\subset \tau_f (\Theta_\alpha(C_2)).\] 
The diameter of $\tau_f (\Theta_\alpha(C_2))$ is uniformly bounded from above, independent of $\alpha$ and $f$. 
On the other hand, for $f \in \QIS_0$ the sets $f\co{i}(S_f)$, for $i \geq 0$, are defined and compactly contained in $\Dom f$. 
Indeed,  according to \cite{IS06}, for every $f\in \QIS_0$, $S_f$ and all its forward and backward iterates are defined and
contained in $\Dom f$. 
Moreover, $f\co{i}(S_f)$, for $i\geq 0$, tend to $0$ in the attracting direction, and $f\co{i}(S_f)$ within the 
repelling Fatou coordinate, for $i \leq 0$, tend to $0$ in the repelling direction. 
For each $\alpha \in [0, r_2]$ and $f\in \QIS_\alpha$, the diameter of each $f\co{i}(S_f)$, for $0\leq i \leq k_f+n_f$, 
(let $n_f=\infty$ when $\alpha=0$), are finite. 
Similarly, the diameter of each $S_f'$ is also finite. 
Therefore, by the pre-compactness of the class of maps $\cup_{\alpha\in [0,r_2]} \QIS_\alpha$ and the continuous 
dependence of the Fatou coordinates on the maps, the diameters of these sets are uniformly bounded from above, 
independent of $\alpha$ and $f$. 
This proves the existence of the uniform constant $D_7$. 

By the pre-compactness of the class of maps $\cup_{\alpha\in [0,r_2]} \QIS_\alpha$, there is a constant $\delta>0$ 
such that 
\[B_{\delta}(\Omega^0_0(f) \cup S_f') \subset \Dom f.\]
See Equation~\eqref{well-contained} for further details. 

Let $\ex(\zeta_0)=z$. 
By the hypothesis of the proposition we have $z \in \Omega_0^0(f)$, and hence by the above paragraph, $|z| \leq D_7$. 
Moreover, because $\Im \zeta_0 \leq D_5$ here, $|z| \geq 4e^{-2\pi D_5}/27$ is uniformly bounded from below. 

Consider the smallest $r\geq 1$ such that $rz \in \partial (\Omega_0^0(f) \cup S'_f)$. 
That is, $r' z \in \Omega_0^0(f) \cup S'_f$ for all $r\in [1, r]$. 
Let $z'=rz$. 

Recall the set $\mathcal{C}_f^{-1}$ defined in Section~\ref{SS:renormalization-def}.  
We are looking for a small ball near $z'$ that is outside $\Omega_0^0(f)$ and is mapped outside $\Omega_0^0(f)$ by $f$.
However, points outside but near $\partial \mathcal{C}_f^{-1}$ may be mapped into $\Omega_0^0(f)$, due to the 
branched covering $f: \mathcal{C}_f^{-1}\to \mathcal{C}_f$. 
Also, if the topological interior of the set $S_f' \setminus \Omega_0^0(f)$ is not empty, points within this set are mapped 
into $S_f \subset \Omega_0^0(f)$. 
But, these are the only ways in which this issue occurs. 
Indeed, it follows from the definition of the sets $f\co{i}(S_f)$, $0\leq i \leq k_f+ \lfloor 1/\alpha \rfloor -\Bk-2$, 
and the way they are mapped to one another, that a given point $z_0 \in \partial \Omega_0^0(f)$ is mapped to a point on 
$\partial \Omega_0^0(f)$, unless either 
\begin{inparaenum}
\item[(a)]$z_0 \in \partial \mathcal{C}_f^{-1}$, or 
\item[(b)] both $z_0$ and $f(z_0)$ belong to the boundary of $S_f$.
\end{inparaenum}
Moreover, case (b) only occurs if $z_0$ belongs to the common boundaries of $S_f$ and $S_f'$. 
We may avoid case (b) by assuming that $z'$ is on the boundary of $\Omega_0^0(f) \cup S'_f$. 
While due to the issue arising in case (a) we need to analyze two separate cases presented below. 

Let us first assume that $z' \notin \partial \mathcal{C}_f$. 
There is a round ball $B(z'', \delta') \subset B_{\delta}(\Omega^0_0(f)) \subset \Dom f$ such that $|z'-z''| \leq \delta/2$, 
$B(z'', \delta') \cap \Omega_0^0(f)=\emptyset$, and $f(B(z'', \delta')) \cap \Omega_0^0(f)=\emptyset$. 
Indeed, by the pre-compactness of the class $\QIS$, and the continuous dependence of the sets $\mathcal{C}_f^{-i}$ on $f$, 
we may assume that $\delta'$ is uniformly bounded from below, independent of $\alpha$ and $f$. 
%We also assume that $\delta' \leq \frac{2}{27} e^{-2\pi D_5}$. 

Define the curve $\gamma'$ as the union of the line segment connecting $z$ to $z'$ and the line segment 
connecting $z'$ to $z''$. 
By definition, $\sup_{w,w'\in \gamma'} |\arg (w/w')|< \pi$. 
Moreover, $\gamma'$ is contained in $B(0, D_7 + \delta) \setminus \{0\}$, and is uniformly away from $0$. 

Define the curve $\gamma$ as the lift of $\gamma'$ under $\ex$ which starts at $\zeta_0$. 
Since $|z''|$ is uniformly bounded from above, $|(\ex^{-1})'(z'')|$ is uniformly bounded away from $0$. 
Combining with a bounded distortion argument (Theorem~\ref{T:Distortions}), 
we conclude that the lift of $B(z'',\delta')$ under $\ex$ contains a round ball about $\ex^{-1}(z'')$ whose size is 
uniformly bounded away from $0$, say by $r^*$. 
If necessary, we reduce the size of the ball to make it less than $1/4$. 
These readily imply Parts (1) and (2) of the proposition for some $\delta_1>0$ uniformly away from $0$. 
For Part (3) of the proposition we note that 
\[\diam \Re (B_{\delta_1}(B(\gamma(1), r^*)  \cup \gamma[0,1] )) \leq  2 \delta_1 + \frac{\pi}{2\pi} + r^*,\]
where the term $\pi/(2\pi)$ comes form the bound on the spiral of $\gamma'$ in the above paragraphs. 
Then, to make the above diameter less than $1-\delta_1$, we need to choose $\delta_1 \leq 3^{-1}(1/2-r^*)$. 

Finally since the diameter of $\gamma$ is uniformly bounded from above, the modulus in Part (4) is uniformly bounded 
from above. 
So we may choose a uniform $\delta_2$ in this case. 
This finishes the proof of the proposition in the case $z' \notin \partial \mathcal{C}_f$. 

In the remaining case $z' \in \partial \mathcal{C}_f$ we only need to slightly modify the above argument. 
That is, by Proposition~\ref{P:location-critical-piece}, there is a smooth curve 
$\gamma' \subset \Omega_0^0(f) \setminus B(0, 4 e^{-2\pi D_5}/27)$ connecting $z$ to 
$z' \in \partial \Omega_0^0(f) \setminus \mathcal{C}_f$ such that the length of $\gamma'$ is uniformly bounded from above 
and $\sup_{w,w'\in \gamma'} |\arg (w/w')| \leq C< 2\pi$. 
Now by the above argument, there is a ball $B(z'', \delta') \subset B_{\delta}(\Omega^0_0(f))$, 
whose size is uniformly bounded from below, $B(z'', \delta') \cap \Omega_0^0(f)=\emptyset$, and 
$f(B(z'', \delta')) \cap \Omega_0^0(f)=\emptyset$. 

Define the curve $\gamma''$ as the union of the curve $\gamma'$ and the line segment connecting $z'$ to $z''$. 
As in the above paragraphs, lifting $\gamma'' \cup B(z'', \delta')$ under $\ex$, we obtain the curve $\gamma$ as well as 
a round ball whose size is uniformly bounded away from $0$. 
Here, one must choose $r^* \leq (1 -C/(2\pi))/2$ and $\delta_1 \leq (1-C/(2\pi) - r^*)/3$.  
\end{proof}

%%%%%%%%%%%%%%%%%%%%%%%%%%%%%%%%%%%%%%%%%%%%%%%%%%%%
\subsection{Metric properties of the orbits in the renormalization tower}\label{sec:metric-properties}

\begin{proof}[Proof of Proposition~\ref{P:change of coord}]
By the pre-compactness of the class of maps $\cup_{\alpha\in [0, r_2] } \QIS_\alpha$, and the continuous dependence of 
the normalized Fatou coordinate on the map, the diameter of $\p_{\rr(f)}$ is uniformly bounded from above. 
%The map $\phi_f$ is univalent and defined on the simply connected domain $U$, see Equation~\eqref{U}. 
%Moreover, $\phi_f$ has an extension to a univalent map defined on $V$, see Theorem~\ref{Ino-Shi2}. 
%By the distortion theorem~\ref{T:Distortions}, we conclude that the diameter of $U_f$ is uniformly bounded from above. 
In particular, the absolute value of $w \in \p_{\rr (f)}$ is uniformly bounded from above. 

Let $\eta_{\rr(f)}: \p_{\rr (f)} \to \Phi_f(\p_f)$ be an inverse branch of $\ex$ that satisfies 
$\Re (\eta_{\rr(f)}(\p_{\rr f})) \subset [0, \hat{\Bk}+1]$ as in Equation~\eqref{WidthofLift}.
Let $\zeta:=\eta_{\rr(f)}(w)$. 
By the above paragraph, $\Im \zeta =\frac{-1}{2\pi}\log \frac{27|w|}{4}$ must be bounded from below by a uniform constant, 
say $M'$. 
Let $M$ be the constant produced by Proposition~\ref{P:nearly-translation-pre-coord} for $M'$. 

To prove the proposition, we consider two cases; small values of $\alpha$ and large values of $\alpha$. 
Recall the constant $\Bk$ and $\hat{\Bk}$ from Proposition~\ref{P:uniformly-bounded-width-spiral}.
First we assume that $\alf$ is small enough so that 
\[\alf\leq 1/(2\Bk+2\hat{\Bk}+2), \text{ and } 1/(4\alf) \geq \hat{\Bk}+2+M\log (1+1/\alf).\] 
In this case we set $\kappa(f):=\lfloor 1/(2\alf) \rfloor$. 
By the first condition on $\alpha$ above, we have 
\[0 <\Re (\eta_{\rr(f)}(\p_{\rr f}))+ \kappa(f) \leq \hat{\Bk}+1 + 1/(2\alpha) \leq 1/\alpha -\Bk.\] 
In particular, combining with Proposition~\ref{P:uniformly-bounded-width-spiral}, we conclude that 
$\eta_{\rr(f)}(\p_{\rr(f)})+\kappa(f) \subset \Phi_f(\p_f)$. 
This implies that 
\[f\co{\kappa(f)} \circ \psi_{\rr (f)} (\p_{\rr(f)}) = 
f\co{\kappa(f)} \circ  \Phi_f^{-1} \circ \eta_{\rr (f)} (\p_{\rr(f)})= 
\Phi_f^{-1} \big(\eta_{\rr (f)} (\p_{\rr(f)}) + \kappa(f)\big ) \subset \p_f.\]
This proves part (1) of the proposition in this case. 

On the other hand, one can see that by the above conditions on $\alf$, $\zeta+\kappa(f)$ belongs to the set
$[0, 1/\alf-\Bk]+\Bi[M', +\infty)$.  
Then, by Proposition~\ref{P:nearly-translation-pre-coord}, we have 
\begin{gather*} 
|L_f^{-1}(\zeta+\kappa(f))- (\zeta+\kappa(f))| \leq M \log(1+1/\alf).
\end{gather*}
Hence, 
\begin{gather}
\Im L_f^{-1}(\zeta+\kappa(f)) \geq \Im \zeta-M \log(1+1/\alf), \label{local-bound-1}\\
|\Re L_f^{-1}(\zeta+\kappa(f))- 1/(2\alf) | \leq \hat{\Bk}+2+ M \log(1+1/\alf))\leq 1/(4\alf). \label{local-bound-2} 
\end{gather}
Using Lemma~\ref{strip-image}-2 (with $r=1/4$) and the above inequalities, 
there is a constant $M_2$ depending only on $C_1$ and $M$, such that  
\begin{align*}
|f\co{\kappa(f)}(\psi_{\rr(f)}(w))|&=|\Phi_f^{-1} (\eta_{\rr f} (w)+\kappa(f))| \\
&=|\tau_f \circ L_f^{-1}(\eta_{\rr f}(w)+\kappa(f)))| \\
&=|\tau_f(L_f^{-1}(\zeta+\kappa(f)))| \\
& \leq 4C_1 e^{2\pi} \alf e^{-2\pi\alf \Im L_f^{-1}(\zeta + \kappa(f))} \\
& \leq 4C_1 e^{2\pi} \alf e^{-2\pi\alf (\Im \zeta -M\log(1+1/\alf))}
\leq M_2\cdot \alf |w|^{\alf}.
\end{align*}
This proves Part (2) of the proposition for small values of $\alpha$. 

Now we consider larger values of $\alpha$ that do not satisfy the above conditions. 
Here we set $\kappa(f)=0$. 
Then, $f\co{\kappa(f)} \circ \psi_{\rr (f)} (\p_{\rr(f)})= \psi_{\rr (f)} (\p_{\rr(f)}) \subset \p_f$, by 
Equation~\eqref{E:psi-injection}. 
This proves Part~(1) of the proposition in this case. 

Since $\zeta \in [0, 1/\alf-\Bk]+\Bi[M', +\infty)$, as in the above argument, we must have 
$\Im L_f^{-1}(\zeta) \geq \Im \zeta-M \log(1+1/\alf)$. 
%Equation~\eqref{local-bound-1} holds here as well.
(Here we do not need the bound in \eqref{local-bound-2}.)  
Then, by an elementary estimate on $\tau_h$, there is a uniform constant $M_2$ such that 
$|\tau_f(L_f^{-1}(\zeta))|\leq M_2\cdot |w|^\alf$.
That is, $|\psi_{\rr(f)}(w)| \leq M_2 |w|^{\alpha}$. 
However, since $\alf$ is bounded from below here, one may adjust $M_2$ to accommodate the parameter $\alf$ in the formula. 
This finishes the proof of Part (2) in this case. 
\end{proof}

\begin{proof}[Proof of Proposition~\ref{P:height-control-lem}]
By Proposition~\ref{P:nearly-translation-pre-coord}, with $M'=0$, we find a constant $M$ (independent of $n$) 
such that for all $\zeta\in [0, x_h]+\Bi [0, +\infty)$ we have 
\[\Im L_{n+1}^{-1}(\zeta)\geq \Im \zeta - M \log (1+1/\alf_{n+1}).\]
Choose $D_1>0$ such that for all $\alf \in (0,1)$, we have 
\[\frac{D_1}{\alf} - M \log (1+\frac{1}{\alf}) \geq \frac{1}{4\alf}.\]
If $\Im \zeta_{n+1}\geq D_1/\alf_{n+1}$, the above equations guarantee that  
\[\Im L_{n+1}^{-1}(\zeta_{n+1})\geq \frac{1}{4\alf_{n+1}}.\]
This implies that $L_{n+1}^{-1}(\zeta_{n+1})\in \Theta_{\alf_{n+1}}(\tfrac{1}{4\alf_{n+1}})$. 
By Lemma~\ref{strip-image}-2, with $r=1/4$,   
\begin{align*}
|\tau_{n+1}(L_{n+1}^{-1}(\zeta_{n+1}))|
&\leq 4C_1 e^{2\pi} \alf_{n+1} e^{-2\pi \alf_{n+1} (\Im \zeta_{n+1}-M \log (1+1/\alf_{n+1}))}\\
&\leq C  \alf_{n+1}e^{-2\pi\alf_{n+1}\Im \zeta_{n+1}},
\end{align*} 
for some constant $C$ that depends only on $C_1$ and $M$. 
Here we have used that $\alf_{n+1}\log (1+1/\alf_{n+1})$ is uniformly bounded from above independent of $\alf_{n+1}\in (0,1)$. 
Recall that $\Phi_{n+1}(w_{n+1})=\zeta_{n+1}$, and $\Phi_{n+1}^{-1}= \tau_{n+1} \circ L_{n+1}^{-1}$. 
Hence, we have shown that $|w_{n+1}| \leq C\alf_{n+1}e^{-2\pi\alf_{n+1}\Im \zeta_{n+1}}$.

By Proposition~\ref{P:turning} and Equation~\eqref{sign-property}, $w_{n+1}$ is mapped to $z_{n+1}$ in a 
uniformly bounded number of iterates of $f_{n+1}$. 
The map $f_{n+1}$ is of the form 
\[z \mapsto P \circ \phi_{n+1}^{-1}(e^{2\pi \Bi \alpha_{n+1}} \cdot z) : (e^{-2\pi \Bi \alpha_{n+1}} \cdot \phi_{n+1}(U))  \to \cc,\]  
with $|f_{n+1}'(0)|=1$ and $\phi_{n+1}$ has univalent extension over the larger domain $V$ (see Theorem~\ref{Ino-Shi2}). 
This implies that, there exists a uniform constant $C'$ such that $|z_{n+1}|\leq C' |w_{n+1}|$. 

Recall that $\ex (\zeta_n)= z_{n+1}$. 
Combining the above two paragraphs, we have 
\begin{equation*}
\frac{4}{27}e^{-2\pi \Im \zeta_n} =|\frac{-4}{27} e^{-2\pi \Bi \overline{\zeta_n}}|
=|z_{n+1}| \leq C'C \alf_{n+1}e^{-2\pi\alf_{n+1}\Im \zeta_{n+1}}.    
\end{equation*}
Multiplying the above equation by 27/4, and then taking $\log$, we obtain
\[2\pi \alpha_{n+1} \Im \zeta_{n+1} \leq \log (\frac{27 CC'}{4}) + \log  \alpha_{n+1} + 2\pi \Im \zeta_n.\]
Then dividing through by $2\pi \alpha_{n+1}$ we obtain, 
\[\Im \zeta_{n+1} \leq \frac{1}{2\pi \alpha_{n+1}} \log (\frac{27 CC'}{4}) + \frac{1}{2\pi \alpha_{n+1}} \log  \alpha_{n+1} + 
\frac{1}{\alpha_{n+1}} \Im \zeta_n.\]
This is the desired inequality in the proposition when we define the constant $D_2=\frac{1}{2\pi} \log (\frac{27 CC'}{4})$.
\end{proof}

\begin{proof}[Proof of Proposition~\ref{P:borrowed-lem}]
First we prove that for every $D>0$ there exists $E>0$ such that if 
$\Im \zeta_{n+1}\leq D/\alf_{n+1}$ then $\Im \zeta_n\leq \tfrac{1}{2\pi} \log \tfrac{1}{\alf_{n+1}}+E$.

The map $f_{n+2}$ has the form 
\[z \mapsto P \circ \phi_{n+2}^{-1}(e^{2\pi\alf_{n+2}\Bi} \cdot z): (e^{-2\pi\alf_{n+2}\Bi} \cdot \phi_{n+2}(U)) \to \cc,\] 
with $|f_{n+2}'(0)|=1$. 
Recall that by Theorem~\ref{Ino-Shi2} the map $\phi_{n+2}$ has univalent extension onto the larger domain $V$. 
By the distortion theorem~\ref{T:Distortions}, this implies that $\Dom f_{n+2}=\phi_{n+2}(U)\cdot e^{-2\pi\alf_{n+2}\Bi}$ 
has a uniformly bounded diameter in $\cc$. 
Then, as $\ex(\zeta_{n+1})\in \Dom f_{n+2}$, $\Im \zeta_{n+1}$ must be uniformly bounded from below by a constant 
$M'$ independent of $n$. 
Using Proposition~\ref{P:nearly-translation-pre-coord}, with $M'$, we obtain a constant $M$, 
independent of $n$, such that  
$\Im L_{n+1}^{-1}(\zeta_{n+1}) \leq \Im \zeta_{n+1} + M \log (1+1/\alf_{n+1})$.  
For points $\zeta_{n+1}$ with $\Im \zeta_{n+1}\leq D/\alf_{n+1}$, we obtain 
\[\Im L_{n+1}^{-1}(\zeta_{n+1}) \leq D/\alf_{n+1} + M \log (1+1/\alf_{n+1}).\]
By an explicit estimate on the covering map $\tau_{f_{n+1}}$ given by the formula in Equation~\eqref{E:covering-formula},  
there is  a constant $C$ independent of $n$ such that 
\[|w_{n+1}|=|\tau_{f_{n+1}}(L_{n+1}^{-1}(\zeta_{n+1}))| \geq C \alf_{n+1}.\] 
The point $w_{n+1}$ is mapped to $z_{n+1}$ by a uniformly bounded number of iterates of $f_{n+1}$. 
Moreover, if $z_{n+1}$ is close to $0$, then $w_{n+1}$ must be also close to $0$. 
That is due to the covering structure of $P$ on $U$, which covers a neighborhood of $0$ only once. 
These imply that there is a constant $C'$ independent of $n$ such that $|z_{n+1}|\geq C' \alf_{n+1}$.  
Then, as $\zeta_n$ is mapped to $z_{n+1}$ by $\ex$, we obtain 
\[\Im \zeta_n \leq \frac{-1}{2\pi} \log (\frac{27 C'}{4}) + \frac{1}{2\pi} \log \frac{1}{\alpha_{n+1}}.\]
This finishes the proof of the claim by introducing $E=\frac{-1}{2\pi} \log (\frac{27 C'}{4})$. 

It is proved in Lemma~3.11 in \cite{Ch10-II} that there exists a constant $D>0$ such that given any 
$z\in \cap_{n=0}^\infty \Omega_0^n \setminus \overline{\Delta(f)}$ there are infinitely many integers $m$ 
with $\Im \zeta_m\leq D/\alf_m$. 
(Indeed, the statement of the lemma in that paper concerns $z \in \mathcal{PC}(f) \setminus \overline{\Delta(f)}$, however, 
the proof is written for $\cap_{n=0}^\infty \Omega_0^n \setminus \overline{\Delta(f)}$.) 
Combining this with the statement in the first paragraph, we conclude that there are infinitely many levels $m$ with 
$\Im \zeta_{m-1} \leq \frac{1}{2\pi} \log \frac{1}{\alpha_{m}} + E$. 
\end{proof}

\begin{proof}[Proof of Proposition~\ref{P:lift-of-integers}]
%This is a special case of the proof of Proposition~\ref{P:r-ball}, so we only briefly present the key steps. 
Fix $\zeta \in \ex^{-1}(\Omega_{n+1}^0)$ satisfying $\Im \zeta \leq \frac{1}{2\pi} \log \alpha_{n+1}^{-1} +E$. 
Since $\Omega_{n+1}^0$ has a uniformly bounded diameter, independent of $n$, there is a uniform constant 
$C_1$ such that $\Im \zeta \geq C_1$. 
See, for instance, the proof of Proposition~\ref{P:borrowed-lem} for further details. 

First assume that $\alpha$ is small enough so that $2 \leq 1/(2\alf_{n+1})\leq 1/\alf_{n+1}-\Bk$. 
For an inverse branch of $\ex$, denoted by $\hat{\eta}_{n+1}$, we may consider the continuous curve
$\Upsilon_{n+1}(t):= \hat{\eta}_{n+1}\circ \Phi_{n+1}^{-1}(t)$, for $1\leq t \leq \tfrac{1}{2\alf_{n+1}}$. 
By the same argument as in the proof of Sublemma~\ref{long-lifts}, there exists a uniform constant $C_2$ 
such that for every choice of $\hat{\eta}_{n+1}$, we have  
\[\frac{1}{2\pi}\log \frac{1}{\alf_{n+1}}- C_2 \leq \Im \Upsilon_{n+1}(\frac{1}{2\alf_{n+1}}) .\]
Moreover, 
\[\Im \Upsilon_{n+1}(1) = \Im \hat{\eta}_{n+1} \circ \Phi_{n+1}^{-1}(1) 
= \Im \hat{\eta}_{n+1} (-4/27)= 0.\]

On the other hand, by Sublemma~\ref{nice-derivative}, $|\Upsilon_{n+1}'(t)|$, for $1 \leq t \leq \tfrac{1}{2\alf_{n+1}}$, 
is uniformly bounded from above, say by $C_3$. 
Hence, for every integer $i\in [2, \tfrac{1}{2\alf_{n+1}}]$, the Euclidean distance $d(\Upsilon_{n+1}(i-1), \Upsilon_{n+1}(i))$ is 
at most $C_3$.
It follows that there exists a choice of the inverse branch $\hat{\eta}_{n+1}$ and an integer $i\in [1, \tfrac{1}{2\alf_{n+1}}]$
such that $\zeta':=\Upsilon_{n+1}(i)$ satisfies the desired inequalities in the proposition.  

Now let us assume that $\alpha$ is bounded from below, that is, $1/(2\alf_{n+1}) > 1/\alf_{n+1}-\Bk$. 
(We are still assuming that $\alpha \leq r_2$, so there may not be any such $\alpha$.)
Then, by the hypothesis of the proposition, $\Im \zeta$ is uniformly bounded from above. 
As in the second paragraph, it is also uniformly bounded from below. 
Then, there is an element $\zeta'$ in the set $\ex^{-1} \circ \Phi_{n+1}^{-1}(1)= \mathbb{Z}$ that satisfies the desired 
inequalities in the proposition. 
\end{proof} 

\subsection*{Acknowledgment}
I would like to thank Hiroyuki Inou, Misha Lyubich, Mitsuhiro Shishikura, and Saeed Zakeri 
for useful discussions while carrying out this research. 
I am grateful to the referee(s) for detailed comments on the presentation of the paper, and on 
the proofs. 

\bibliographystyle{smfalpha}
%\bibliography{Data}

\bibliography{Main.bbl}
\end{document}